\numberwithin{equation}{section}
\newtheorem{thm}{Theorem}[section]
\newtheorem{lem}[thm]{Lemma}
\newtheorem{prop}[thm]{Proposition}
\newtheorem{define}[thm]{Definition}
\newtheorem{definition}[thm]{Definition}
\newtheorem{notation}[thm]{Notation}
\newtheorem{example}[thm]{Example}
\newtheorem{remark}[thm]{Remark}
\newcommand{\R}{\mathbb{R}}
\numberwithin{equation}{section}
\DeclareMathOperator{\conv}{Conv}
\DeclareMathOperator{\vol}{Vol}
\DeclareMathOperator{\Sym}{Sym}
\newcommand{\av}[1]{\left|{#1}\right|}
\newcommand{\ip}[2]{\left\langle{{#1}},{{#2}}\right\rangle}
\newcommand{\norm}[1]{\left\|{#1}\right\|}
\newcommand{\be}{\begin{enumerate}}
\newcommand{\bi}{\begin{itemize}}
\newcommand{\ee}{\end{enumerate}}
\newcommand{\ei}{\end{itemize}}
\newcommand{\ii}{\item}
\renewcommand{\P}{\mathbb{P}}
\newcommand{\E}{\mathbb{E}}
\renewcommand{\phi}{\varphi}
\newcommand{\psync}{\mathcal{P}_{{{\mathsf{sync}}}}}
\newcommand{\pfsync}{\mathcal{P}^{(F)}_{{{\mathsf{sync}}}}}
\newcommand{\bdy}{\partial}
\newcommand{\mc}[1]{\mathcal{#1}}
\newcommand{\rcs}[1]{R^{\mathsf{CS}}_{#1}}
\newcommand{\rdb}[1]{R^{\mathsf{DB}}_{#1}}
\newcommand{\ics}[1]{\inscr^{\mathsf{CS}}_{#1}}
\newcommand{\idb}[1]{\inscr^{\mathsf{DB}}_{#1}}
\newcommand{\ccs}[1]{\circscr^{\mathsf{CS}}_{#1}}
\newcommand{\cdb}[1]{\circscr^{\mathsf{DB}}_{#1}}
\newcommand{\inscr}{\mathcal{I}}
\newcommand{\circscr}{\mathcal{C}}
\newcommand{\circnorm}[2]{\left\|{#1}\right\|_{\circscr({#2})}}
\newcommand{\inscnorm}[2]{\left\|{#1}\right\|_{\inscr({#2})}}
\newcommand{\ymax}{y_{\mathsf{max}}}
\newcommand{\ymin}{y_{\mathsf{min}}}
\newcommand{\ymaxx}[1]{y_{\mathsf{max},{#1}}}
\newcommand{\yminn}[1]{y_{\mathsf{min},{#1}}}
\newcommand{\proj}{{\mathsf{P}^{(N)}}}
\newcommand{\snorm}[1]{\left\|{#1}\right\|_{\mathsf{spr}}}
\newcommand{\msamp}{{M_{\mathsf{samp}}}}
\newcommand{\tncn}{\tau_N\mc{Q}_N}
\begin{document}

\title{Synchronization conditions in the Kuramoto model and their relationship to seminorms}
\date{\today}

\author{Jared C. Bronski \\ University of Illinois \and Thomas E. Carty
  \\ Bradley University \and
Lee DeVille \\
  University of Illinois }

\maketitle
\begin{abstract}
  In this paper we address two questions about the synchronization of
  coupled oscillators in the Kuramoto model with all-to-all coupling. In the first part we use
  some classical results in convex geometry to prove bounds on the size of the
  frequency set supporting the existence of stable, phase locked
  solutions and show that the set of such frequencies can be expressed by a seminorm which we call the Kuramoto norm. In the second part we use some ideas from extreme order
  statistics to compute upper and lower bounds on the probability of
  synchronization for very general frequency distributions. We do so
  by computing exactly the limiting extreme value distribution of a
  quantity that is equivalent to the Kuramoto norm.
\end{abstract}

{\bf Keywords:} Kuramoto model, convex analysis, permutahedron, extreme-value statistics

{\bf AMS subject classifications:} 34C15, 34D06, 52A20, 60F17\\

\section{Introduction}

In this paper we consider the Kuramoto model of coupled oscillators with homogeneous coupling, i.e. the system of equations typically posed in the form
\begin{equation}\label{eq:K1}
  \frac{d}{dt}\theta_i =  \omega_i + \frac K N \sum_{j=1}^N \sin(\theta_j - \theta_i).
\end{equation}

Since the model's inception more than 40 years ago, it has been found to be  useful in a wide variety of practical applications, including   neuronal networks, Josephson junctions and laser arrays, chemical oscillators, charge density waves, control theory, and electric power networks.  We direct the interested reader to the following survey papers~\cite{S, Acebron.etal.05, Dorfler.Bullo.14, rodrigues2016kuramoto}.

The vector $\omega = (\omega_1,\dots,\omega_N)$ is called the frequency vector.  The topic of interest in this paper is the geometry of the set
of frequency vectors for which~\eqref{eq:K1} supports stable completely phase-locked solutions.  By rescaling the frequency vector we can set the coupling coefficient to unity, and so the equation we study in this paper is
\begin{equation}\label{eq:K}
   \frac{d}{dt}\theta_i =  \omega_i +  \sum_{j=1}^N \sin(\theta_j - \theta_i).
\end{equation}

There has been a great deal of interest in developing both necessary and sufficient analytical conditions for the existence of a stable phase-locked state in this and closely related systems~\cite{Peskin.75, Kuramoto.75, Sastry.Varaiya.80, Sastry.Varaiya.81, Ermentrout.1985,S, Kuramoto.91, Kuramoto.book,PRK.book, C,  CD, MS2, HaHaKim, Chopra.Spong.2009, VM1, VM2, Pecora, AeyelsRogge, deSmetAeyels, Cows, SM1, Acebron.etal.05, Mirollo.Strogatz.05,  ashwin2006extreme, Wiley.Strogatz.Girvan.06, Abrams.Mirollo.Strogatz.Wiley.08, Arenas.etal.08, Dorfler.Bullo.2011, Dorfler.Bullo.12, Dorfler.Chertkov.Bullo.13, Dorfler.Bullo.14, rodrigues2016kuramoto, Bronski.DeVille.Ferguson.16, Delabays.Coletta.Jacquod.16, Delabays.Coletta.Jacquod.17, troy2017phaselocked, Bronski.Ferguson.2018,Ferg,Ferg.18}.  We are inspired here by two particular prior results in the literature. The first is a sufficient condition for phase locking due to D\"{o}rfler and Bullo\cite{Dorfler.Bullo.2011} that says that (in this scaling)
a sufficient condition for full phase-locking is that
\begin{equation}
\max_{i,j} (\omega_i-\omega_j)< N.
\label{eqn:DB}
\end{equation}
The second result is due to Chopra and Spong\cite{Chopra.Spong.2009} , and says that a necessary condition for full phase-locking is that
\begin{equation}
\max_{i,j} (\omega_i-\omega_j) <\frac{1}{8\sqrt{2}} \left(\sqrt{32 + (N-2)^2} + 3 (N-2)\right)\sqrt{16 + (N-2) \sqrt{32 + (N-2)^2} - (N-2)^2}.
\label{eqn:CS}
\end{equation}
\begin{remark}
We note that $\max_{i,j}(\omega_i-\omega_j) = \max_{i,j}\av{\omega_i-\omega_j}$, and so we can think of the quantities in the last two equations as  a kind of mean-adjusted version of the $L_\infty$ norm.  However, as the reader will see below, it is useful to write this without the absolute values when we interpret the geometry of these stable sets.
\end{remark}

In this paper we use some ideas from convex geometry to
better understand the geometry of the set of frequencies supporting stable phase-locking.
The basic observation is as follows: suppose that we have a bounded
convex region $\Omega$ along with $R$, a set of points on the boundary
$\partial \Omega$. From such a collection of points we can construct
two polytopes. The first polytope, which is inscribed in $\Omega$ and which we denote $P_R$, is the convex hull of the points in
$R$. Since the points in $R$ lie on the boundary of a convex region this is a
polytope with vertices given by the points in $R$, and this
polytope is contained in $\Omega$.   The
second polytope, which circumscribes $\Omega$ and which we denote $Q_R$,  is constructed by taking the intersection over all
points in ${\bf x}\in R$ of the supporting  half-space to
$\Omega$ at ${\bf x}\in R$, and thus this polytope contains $\Omega$.

An example of these constructions is illustrated in Figure \ref{fig:BasicIdea} (a--c). Figure  \ref{fig:BasicIdea} (a)
depicts a convex region $\Omega$ together with a collection of points
$R$ lying on the boundary of the region. Figure  \ref{fig:BasicIdea}
(b) shows $P_R$ and Figure  \ref{fig:BasicIdea} (c)
depicts $Q_R$ --- note that $P_R$ is inscribed inside $\Omega$ and $Q_R$ circumscribes $\Omega$.

\begin{figure}[!tbp]
\centering
\hspace{-4mm}
  \begin{minipage}[b]{0.3\textwidth}
    \centering
    \includegraphics[width=\textwidth]{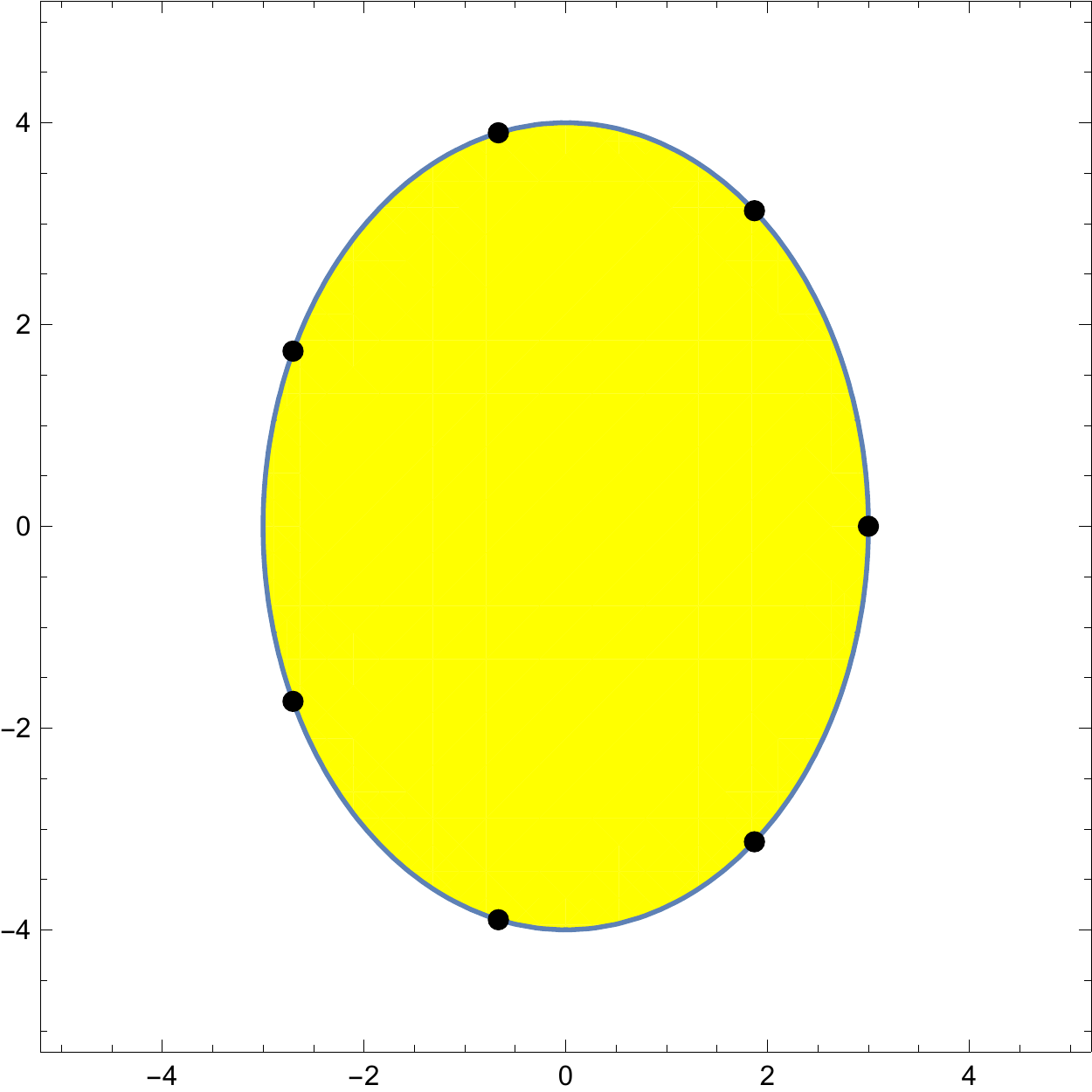}\\
    (a) Convex Region $\Omega$ and points $R\subseteq \bdy \Omega$.
  \end{minipage}
  \begin{minipage}[b]{0.3\textwidth}
     \centering
    \includegraphics[width=\textwidth]{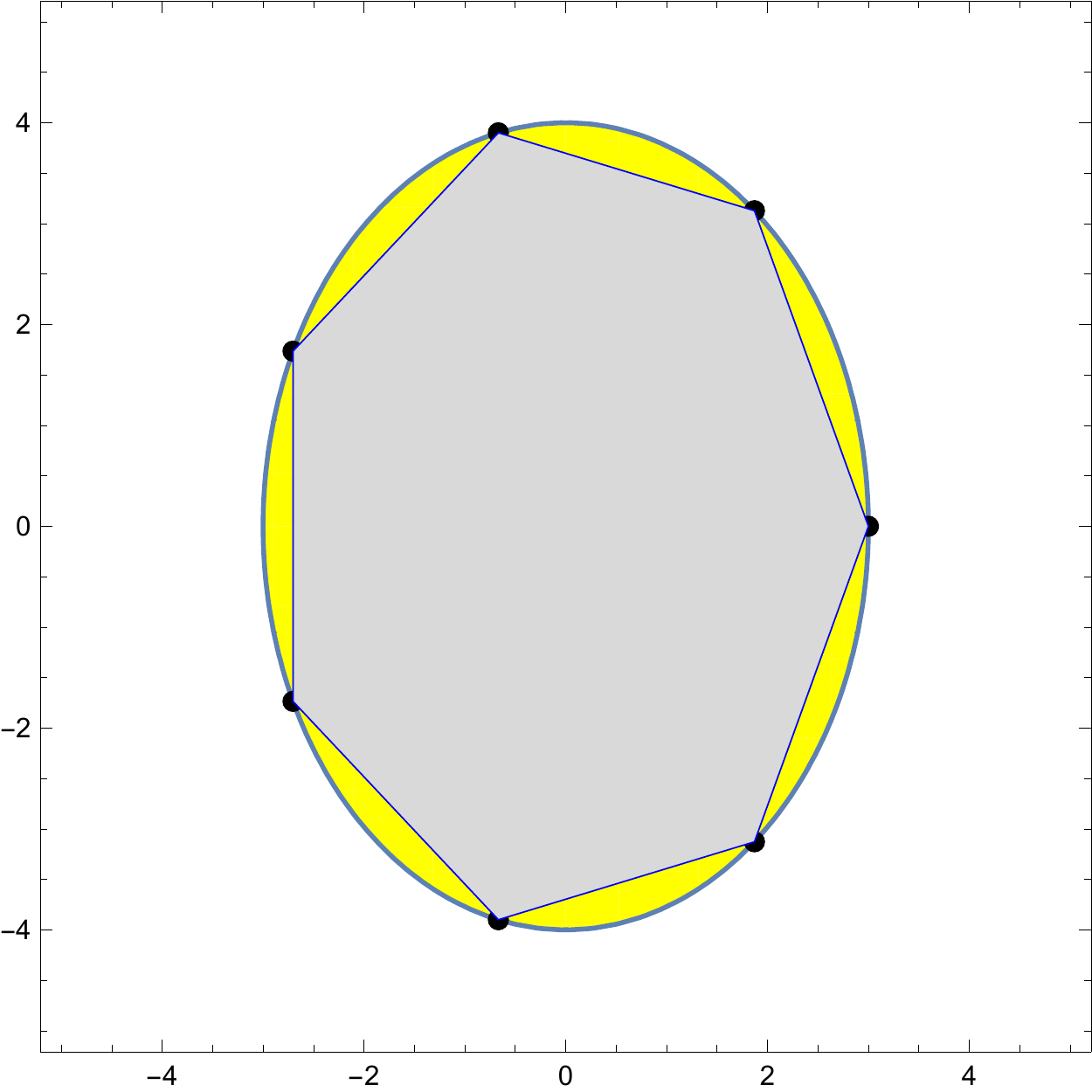}\\
    (b) Convex Region $\Omega$ and inscribed polytope $\mc{I}_R$.
  \end{minipage}
  \begin{minipage}[b]{0.3\textwidth}
     \centering
    \includegraphics[width=\textwidth]{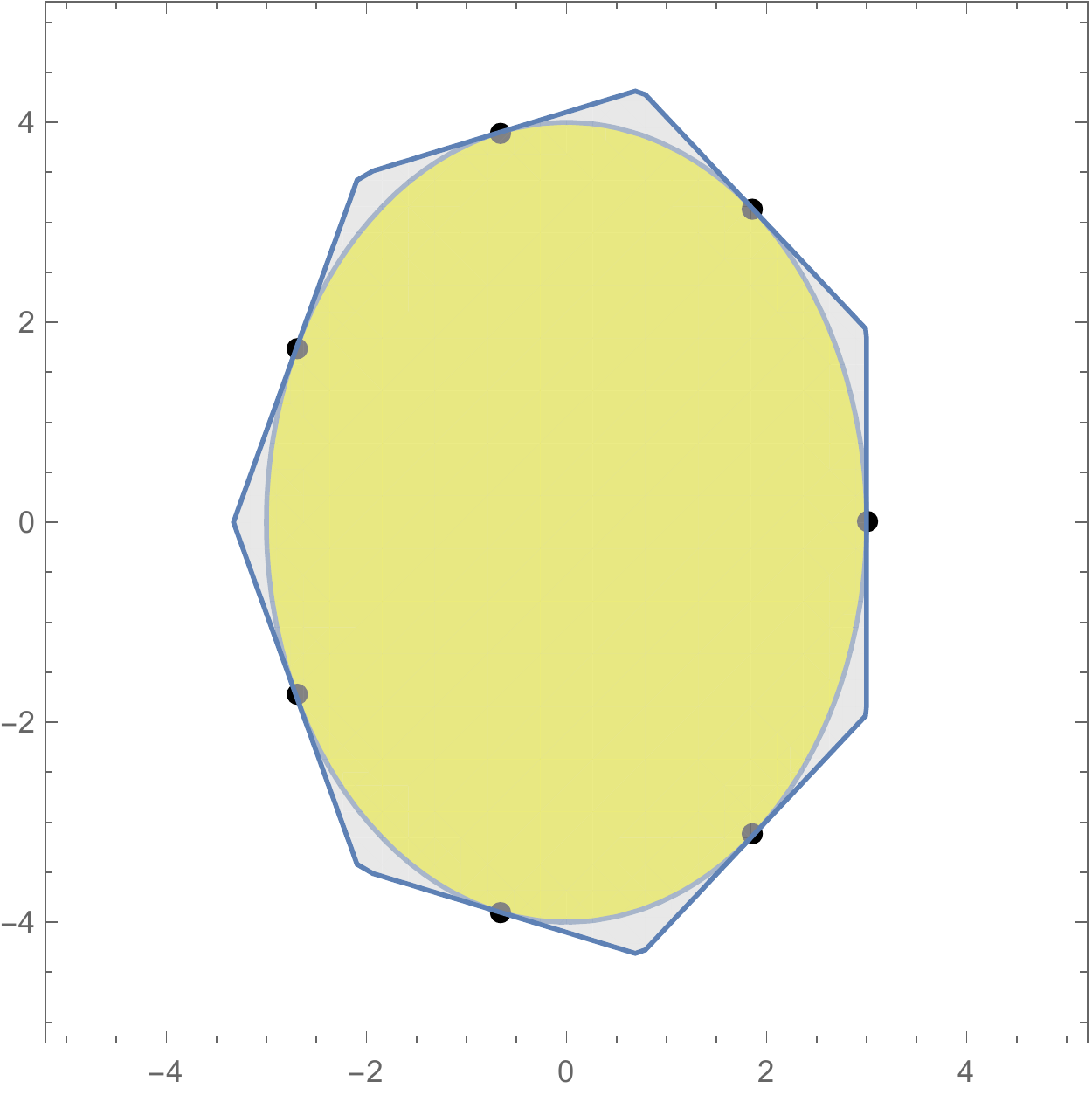}\\
    (c) Convex Region $\Omega$ and circumscribed polytope $\mc{C}_R$.
  \end{minipage}
\label{fig:BasicIdea}
\caption{A convex Region $\Omega$ and set of boundary points $R$
  together with the corresponding inscribed and circumscribed polytopes.  Note that these can be thought of as necessary and sufficient conditions, respectively. }
\end{figure}

We will give constructions for several different sets of points $R$ together with the polytopes $P_R,Q_R$, with each collection of points implying a sufficient $(P_R$) and necessary ($Q_R$) condition for the existence of a phase-locked solution. We also show that these conditions can be expressed in terms of some norm of the frequency vector $\omega$. We want to stress an important point here:  the description of a polytope in terms of a norm can have significant advantages over its combinatorial description; in particular, it is in many cases much more computationally efficient to check whether it contains a particular vector.

The D\"{o}rfler--Bullo sufficient condition (\ref{eqn:DB}) and the Chopra--Spong necessary condition (\ref{eqn:CS}) each correspond to different choices of point sets $R,$ and so our construction gives two new conditions, a necessary condition which is dual to the D\"{o}rfler--Bullo sufficient condition, and a sufficient condition that is dual to the Chopra--Spong necessary condition. These conditions can each be expressed as some norm of the frequency vector $\omega$.

We also show how to combine two  sufficient (resp. necessary) conditions to get a new condition that is
strictly better in the sense that it contains both sufficient conditions (resp. is contained in the intersection of both necessary conditions).
This procedure allows us to establish new conditions that are better than any existing in the literature, in two senses:  we first show that any of the existing conditions can be ``dualized'' in a natural way, and then we show that any two existing bounds can be combined to give conditions strictly stronger than each of them.

\section{Convex geometry and the size of the stable phase-locked region}

\subsection{Background and Previous Results}
Throughout this paper we consider the Kuramoto model mentioned in the introduction:
\begin{equation}\tag{\ref{eq:K}}
\frac{d\theta_i}{dt} =  \omega_i + \sum_{j=1}^N \sin(\theta_j-\theta_i),\quad i=1,\dots, N.
\end{equation}

\begin{define}
  Let $\R^N_0 = \{x\in \R^N:\sum_{i=1}^n x_i = 0\}$.  Then $\mc D_N = \{\omega\in \R^N_0: ~\eqref{eq:K} \mbox{ has a stable fixed point}\}$.
\end{define}

The study of $\mc D_N$ is the central goal of this paper.    Due to the antisymmetry of the nonlinear term, the sum $\sum_{i=1}^n \theta_i$ precesses around the unit circle with velocity $\overline\omega = \sum_{i=1}^N \omega_i$.  We can always work in the co-rotating frame (i.e. shift by average frequency $\overline\omega/N$) and assume without loss of generality that $\overline\omega=0$.  Conversely, if $\overline\omega\neq0$ then~\eqref{eq:K} will not have a fixed point, but it can have a stable configuration that precesses around the circle with rate $\overline\omega$.

In previous work the following lemma was established:

\begin{lem}\label{lem:stability}
A stationary configuration
of oscillators is stable if and only if the following two conditions
are met:
\begin{enumerate}
\item The  quantities $\kappa_j := \sum_i\cos(\theta_j-\theta_i) >0$ for all $j$;
\item The quantity $\tau := \sum_j\left({\sum_i \cos(\theta_j-\theta_i)}\right)^{-1}<2$.
\end{enumerate}
If these conditions are met the configuration is orbitally stable, with a
single zero eigenvalue arising from the rotational invariance and $N-1$
eigenvalues which are strictly negative.
\end{lem}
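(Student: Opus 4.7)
The plan is to linearize the Kuramoto dynamics at a stationary configuration and read off the stability criteria from the spectrum of the Jacobian $J$.  I would first compute that $J$ is the symmetric $N\times N$ matrix with off-diagonal entries $J_{ij}=\cos(\theta_j-\theta_i)$ and diagonal entries $J_{ii}=-\sum_{j\neq i}\cos(\theta_j-\theta_i)=1-\kappa_i$, and then write $J=C-K$, where $C_{ij}=\cos(\theta_j-\theta_i)$ (so $C_{ii}=1$) and $K$ is the diagonal matrix of the values $\kappa_1,\ldots,\kappa_N$.  Since the row sums of $C$ are precisely the $\kappa_i$, one checks that $J\mathbf{1}=C\mathbf{1}-K\mathbf{1}=0$, recovering the zero eigenvalue coming from rotational invariance.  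The remaining task is to show that $-J$ is positive semidefinite with a one-dimensional kernel exactly when the two stated conditions hold.

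The decisive structural observation is that $C$ has rank at most $2$: using $\cos(\theta_j-\theta_i)=\cos\theta_i\cos\theta_j+\sin\theta_i\sin\theta_j$, one has $C=\mathbf{c}\mathbf{c}^{T}+\mathbf{s}\mathbf{s}^{T}$ with $\mathbf{c}_i=\cos\theta_i$ and $\mathbf{s}_i=\sin\theta_i$.  Assuming $\kappa_i>0$ for all $i$, the matrix $K$ is positive definite and one may factor
\[
-J \;=\; K^{1/2}\bigl(I-K^{-1/2}CK^{-1/2}\bigr)K^{1/2}.
\]
By Sylvester's law of inertia, the eigenvalue signs of $-J$ match those of the inner symmetric matrix $I-K^{-1/2}CK^{-1/2}$, reducing the problem to showing that $K^{-1/2}CK^{-1/2}$ has spectrum in $(-\infty,1]$ with eigenvalue $1$ of multiplicity exactly one.

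Now $K^{-1/2}CK^{-1/2}$ is similar to $K^{-1}C$ (hence shares eigenvalues), and the latter inherits rank at most $2$ from $C$, so it has at most two nonzero eigenvalues.  The vector $\mathbf{1}$ is an eigenvector with eigenvalue $1$ since $K^{-1}C\mathbf{1}=K^{-1}\vec\kappa=\mathbf{1}$.  The trace identity $\Tr(K^{-1}C)=\sum_i C_{ii}/\kappa_i=\sum_i 1/\kappa_i=\tau$ then pins down the second nonzero eigenvalue as $\tau-1$.  Hence $I-K^{-1/2}CK^{-1/2}$ has spectrum $\{0,\,2-\tau,\,1,\ldots,1\}$, which has the desired non-negativity and multiplicity profile precisely when $\tau<2$.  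This establishes the sufficiency of the two conditions, together with the strict negativity and multiplicity claims on the remaining $N-1$ eigenvalues.

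For necessity of $\kappa_i>0$, if $\kappa_i\leq0$ for some $i$ then $J_{ii}=1-\kappa_i\geq1$; the vector $v=e_i-\tfrac{1}{N}\mathbf{1}$ is orthogonal to $\mathbf{1}$, and using $J\mathbf{1}=0$ one computes $v^{T}Jv=J_{ii}>0$, giving a positive eigenvalue on the complement of the rotational direction.  Necessity of $\tau<2$ (given $\kappa_i>0$) is obtained by running the inertia argument in reverse: $\tau\geq 2$ forces either a second zero or a strictly negative eigenvalue of $-J$.  I expect the main technical hurdle to be the clean rank-$2$ reduction that identifies the spectrum of $K^{-1}C$ with $\{1,\tau-1\}$ via the trace; once this is in hand the rest is bookkeeping, and the passage from linear spectrum to orbital stability on the $S^{1}$-group orbit follows from standard normally hyperbolic manifold arguments.
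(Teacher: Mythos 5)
Your argument is correct and, as far as this paper is concerned, it is genuinely \emph{new}: the paper does not prove Lemma~\ref{lem:stability} itself but cites it as Lemma~2.4 of Bronski--DeVille--Park~\cite{BDP}, and the conclusions section remarks that the derivation there proceeds via a Courant minimax argument exploiting the symmetry of the Jacobian. You instead exploit symmetry through a congruence: writing $J=C-K$ with $C=\mathbf{c}\mathbf{c}^{T}+\mathbf{s}\mathbf{s}^{T}$ of rank at most two, factoring $-J=K^{1/2}\bigl(I-K^{-1/2}CK^{-1/2}\bigr)K^{1/2}$ once $\kappa_i>0$, invoking Sylvester's law of inertia, and then nailing the full spectrum of $K^{-1}C$ as $\{1,\tau-1,0,\ldots,0\}$ by combining the known eigenvector $\mathbf{1}$ with the trace identity $\Tr(K^{-1}C)=\tau$. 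This buys you the entire eigenvalue list in closed form (hence the exact multiplicity profile claimed in the lemma) rather than just inequalities on a few extremal eigenvalues, and the necessity direction for $\kappa_i>0$ (via $v^{T}Jv=J_{ii}=1-\kappa_i$ with $v=e_i-\tfrac1N\mathbf{1}\perp\mathbf{1}$) is elementary. Two small points worth tightening if you write this up: (i) note explicitly that $K^{-1}C$ is diagonalizable (being similar to the symmetric $K^{-1/2}CK^{-1/2}$), so a rank-two matrix really does have exactly two nonzero eigenvalues counted with multiplicity, making the trace bookkeeping airtight even when $\tau-1$ coincides with $0$ or $1$; and (ii) when $\tau=2$ your spectrum has a second zero eigenvalue, so to get the ``only if'' you should spell out that this violates the stated spectral profile (one zero and $N-1$ strictly negative) rather than producing outright linear instability.
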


\begin{remark} \label{rem:ermentrout}This was Lemma 2.4 in~\cite{BDP}.  The authors also used this characterization of $\mc D_N$ to show that it was convex.  We would also like to contrast the results of Lemma~\ref{lem:stability} with those existing in the literature, specifically~\cite[Theorem 3.1]{Ermentrout.1992}, which gives a sufficient condition for stability in terms of the signs of entries of the Jacobian.  See Section~\ref{sec:example-negative} below for more discussion.

\end{remark}
It also straightforward to see that $\mc D_N$ satisfies
the following additional properties:
\begin{itemize}
\item If ${\bf \omega }{\ \in\R^N_0}$ is sufficiently small then ${\bf \omega} \in \mc D_N$. This follows from an implicit function argument in a neighborhood of ${\bf \omega}=0$:  if $\omega = 0$, then $\theta=0$ is an orbitally stable fixed point;
\item If ${\bf \omega}\in \mc D_N$ then $-{\bf \omega}\in \mc D_N$. This follows from the fact that the Kuramoto model is invariant under the transformation ${\bf \theta} \mapsto -{\bf \theta}, {\bf\omega }\mapsto-{\bf \omega}$.
\end{itemize}

We now state a classical result that is essential below, but first a definition:

\begin{define}\label{def:balanced-absorbing}
  Let $B$ be a subset of a vector space. We say that $B$ is {\bf balanced}, or {\bf symmetric}, if $x\in B\implies -x\in B$, and we say that $B$ is {\bf absorbing} if for any $x\in B$, there exists $\lambda>0$ such that $\lambda x\in B$.
\end{define}

\begin{thm}\label{thm:simon}\cite[Corollary 1.10]{Simon.2011}
Let $B$ be a subset of a vector space.  If $B$ is open (resp.~closed), convex, balanced, and absorbing,
then $B$ is the open (resp.~closed) unit ball of some seminorm.  In fact, one can be a bit more explicit:  if we define
\begin{equation*}
  \norm{x}_B := \inf\{\lambda\in(0,\infty):\lambda^{-1} x \in B\}
\end{equation*}
then $\norm x_B$ is that seminorm.
\end{thm}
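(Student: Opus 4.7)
The plan is to verify the three defining properties of a seminorm for the Minkowski functional $\norm{x}_B$, and then to identify its unit ball with $B$ in both the open and closed cases. Throughout I would use the four hypotheses on $B$ (convex, balanced, absorbing, and either open or closed) in a focused way: absorbing makes the infimum well-defined and finite, balanced produces absolute-value homogeneity, convexity delivers subadditivity, and the topological hypothesis (open or closed) is what pins down whether the unit ball is strict or non-strict.

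First I would check well-definedness and homogeneity. Since $B$ is absorbing, for every $x$ there is some $\lambda>0$ with $\lambda^{-1}x\in B$, so the set in the definition of $\norm{x}_B$ is nonempty and $\norm{x}_B\in[0,\infty)$. For any scalar $c\neq 0$, the substitution $\mu = \lambda/\av{c}$ together with the balancedness of $B$ (which gives $\mu^{-1} cx\in B \iff \mu^{-1}\av{c}\,\sgn(c)\,x\in B \iff \mu^{-1}\av{c} x\in B$) yields $\norm{cx}_B = \av{c}\norm{x}_B$; the case $c=0$ is immediate from $0\in B$, which itself follows from absorbing plus balanced plus convex.

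Next I would prove subadditivity. Fix $\varepsilon>0$ and choose $\lambda,\mu>0$ with $\lambda<\norm{x}_B+\varepsilon$, $\mu<\norm{y}_B+\varepsilon$, and $\lambda^{-1}x,\mu^{-1}y\in B$. Writing
\[
\frac{x+y}{\lambda+\mu} = \frac{\lambda}{\lambda+\mu}\cdot\frac{x}{\lambda} + \frac{\mu}{\lambda+\mu}\cdot\frac{y}{\mu},
\]
convexity of $B$ forces $(\lambda+\mu)^{-1}(x+y)\in B$, hence $\norm{x+y}_B\le \lambda+\mu<\norm{x}_B+\norm{y}_B+2\varepsilon$. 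Letting $\varepsilon\downarrow 0$ gives the triangle inequality. Combined with the previous step this shows $\norm{\cdot}_B$ is a seminorm.

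Finally I would identify the unit ball. Let $U=\{x:\norm{x}_B<1\}$ and $\overline U=\{x:\norm{x}_B\le 1\}$. If $x\in B$, then in the closed case $1^{-1}x\in B$ so $\norm{x}_B\le 1$, giving $B\subseteq \overline U$; in the open case, since $B$ is open and absorbing at $x$, there exists $\lambda>1$ with $\lambda^{-1}x$ still in $B$ (apply the absorbing property to the translated neighborhood, or more directly use that $tx\in B$ for $t$ slightly larger than $1$ by openness along the ray), so $\norm{x}_B<1$ and $B\subseteq U$. Conversely, if $\norm{x}_B<1$ pick $\lambda<1$ with $\lambda^{-1}x\in B$; then $x=\lambda(\lambda^{-1}x)+(1-\lambda)\cdot 0$ lies in $B$ by convexity, so $U\subseteq B$; and if $\norm{x}_B\le 1$ a limiting argument using closedness of $B$ (with $\lambda_n\downarrow 1$ and $\lambda_n^{-1}x\in B$) gives $\overline U\subseteq B$ in the closed case.

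The main obstacle I anticipate is the open-case identification $B=U$: verifying that $x\in B$ implies $\norm{x}_B<1$ strictly (not merely $\le 1$) requires the openness hypothesis in an essential way, and one must be careful to use it to extract an honest $\lambda<1$ such that $\lambda x\in B$, rather than just a neighborhood of $x$ in $B$. Once that subtlety is handled, everything else is a short convexity argument.
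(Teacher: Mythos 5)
Your proposal is correct, but note that the paper does not actually prove Theorem~\ref{thm:simon}: it is cited verbatim from Simon's \emph{Convexity} (\cite[Corollary 1.10]{Simon.2011}), so there is no in-paper proof to compare against. What you have written is the standard Minkowski functional (gauge) argument --- finiteness from absorption, homogeneity from symmetry, subadditivity from convexity, unit-ball identification split on the open/closed hypothesis --- and this is exactly the argument one finds in Simon or any functional analysis text, so you are in effect on the same route as the cited source.

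Two small corrections. First, the paper's Definition~\ref{def:balanced-absorbing} states ``absorbing'' as: for any $x\in B$, there exists $\lambda>0$ such that $\lambda x\in B$ --- which, as written, is trivially satisfied by every nonempty set; you have (correctly) read this as the standard definition, namely that for every $x$ in the \emph{whole space} there exists $\lambda>0$ with $\lambda^{-1}x\in B$, which is what makes the infimum finite. Second, in the open case you write ``there exists $\lambda>1$ with $\lambda^{-1}x$ still in $B$, ... so $\norm{x}_B<1$''; the inequality should be $\lambda<1$. If $tx\in B$ for some $t>1$ (your parenthetical, which is the right picture), then $\lambda=1/t<1$ satisfies $\lambda^{-1}x\in B$ and hence $\norm{x}_B\le\lambda<1$; as typed, $\lambda>1$ with $\lambda^{-1}x\in B$ gives only $\norm{x}_B\le\lambda$, which is vacuous. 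Finally, you implicitly require the ambient space to be a topological vector space so that $t\mapsto tx$ is continuous and ``open''/``closed'' make sense; the paper only ever invokes the theorem on finite-dimensional $\R^N_0$, so this is harmless, but the hypothesis deserves a word in a self-contained proof.
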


Thus there exists some semi-norm $\Vert\cdot\Vert_{\rm Kur}$ with the property that
\begin{equation*}
  \mc D_N = \left\{\omega : \sum_i \omega_i = 0\land \Vert{\bf \omega}\Vert_{\rm Kur}<1\right\}.
\end{equation*}
Note that $\R^N_0$ is a vector space and thus Theorem~\ref{thm:simon} applies with $\R^N_0$ as the ambient space.
It seems unlikely that this norm can be expressed in a simple form in terms of ${\bf \omega}$. However,~\cite{BDP} gave constructions for several polytopes that are contained in $\mc D_N$, giving sufficient conditions for stability. In this section we will show that these polytopes can be realized as the units balls for various norms, and that these norms can be expressed explicitly in terms of ${\bf \omega}$. More importantly we show how, given two necessary or sufficient conditions for stable phase-locking we can combine them to produce a better such condition.

{\begin{remark}
  We will introduce several norms below, but we find the Euclidean norm useful.  Throughout the paper, whenever we refer to a norm without subscripts, this will be the Euclidean norm, or the induced Euclidean norm.
\end{remark}}

\subsection{Constructing Boundary Points}

We begin by giving constructions for several sets of frequency vectors that lie
on the boundary of the phase-locked region, as well as the
corresponding configurations of oscillators. To motivate these
constructions we first note that the frequencies of a phase-locked
state can be determined from the configuration angles $\theta_i$ via
\[
\omega_i = -  \sum_j \sin(\theta_j - \theta_i).
\]
This follows from setting the righthand side of \ref{eq:K} to zero,
and we can consider this as giving a map between phase-locked
configurations ${\bf \theta}$ and phase-locked frequencies ${\boldsymbol{
  \omega}}$. The Jacobian of the vector field is then given by $\bf{J} = - \nabla_{\bf
  \theta} \omega$.  Then one has the obvious identity
\[
\nabla_{\bf \theta} \Vert {\bf \omega} \Vert^2 = - 2 {\bf J} {\bf \omega}.
\]
Therefore any critical point of $\Vert {\bf \omega} \Vert^2$, the
squared length of the frequency vector, with respect to the
configuration ${\bf \theta}$, gives a frequency vector ${\bf \omega}$
that lies in the kernel of the Jacobian. These critical points are
candidates for points on the boundary of the stably phase-locked
region, since at any point on the boundary the Jacobian necessarily has a kernel of
dimension two or higher. In practice we will not try to find critical
points with respect to all possible configurations, but will instead
find critical points with respect to certain submanifolds of very
symmetric configurations. One must then, of course, check that these
are in fact critical points of the full problem. With this in mind we
present two families of special configurations that will be important in this paper.

We will find it necessary to consider vectors with repeated terms below, and in various permutations, so we use the following notation:

\begin{notation}\label{not:abc}
   When we write the vector $(a^{(k)},b^{(l)},c^{(m)})^t$, we mean the (column) vector in $\R^{k+l+m}$ with coefficients
  \begin{equation*}
    (\underbrace{a, a, \ldots,a}_{\text{$k$ times}},\underbrace{b,b,\ldots,b}_{\text{l times}},,\underbrace{c,c,\ldots,c}_{\text{m times}})^t,
  \end{equation*}
  and similarly for more or fewer terms.  Given a vector $x\in \R^n$, we define $\Sym(x)$ as the set of all vectors that can be obtained from $x$ by permuting its coefficients.  In particular, the set $\Sym(a^{(k)},b^{(l)},c^{(m)})$ is the set of all vectors with exactly $k$ entries equal to $a$, $l$ entries equal to $b$, and $m$ entries equal to $c$.
\end{notation}

\begin{define}\label{def:DB}
  For each $1 \le k \le N$, let $v_k = ((N-k)^{(k)}, (-k)^{(N-k)})$ and define
  \begin{equation*}
  \rdb N := \bigcup_{k=1}^N \Sym(v^k).
\end{equation*}
In other words, $\rdb N$ is the set of all vectors in $\R^N$ with $k$ entries equal to $(N-k)$ and $(N-k)$ entries equal to $-k$.  Note that $\rdb N \subseteq\R^N_0$.
\end{define}

\begin{define}\label{def:CS}
Let $z = (1,-1,0,0,\dots, 0)$ and define
\begin{equation}\label{eqn:defoftaun}
\tau_N :=  \max_{\phi\in \R} \left[(N-2) \sin(\phi) +  \sin(2 \phi)\right].
\end{equation}
Then
\begin{equation*}
  \rcs N := \Sym(\tau_N z),
\end{equation*}
that is to say, elements of $\rcs N$ are those vectors with one entry equal to $\tau_N$, one entry equal to $-\tau_N$, and the rest zero.
\end{define}

\begin{prop}
  $\av{\rdb N} = 2^N-2$ and $\av{\rcs N} = N(N-1)$.
\end{prop}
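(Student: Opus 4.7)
My plan is to verify the two cardinality claims by direct enumeration, since each point set is defined as a union of symmetric orbits.

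For $|\rdb{N}| = 2^N - 2$, I would argue as follows. For each $k$ with $1 \le k \le N-1$, the vector $v_k = ((N-k)^{(k)},(-k)^{(N-k)})$ has exactly two distinct entries: a positive entry $N-k$ occurring with multiplicity $k$, and a negative entry $-k$ occurring with multiplicity $N-k$. The orbit $\Sym(v_k)$ is then parameterized by choosing which $k$ of the $N$ coordinates carry the positive value, so $|\Sym(v_k)| = \binom{N}{k}$. The key step is then to show pairwise disjointness: if $1 \le k < k' \le N-1$, then every element of $\Sym(v_k)$ has its unique negative coordinate value equal to $-k$, which differs from $-k'$, so $\Sym(v_k) \cap \Sym(v_{k'}) = \emptyset$. (The endpoint $k=N$ in the index range of Definition~\ref{def:DB} yields the zero vector and so contributes no boundary point; equivalently it should be excluded from the count.) Summing yields
\begin{equation*}
|\rdb{N}| \;=\; \sum_{k=1}^{N-1} \binom{N}{k} \;=\; 2^N - 2.
\end{equation*}

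For $|\rcs{N}| = N(N-1)$, I would first note that $\tau_N > 0$ for all $N \ge 2$ (for $N \ge 3$ the choice $\phi = \pi/2$ gives $(N-2)\sin\phi + \sin(2\phi) = N - 2 > 0$, and $\tau_2 = \max_\phi \sin(2\phi) = 1$). Hence scaling by $\tau_N$ is a bijection on $\R^N$, and $|\rcs{N}| = |\Sym(z)|$ where $z = (1,-1,0,\ldots,0)$. An element of $\Sym(z)$ is uniquely determined by specifying the index carrying $+1$ and the index carrying $-1$; there are $N$ choices for the former and $N-1$ remaining choices for the latter, giving $N(N-1)$.

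I do not anticipate any genuine obstacle: the only substantive point is the disjointness argument for $\rdb{N}$, which reduces to noting that the signed pair of nonzero values $(N-k, -k)$ uniquely encodes $k$ in the range $1 \le k \le N-1$.
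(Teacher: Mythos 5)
Your proof is correct and follows essentially the same approach as the paper's: the paper identifies each element of $\rdb N$ with its (nonempty, proper) set of positive coordinates, which is the same count as your $\sum_{k=1}^{N-1}\binom{N}{k}$; and the paper counts $\rcs N$ by the same choice of a positive index and a negative index. You add a bit more detail on disjointness of the orbits, and your observation that the $k=N$ term in Definition~\ref{def:DB} yields only the zero vector (so the sum must run over $1\le k\le N-1$ to get $2^N-2$) is a genuine and worthwhile clarification that the paper's proof glosses over.
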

\begin{proof}
  Counting $\rcs N$ is easier:  choose one index to be positive and one to be negative, and there are clearly $N(N-1)$ such choices.

  For $\rdb N$, note that each vector is determined by the set of entries that are positive, but we cannot have all entries positive or have all entries negative.  Therefore the number of elements of $\rdb N$ is the number of nonempty proper subsets of $\{1,2,\dots,N\}$.
\end{proof}

We call the constant $\tau_N$ the \emph{Chopra-Spong constant} and it
may be checked that $\rcs N$ is simply the rescaling of the
root vectors in the $A_N^*$ lattice.  With some computation\cite{Chopra.Spong.2009, BDP}, we can compute $\tau_N$ exactly and asymptotically:
\begin{align*}
\tau_N &=  \frac{1}{16\sqrt{2}} \left(\sqrt{32 + (N-2)^2} + 3 (N-2)\right)\sqrt{16 + (N-2) \sqrt{32 + (N-2)^2} - (N-2)^2} \\
&\approx (N-2) + O(N^{-1})\qquad N\gg 1.
\end{align*}
In particular, the exact formula will be useful in some computations below.  We also note that $N-2 \le \tau_N \le N-1$:  if we plug $\phi\mapsto \pi/2$ into the right-hand side of~\eqref{eqn:defoftaun}, we get immediately that $\tau_N\ge N-2$, and clearly $\tau_N \le N-1$.

We can now state the following proposition, which is that both of these special sets of configurations are always contained in the boundary of the set of configurations that give rise to stable solutions, i.e. are always bifurcation points for~\eqref{eq:K}:

\begin{prop}
  We have $\rdb N \subseteq \bdy \mc D_N$ and $\rcs N\subseteq \bdy \mc D_N$.
\end{prop}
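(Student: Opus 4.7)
The plan is to apply Lemma~\ref{lem:stability} directly: for each candidate $\omega$ in $\rdb N \cup \rcs N$ I would exhibit a phase-locked configuration $\theta^*$ realizing $\omega$ and then verify the marginality conditions $\tau(\theta^*) = 2$ and $\kappa_j(\theta^*) > 0$ for every $j$. Coupled with a one-parameter family of strictly stable configurations whose frequencies converge to $\omega$, this will place $\omega$ on $\bdy \mc D_N$.

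For $\rdb N$, I use the two-cluster ansatz $\theta_i = \pi/2$ for $i \le k$ and $\theta_i = 0$ for $i > k$. A direct evaluation of $\omega_i = -\sum_j \sin(\theta_j - \theta_i)$ gives $\omega = v_k$. The cosine sums collapse to $\kappa_i = k$ for $i \le k$ and $\kappa_i = N-k$ for $i > k$, so $\kappa_j > 0$ for $1 \le k \le N-1$ and $\tau = k \cdot (1/k) + (N-k)\cdot(1/(N-k)) = 2$ identically. All other vectors in $\Sym(v_k)$ follow from the $S_N$-invariance of~\eqref{eq:K}. Replacing $\pi/2$ by $\delta \in (0, \pi/2)$ produces strictly stable deformations with $\omega = \sin(\delta) v_k$ and $\tau < 2$ (by monotonicity in $\cos\delta$); letting $\delta \to \pi/2^-$ shows $v_k \in \overline{\mc D_N}$.

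For $\rcs N$, I use $\theta_1 = \phi^*$, $\theta_2 = -\phi^*$, $\theta_i = 0$ for $i \ge 3$, where $\phi^*$ maximizes $(N-2)\sin\phi + \sin 2\phi$. A direct computation gives $\omega = \tau_N (1,-1,0,\dots,0)$. To verify $\tau(\theta^*) = 2$, I would use the critical equation $(N-2)\cos\phi^* + 2\cos 2\phi^* = 0$ to simplify $\kappa_1 = \kappa_2 = 1 + \tfrac{N-2}{2}\cos\phi^*$ and $\kappa_i = N-2 + 2\cos\phi^*$ for $i \ge 3$; clearing denominators in $\tau = 2/\kappa_1 + (N-2)/\kappa_3$ reduces the desired identity to the critical equation in the form $(N-2)\cos\phi^* + 4\cos^2\phi^* = 2$. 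Positivity of the $\kappa_j$ follows from $\phi^* \in (0, \pi/2)$, which is read off from the signs of the derivative of $(N-2)\sin\phi + \sin 2\phi$ at the endpoints. Permutations are absorbed by $S_N$-symmetry, and varying $\phi$ in $(0, \phi^*)$ yields the approximating strictly stable configurations.

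The step I expect to be most delicate is the implication ``marginally stable fixed point with $\tau = 2$ implies $\omega \notin \operatorname{int} \mc D_N$.'' For $\rcs N$ this is immediate from the Chopra--Spong necessary condition~\eqref{eqn:CS}, which forces $\max_{i,j}(\omega_i - \omega_j) < 2\tau_N$ for every $\omega \in \mc D_N$ and is saturated at $\omega = \tau_N z$. For $\rdb N$ the Chopra--Spong bound is too weak when $N$ is large, and I would instead appeal to the convexity of $\mc D_N$ (Remark~\ref{rem:ermentrout}) together with the fact that the two-cluster branch is the natural analytic continuation of the zero configuration along the ray $\{s v_k : s \ge 0\}$: since this branch saturates at $s = 1$ with a two-dimensional Jacobian kernel (a saddle-node in $\omega$), convexity combined with $0 \in \operatorname{int}\mc D_N$ then pins $v_k$ to $\bdy \mc D_N$ along that ray.
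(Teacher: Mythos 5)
Your computations are correct and the approach is the paper's approach: exhibit the symmetric configuration, verify it is a fixed point, and use Lemma~\ref{lem:stability} to show marginal stability ($\tau=2$, $\kappa_j>0$), then approximate from inside $\mc D_N$ by strictly stable deformations. The algebra checks out: for the two-cluster ansatz $\kappa_i \in \{k, N-k\}$ so $\tau$ sums to exactly $2$, and for the Chopra--Spong ansatz the critical equation $(N-2)\cos\phi^*+2\cos2\phi^*=0$, rewritten as $(N-2)\cos\phi^*+4\cos^2\phi^*=2$, does close the identity $\tau=2$. The paper phrases the marginality as ``the Jacobian has a two-dimensional kernel spanned by $(1,\dots,1)^t$ and $\omega$,'' which is the same thing as your $\tau=2$ statement once all $\kappa_j>0$.

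Where the proposal does not quite close is the step you yourself flag: showing $v_k \notin \operatorname{int}\mc{D}_N$ for the D\"orfler--Bullo points. Convexity of $\mc{D}_N$ together with $0\in\operatorname{int}\mc{D}_N$ only tells you that $\mc{D}_N\cap\{s v_k : s\ge 0\}$ is an interval $[0,s^*)$, and your limiting argument gives $s^*\ge 1$; the saddle-node fold of the two-cluster branch at $\delta=\pi/2$ rules out extending \emph{that} branch past $s=1$, but does not by itself exclude a different, strictly stable fixed point realizing $\omega = s v_k$ for some $s>1$ on an unrelated branch. What actually closes the gap is the uniqueness (up to rotation) of the stable phase-locked configuration in the all-to-all model --- precisely what the paper's citation to Bronski--DeVille--Park is carrying. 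The paper's own one-sentence proof leans on the same external fact, so your argument is consistent with theirs provided you invoke that uniqueness explicitly rather than convexity alone. For the Chopra--Spong points your appeal to the independently established Chopra--Spong necessary condition is a clean way to sidestep the issue, since that bound is saturated at $\omega=\tau_N z$.
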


First we consider $\rdb N$.  A relatively straightforward computation (for details see Bronski, DeVille and Park\cite{BDP}) that these configurations are fixed points of the Kuramoto flow, and that they lie on the boundary of the region of stability: the Jacobian has a two dimensional kernel spanned by $(1,1,1,\ldots, 1)^t$ and $\omega$.

Now for $\rcs N$. These points were originally constructed by Chopra and
Spong \cite{Chopra.Spong.2009} in the construction of a sharp necessary condition and later, from a somewhat different point of
view, by Bronski, DeVille and Park\cite{BDP}. The basic idea is
to find the configuration admitting the largest possible frequency
difference: that is to say maximizing the quantity
\[
\omega_i - \omega_j = 2 \sin(\theta_i - \theta_j) + \sum_k\left(
\sin(\theta_k-\theta_j) - \sin(\theta_k - \theta_i)
\right)
\]
over all $\theta$. By an application of Lagrange multipliers, we see that a maximizing configuration must have the form
$\theta_i = \phi, \theta_j = -\phi, \theta_k=0$, and maximizing $\phi$ gives $\tau_N$.  We can check directly that this configuration is a fixed point,
and that the Jacobian is positive semi-definite with a two dimensional kernel spanned by $(1,1,1,\ldots,1)^t$ and $\omega$ and this is therefore a stable phase-locked solution (see~\cite{Mirollo.Strogatz.05, Ermentrout.1985}).

\begin{remark}

We will show later in this paper that the sufficient condition implied by this set of points $\rdb N$ is exactly D\"{o}rfler--Bullo condition
\[
\max_{i,j} (\omega_i - \omega_j )\leq N.
\]
There is a complementary necessary condition, also expressible in terms of some explicit norm of the frequency vector $\omega$ that we will compute later in the paper.

One way to think about these configurations is via symmetry. Since all oscillators are identical the phase locked region
must be invariant under the symmetry group $S_N \times S_2$ consisting
of all permutations of the coordinates together with $\omega \mapsto
-\omega.$ In particular if
$\omega$ lies on the boundary of the phase-locked region then any
permutation of $\omega$ must lie on the boundary of the phase-locked
region. For fixed $|S|=\sigma$ these frequency vectors represent configurations that
are invariant under the subgroup $S_\sigma \times S_{N-\sigma}$ of
permutations fixing $S$. If one takes $j$ oscillators to be at angle
$\varphi$ and $N-j$ oscillators to be at angle $0$ then the
corresponding frequency for which this is a fixed point is given by
$\omega =\sin(\phi) ((N-j)^j, (-j)^{N-j})^t$. The length of this vector
has a critical point at $\varphi = \frac{\pi}{2}$. With a bit of extra
work one can check that this critical point with respect to a subset
configurations actually lies in the kernel of ${\bf J}$.

\end{remark}

\begin{remark}

The points in $\rcs N$ are precisely the vertices of the (rescaled) Voronoi
cell for the $A_N$ root lattice -- see the text of Conway and Sloane
\cite{CS} for details.

Another interpretation of  these configurations is as follows: one can think of maximizing
$\Vert \omega \Vert^2_2$ over the subset of configurations that
are invariant under $S_{N-2}$: there are $N-2$ oscillators at the
origin, one oscillator at angle $\phi$ and one at angle $-\phi$. The
$\omega$ corresponding to this configuration is of the form $\omega = ((N-2)
\sin \varphi + \sin 2 \varphi) (1,-1,0^{N-2})^t$. Maximizing the length
of this vector over $\varphi$ leads to the Chopra-Spong constant.
\end{remark}

We can further generalize the Chopra-Spong calculation to define a
family of sets of points on the boundary of the phase-locked region.
\begin{define}
Let $\rcs {N,j}$ consist of all permutations of the vector
\[
\tau_{N,j} \cdot(1^{(j)},(-1)^{(j)}, 0^{(N-2j)})^t,
\]
where the constant $\tau_{N,j}$ is defined as
\[
\tau_{N,j} = \max_\phi [(N-2 j) \sin\phi + j \sin 2\phi].
\]
\end{define}

These frequencies represent configurations of the following form:
There are $N-2j$ oscillators at angle zero, $j$ oscillators that lead
this group by angle $\phi^*$, where $\phi^*$ is the argument that
maximizes the quantity $\tau_{N,j}$, and $j$ oscillators which trail
this group by angle $-\phi^*$.   It may be verified that the constant
$\tau_{N,j}$ can be computed explicitly as
\[
\tau_{N,j} = \frac{\left(\sqrt{36 j^2-4 j N+N^2}-6 j+3 N\right) \sqrt{-2 \sqrt{36 j^2-4 j
   N+N^2}+\frac{N \left(\sqrt{36 j^2-4 j N+N^2}-N\right)}{j}+12 j+4 N}}{16 \sqrt{2 j}},
\]
and that the case $j=1$ reduces to the Chopra-Spong constant.  A couple of things to note here. Firstly we observe that  these generalized Chopra-Spong points lie on the boundary of the phase-locked region $\partial {\mathcal D}_N$. Secondly note that the argument above gives us that $N-2j \le \tau_{N,j} \le N-j$. Finally notice that these points are well-defined for $2j\leq N$, and that for $N$ even and $j=N/2$ these points are a strict subset of the D\"orfler-Bullo points.

\subsection{The inscribing and circumscribing polytopes}

In this section, we will present a method that takes a set of points and generates two special polytopes.  We will show that when these points are chosen to lie on the boundary of any convex set $\Omega$, one of these polytopes will be inscribed inside $\Omega$, and the other will circumscribe $\Omega$.  In the previous section, we presented two natural collections of points living on the boundary of the phase-locked region $\mc{D}_N$; putting this together will lead to two inscribing and two circumscribing polytopes for the phase-locked region $\mc{D}_N$.

\begin{define}
Given any finite  collection of points $R$ we define
two polytopes $\inscr(R), \circscr(R)$. \bi\ii The polytope $\inscr(R)$ is defined as the convex
hull of the points in $R$. {For the cases of interest here all of the points in $R$ are extremal and the convex hull of $R$ is the polytope with vertices given by the elements of $R$.} \ii The polytope $\circscr(R)$ is defined as
follows: given a point $x \in R$ define the supporting half-space
$H_x$ as the closed half-space containing the origin whose boundary
$\partial H_x$ has normal vector $x$. Let $\circscr(R)$ be the intersection of these
supporting half-spaces
\[
\circscr(R) = \bigcap_{x\in R} H_x.
\]
\ei

\end{define}

\begin{remark}
  It follows easily that if $\Omega$ is convex and $R\subseteq \partial\Omega$, then we have the
inclusions
\[
\inscr(R) \subseteq \Omega \subseteq \circscr(R).
\]
We remark that convexity of the phase-locked region is only known for the  case of equally weighted all-to-all
coupling, and the methods used here are only applicable when there is a convex phase-locked domain.
\end{remark}

\begin{notation}
  Since the sets $\rcs N, \rdb N$ figure so prominently in the sequel, we will simplify notation slightly by writing
  \begin{equation*}
    \ics N:= \inscr(\rcs N),\quad
    \idb N:= \inscr(\rdb N),\quad
    \ccs N:= \circscr(\rcs N),\quad
    \cdb N:= \circscr(\rdb N).
  \end{equation*}
  
\end{notation}

\begin{remark}
We now give a few examples, but one note on visualization.  For any given $N$, we can represent $\rcs N, \rdb N$ as living in $\R^{N-1}$ after we have chosen a basis for $\R^N_0$. We will make the following choices below:  for any $N$ and $1\le i\le N$, we define
\begin{equation*}
  w_{N,k} = ((1)^{(k)}, (-k)^{(1)}, (0)^{(N-k-1)})^t,
\end{equation*}
and let $u_{N,k} = w_{N,k}/\norm{w_{N,k}}$. As an example, we will represent a generic vector in $\R^4_0$ by
\begin{equation*}
  x\begin{pmatrix}
    1/\sqrt2\\-1\sqrt2\\0\\0
  \end{pmatrix} +
  y\begin{pmatrix}
    1/\sqrt6\\1/\sqrt6\\-2\sqrt6\\0
  \end{pmatrix}+
  z\begin{pmatrix}
    1/\sqrt{12}\\1\sqrt{12}\\1\sqrt{12}\\-3\sqrt{12}
  \end{pmatrix}.
\end{equation*}

\end{remark}

\begin{example}
For $N=3$, the set $\rdb3$ consists of the six vectors
\[
\pm \left(\begin{array}{c}2 \\ -1 \\ -1 \end{array}\right),  \pm \left(\begin{array}{c}-1 \\ 2 \\ -1 \end{array}\right),  \pm \left(\begin{array}{c}-1 \\ -1 \\ 2 \end{array}\right)
\]
while the set $\rcs3$ consists of the six vectors
\[
\pm \tau_3 \left(\begin{array}{c}1 \\ -1 \\ 0 \end{array}\right), \pm \tau_3 \left(\begin{array}{c}1 \\ 0 \\ -1 \end{array}\right), \pm \tau_3 \left(\begin{array}{c}0 \\ 1 \\ -1 \end{array}\right),
\]
where the Chopra-Spong constant is $\tau_3 =
\frac{\left(3+\sqrt{33}\right)\sqrt{15+\sqrt{33}}} {16
  \sqrt{2}}\approx 1.76017$.
In this case $\idb3$, $\cdb3$, $\ics3$, and  $\ccs3$
are all regular hexagons of side lengths $\sqrt{6},
2\sqrt{2}, \tau_3\sqrt{2} , \tau_3\frac{2\sqrt{6}}{3} $
respectively. $\idb3$ and $\ccs3$ are oriented the same
way, as are $\cdb3$ and $\ics3$, and the two pairs are
offset from one another by $\frac{\pi}{6}$. One can get some sense of
the tightness of these inclusions by computing the areas of these
hexagons. We have $\av{\idb 3} = 9\sqrt{3} \approx 15.5885$, $\av{\cdb3} = 12\sqrt{3} \approx 20.7846$, $\av{\ics 3} = 3
\sqrt{3} (\tau_3)^2 \approx 16.0987$ and
$\av{\ccs3}=4\sqrt{3} (\tau_3)^2 \approx 21.4649.$
Since the $\inscr$ polytopes are contained in the phase-locked region and
the $\circscr$ polytopes contain the phase-locked region this gives us upper
and lower bounds on the true area of $20.78$ and $16.09$
respectively. We note that while the $\rcs 3$ points give the
better inner approximation and the $\rdb 3$ points give the better outer
approximation in terms of area there are regions which are contained
in one which are not contained in the other.  (See Figure \ref{fig:HexagonsOfThreeOscillator}.)
\begin{figure}[!tbp]
\centering
  \begin{minipage}[b]{0.45\textwidth}
\centering
    \includegraphics[width=\textwidth]{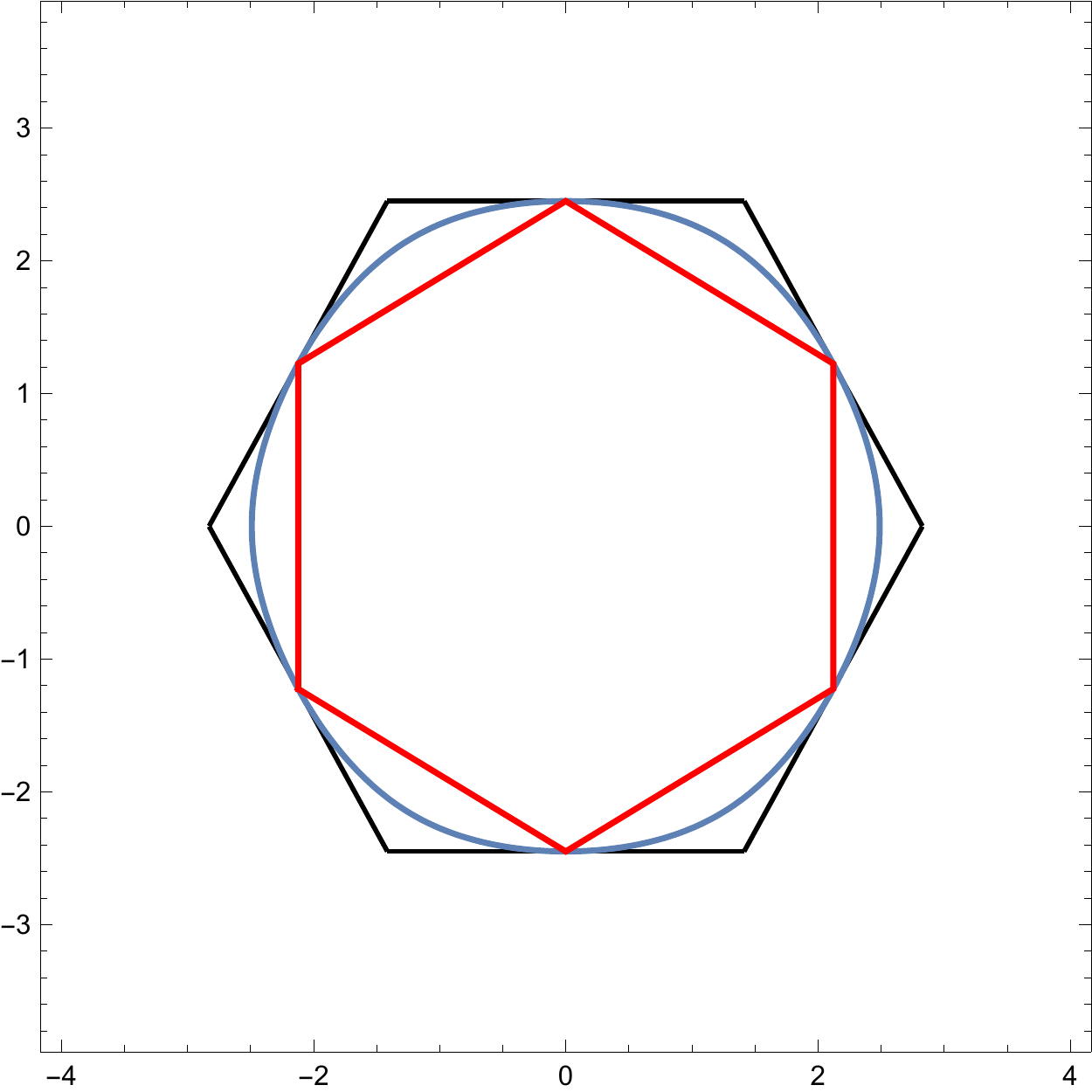}\\
    (a) The necessary (black) and sufficient (red) conditions defined by $R_3^{\text{DB}}$
 \end{minipage}
  \begin{minipage}[b]{0.45\textwidth}
\centering
    \includegraphics[width=\textwidth]{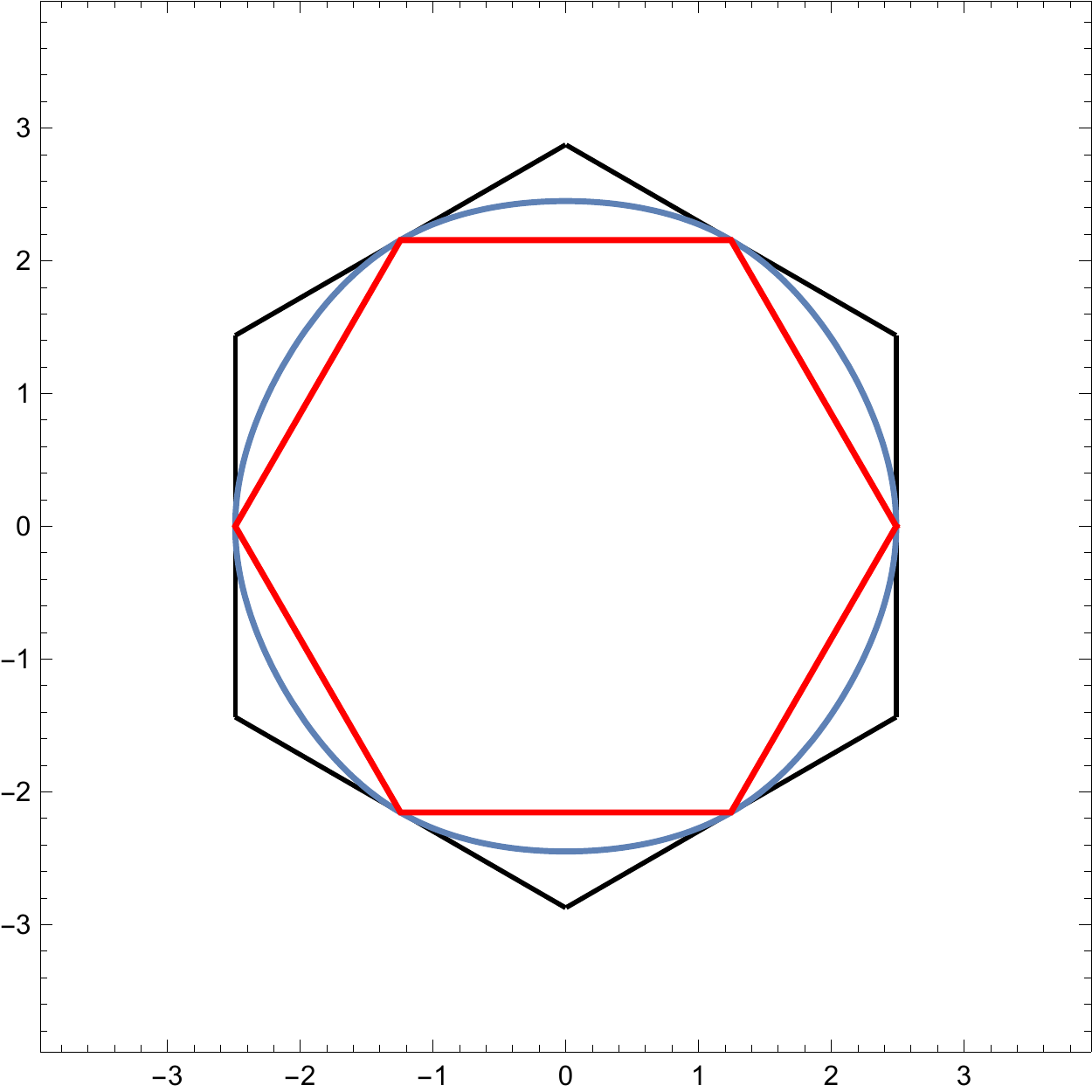}\\
    (b)  The necessary (black) and sufficient (red) conditions defined by$R^{\text{CS}}_3$
\end{minipage}
\caption{The phase-locked region and the necessary and sufficient conditions
  defined to the points $R^\text{DB}_3$ and $R^\text{CS}_3$.  {In each case, we are using red to denote the $\inscr$ polytopes and black to represent the $\circscr$ polytopes.}}
\label{fig:HexagonsOfThreeOscillator}
\end{figure}

\end{example}

In the case $N=3$, the two dimensional polyhedra ${\mathcal I}_3^{\text{CS}}, {\mathcal I}_3^{\text{DB}},{\mathcal C}_3^{\text{CS}},{\mathcal C}_3^{\text{DB}}$ are hexagons. This is a bit misleading, since the higher dimensional polytopes
are much richer.  We get a glimpse of this when we consider $N=4$.

\begin{example}
For $N=4$ the set $\rdb4$ consists of the fourteen vectors given by
all permutations of
\[
\pm \left(\begin{array}{c}3 \\ -1 \\ -1 \\ -1 \end{array}\right),  \left(\begin{array}{c}2 \\ 2 \\ -2 \\ -2 \end{array}\right).
\]
The set $\rcs4$ consists of the twelve vectors given by all
permutations of
\[
\tau_{4} \left( \begin{array}{c} 1 \\ -1 \\ 0 \\ 0 \end{array}\right)
\]
where the Chopra--Spong constant is $\tau_4 =
\frac{3\sqrt{3}}{2}$.
The polytope $\idb4$ is a rhombic dodecahedron ($V=14,E=24,F=12$)
with edge length $2\sqrt{3}$ and volume 128. The polytope
$\cdb4$ is a truncated octahedron ($V=24,E=36,F=14$) with edge
length $2\sqrt{2}$ and volume 256. The
polytope $\ics4$ is a cuboctahedron ($V=12,E=24,F=14$) with
edge length $\tau_4^{\text{CS}} \sqrt{2} = \frac{3 \sqrt{6}}{2}$ and volume
$\frac{135\sqrt{3}}{2}\approx 117$. The
polytope $\ccs4$ is a rhombic dodecahedron with the same
orientation as   $\idb4$. The
polytope $\ccs4$ has volume $162\sqrt{3}\approx 280.6$. The volume of the actual phase-locked
region is approximately $210.$, via numerical integration.

The inscribed polyhedra $\inscr$ are depicted in Figures \ref{fig:inscribed4}a
and~\ref{fig:inscribed4}b. It is clear from the graphs that these figures are
dual polytopes --- the vertices of one are (up to scaling) the perpendiculars to the faces of the
other.  One can also see that, while the volume of the inscribed
rhombic dodecahedron is somewhat larger than that of the inscribed
cuboctahedron the two are not strictly comparable - there are points
in each set that are not contained in the other.

Similarly the circumscribed polyhedra are depicted in Figures
\ref{fig:circumscribed4}a and
\ref{fig:circumscribed4}b. Again we see that the volume of the circumscribed
rhombic dodecahedron (the polytope whose normals are given by the
Chopra-Spong points) has a somewhat larger volume than that of the
circumscribed truncated octahedron, but again there are points in each
set which are not contained in the other.

\end{example}

\begin{figure}[!tbp]
\centering
  \begin{minipage}[b]{0.45\textwidth}
\centering
    \includegraphics[width=\textwidth]{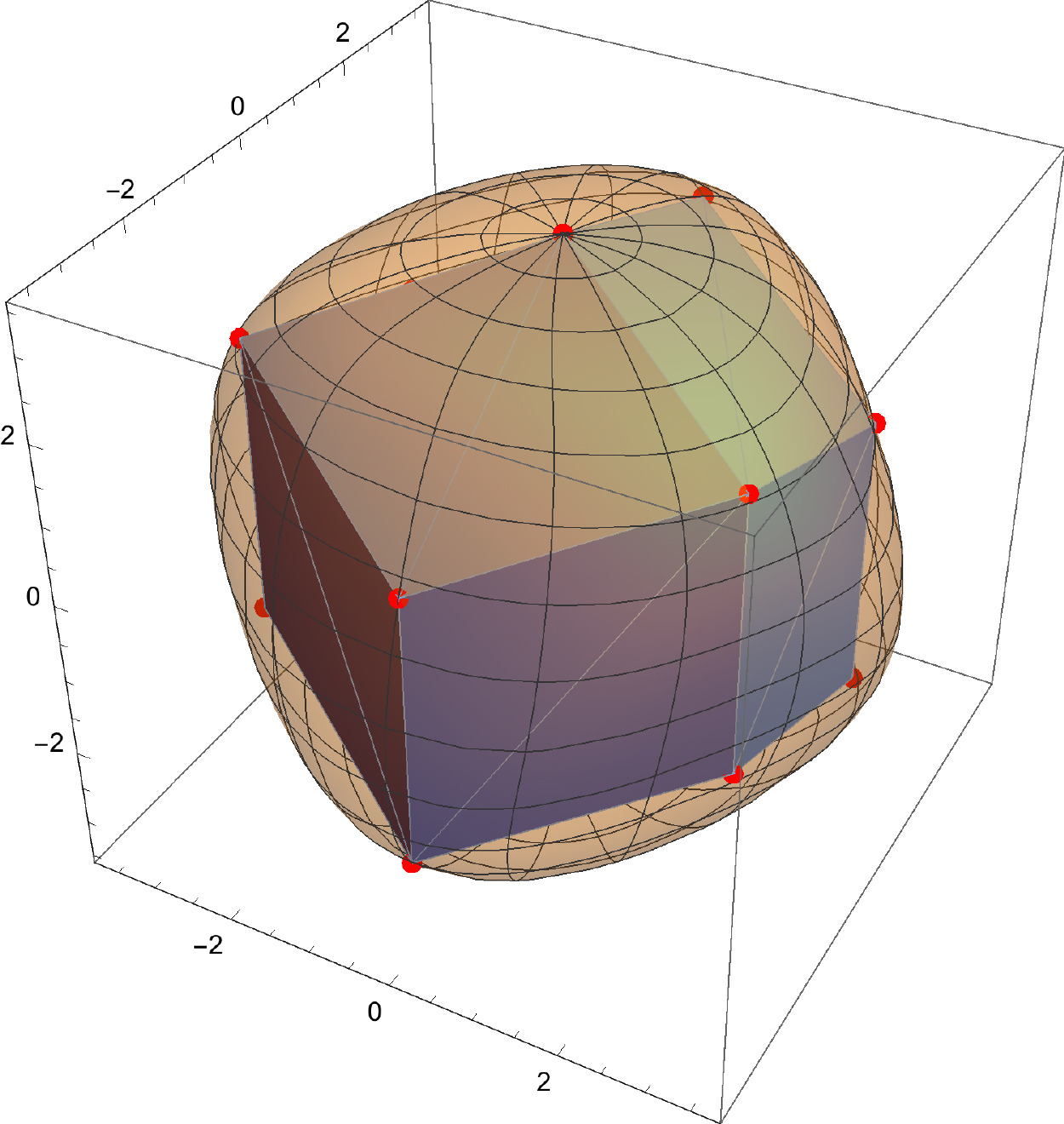}\\
    (a) The phase-locked region and inscribed rhombic
      dodecahedron $\idb4$
 \end{minipage}
  \begin{minipage}[b]{0.45\textwidth}
\centering
    \includegraphics[width=\textwidth]{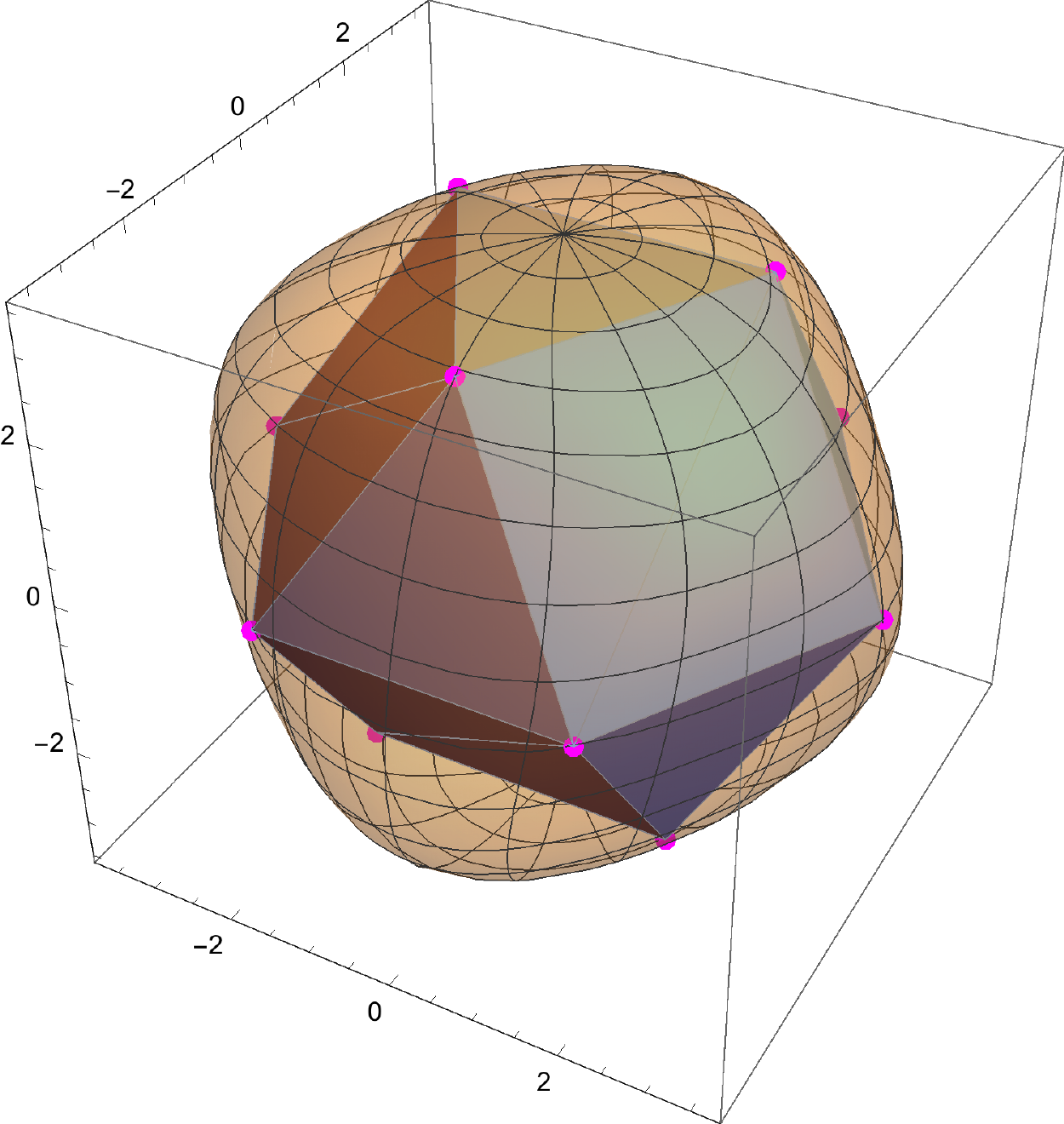}\\
    (b) The phase-locked region and inscribed cuboctahedron $\ics4$
\end{minipage}
\caption{The phase-locked region for $N=4$ and the inscribed polytopes  $\idb4$ and $\ics4$.  The defining points $\rdb4$ and $\rcs4$  are marked by colored vertices.}
\label{fig:inscribed4}
\end{figure}

\begin{figure}[!tbp]
  \centering
  \begin{minipage}[b]{0.45\textwidth}
\centering
    \includegraphics[width=\textwidth]{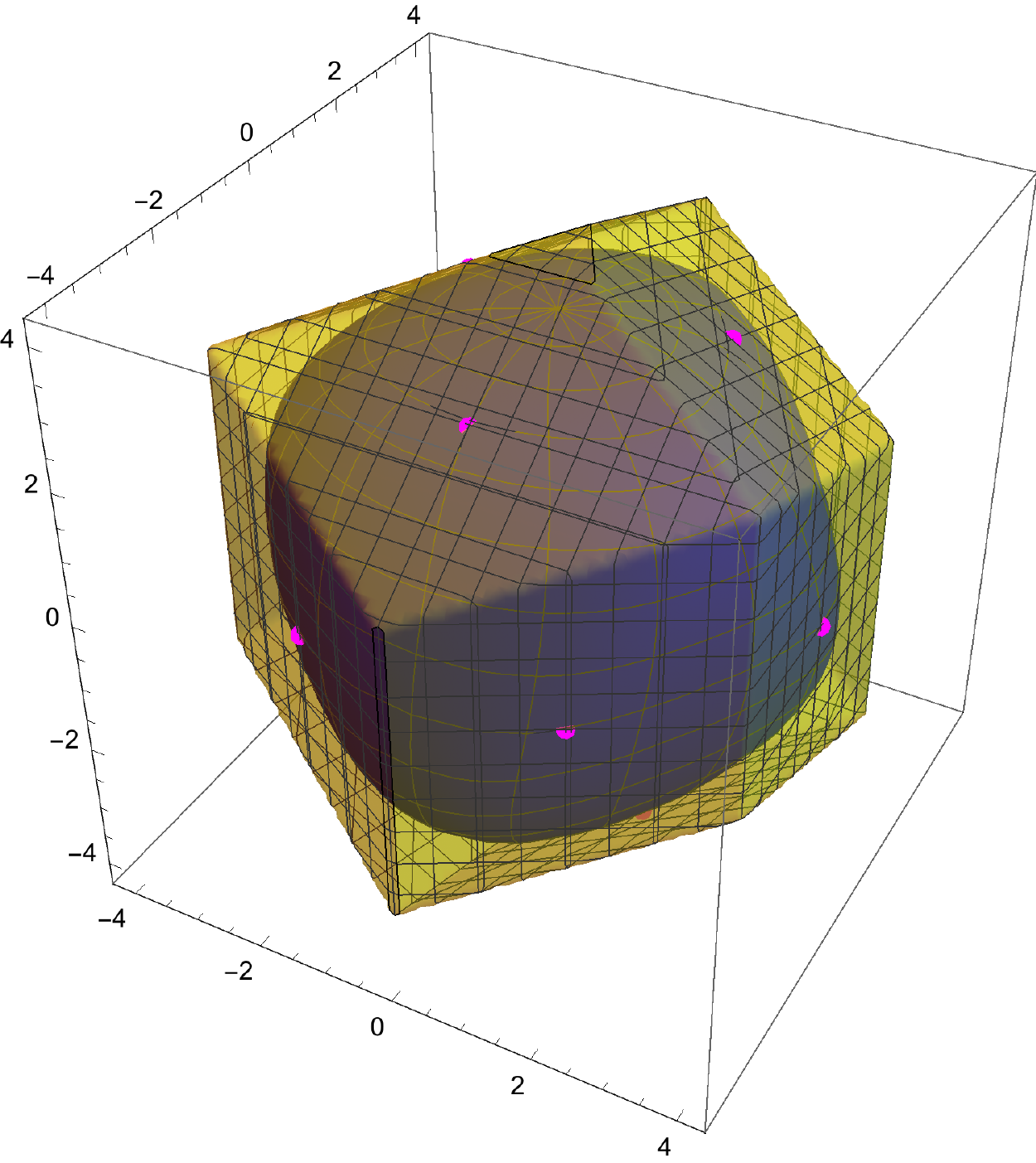}
    (a) The phase-locked region and circumscribed rhombic
      dodecahedron  $\cdb4$
 \end{minipage}
  \hfill
  \begin{minipage}[b]{0.45\textwidth}
\centering
    \includegraphics[width=\textwidth]{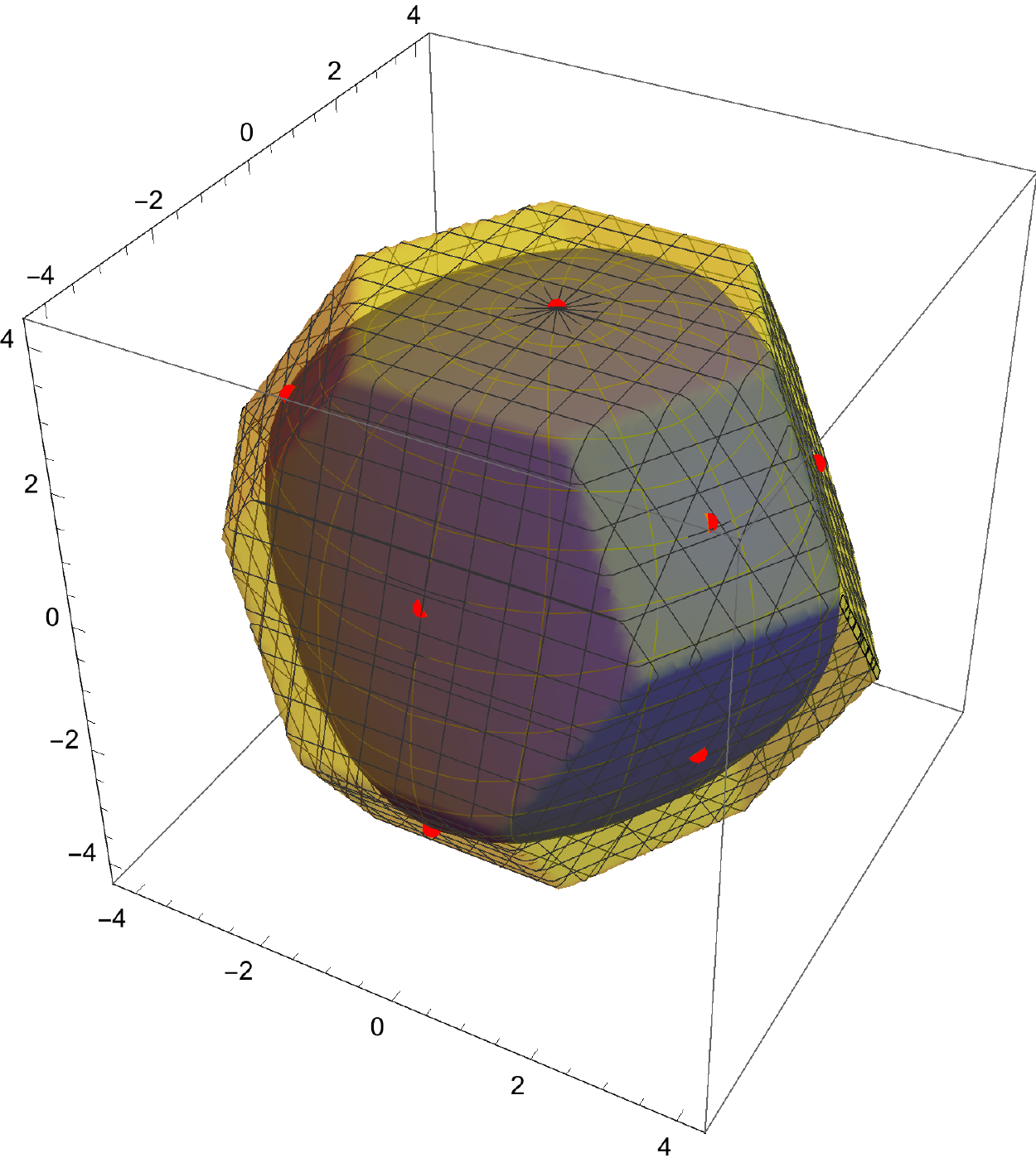}
    (b) The phase-locked region and circumscribed  truncated octahedron   $\ccs4$
 \end{minipage}
\caption{The phase-locked region for $N=4$ and the circumscribing polytopes  $\cdb4$ and $\ccs4$.  The defining points $\rdb4$ and $\rcs4$  are marked by colored vertices.}
\label{fig:circumscribed4}
\end{figure}

\begin{figure}[!tbp]
  \centering
  \begin{minipage}[b]{0.45\textwidth}
    \includegraphics[width=\textwidth]{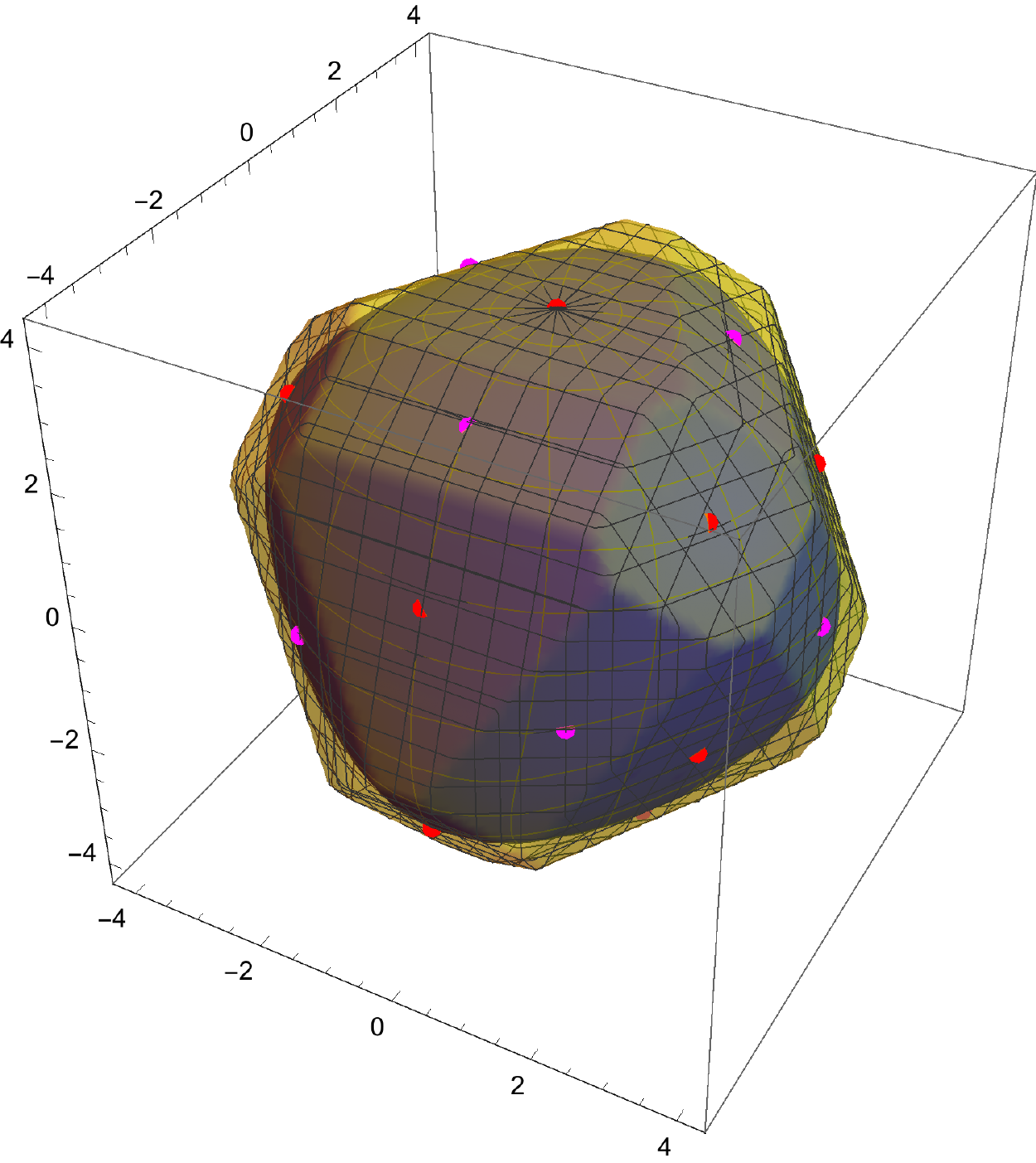}
    \caption{The phase-locked region and circumscribed polytope
      $\cdb4 \cap \ccs4$ }
\label{fig:Intersect}
 \end{minipage}
  \hfill
  \begin{minipage}[b]{0.45\textwidth}
    \includegraphics[width=\textwidth]{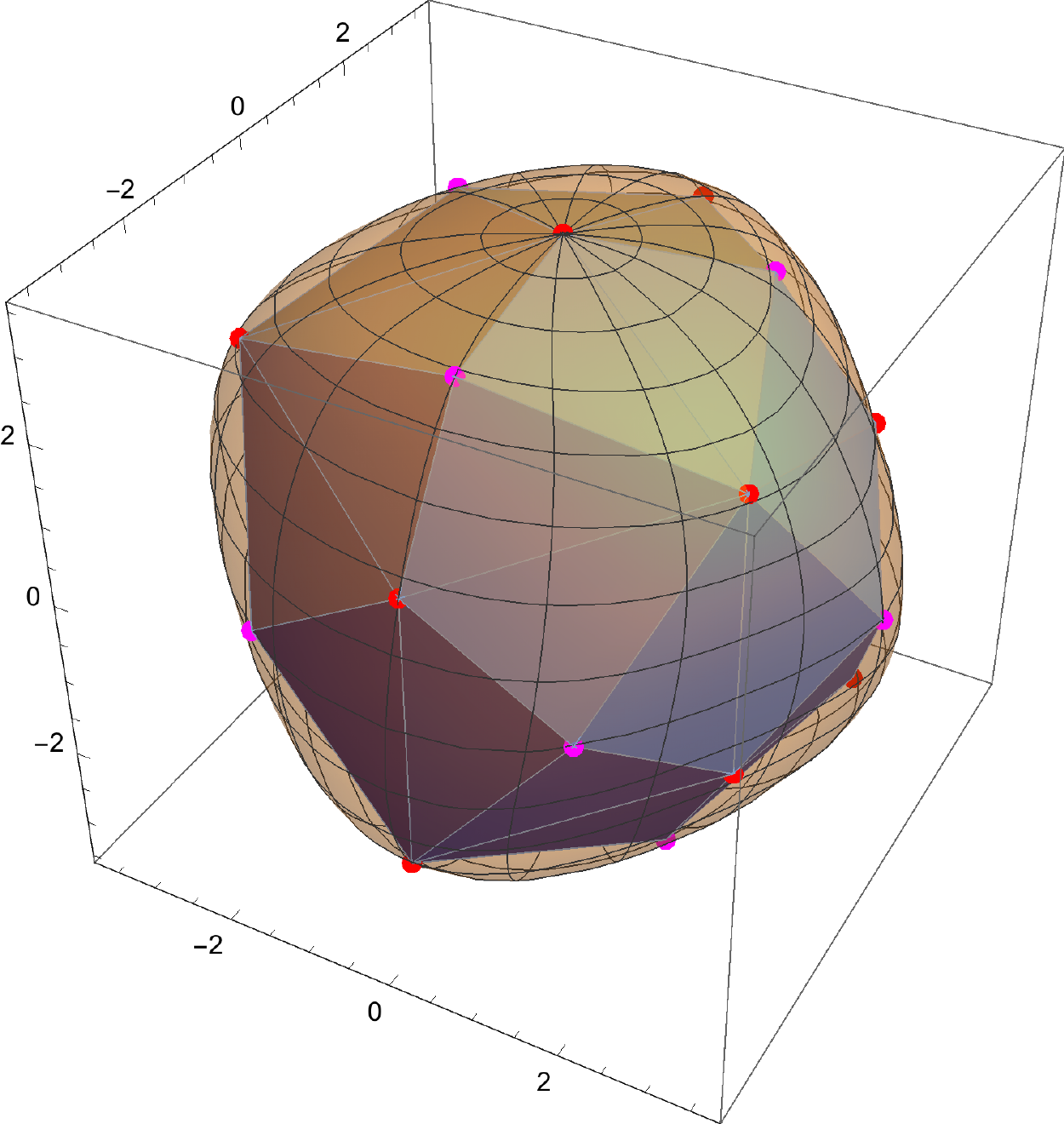}
    \caption{The phase-locked region and inscribed polytope
      $\inscr(\rcs4\cup \rdb4)$ }
\label{fig:Infimal}
\end{minipage}
\end{figure}

Finally we conclude this section by giving the volumes of these
polyhedra as a function of $N$. This will be useful since it gives
some sense of which conditions are in some sense the best --- the best
necessary condition ($\circscr$) is the one with the smallest volume, while
the best sufficient condition ($\inscr$) is the one with the largest
volume.

\begin{prop}\label{prop:volume}
If $\vol_{N-1}(P)$ denotes the $(N-1)$ dimensional Lebesgue volume of a polytope $P \subseteq \R^N_0$ then
the polytopes considered here have the following volumes.
\begin{align*}
&\vol_{N-1}\left(\idb N\right) = N^{N-\frac12} &\\
&\vol_{N-1}\left(\cdb N\right) = 2^{N-1}N^{N-\frac32} &\\
&\vol_{N-1}\left(\ccs N\right) = N^{N-\frac12} \left(\frac{2
  \tau_N}{N}\right)^{N-1}\approx e^{-2} 2^{N-1} N^{N-\frac12}(1+O(N^{-1}))&\\
&\vol_{N-1}\left(\ics N\right) = \frac{\sqrt{N}
  (2(N-1))!}{((N-1)!)^3} (\tau_N)^{N-1}\approx \frac{2^{2N-\frac32}}{2
                                         \pi \sqrt{N}} e^{N-2}(1 + O(N^{-1}))&
\end{align*}
In particular, we have asymptotic bounds for the volume of $\mc D_N$:
\[
\vol_{N-1}\left(\ics N\right)  \le  \vol_{N-1}\left(\idb N\right) \leq \vol_{N-1}\left(\mc D_N\right) \leq  \vol_{N-1}\left(\cdb N\right) \leq \vol_{N-1}\left(\ccs N\right).
\]
Numerically we have found that this order seems to hold for all $N \geq 4$.
\end{prop}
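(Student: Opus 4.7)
The plan is to identify each of the four polytopes with a classical convex body (a projected cube, a scaled permutohedron, or a scaled $A_{N-1}$ root polytope) and then to invoke or briefly derive the corresponding classical volume formula; the four claims will then reduce to three genuinely distinct computations, together with a scaling argument linking $\ccs N$ to $\idb N$.

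For $\idb N$ I would first observe that its defining vertices $N\chi_S-|S|\mathbf{1}$, together with $0$, are exactly $N$ times the orthogonal projections of the vertices of $[0,1]^N$ onto $\R^N_0$. Since this projection $\pi\colon z\mapsto z-\bar z\,\mathbf{1}$ is linear, and since $0\in\idb N$ by the balancedness of $\rdb N$, one concludes $\idb N=N\cdot\pi([0,1]^N)$. Cauchy's projection formula, applied to the unit cube (whose $2N$ facets are unit $(N-1)$-cubes with normals $\pm e_i$) and the hyperplane normal $\hat n=\mathbf 1/\sqrt N$, gives $\vol_{N-1}(\pi([0,1]^N))=\sum_{i=1}^N|\ip{e_i}{\hat n}|=\sqrt N$; rescaling yields $\vol_{N-1}(\idb N)=N^{N-1/2}$. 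The same argument also shows $\idb N=\{y\in\R^N_0:\max_i y_i-\min_j y_j\le N\}$. For $\ccs N$, a direct support-function calculation on $x\in\rcs N$ (using $\norm x^2=2\tau_N^2$ and $\ip yx=\tau_N(y_i-y_j)$) gives $\ccs N=\{y\in\R^N_0:y_i-y_j\le 2\tau_N\ \forall i,j\}=(2\tau_N/N)\,\idb N$, so $\vol_{N-1}(\ccs N)=(2\tau_N/N)^{N-1}N^{N-1/2}$.

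For $\cdb N$, the support-function computation with $x=N\chi_S-|S|\mathbf 1$ reduces, using $\sum y_i=0$ to rewrite $\ip yx=N\sum_{i\in S}y_i$ and $\norm x^2=N|S|(N-|S|)$, to the description $\cdb N=\{y\in\R^N_0:\sum_{i\in S}y_i\le |S|(N-|S|)\ \forall S\}$. This is the base polytope of the symmetric submodular function $f(S)=|S|(N-|S|)$, so Edmonds' greedy algorithm produces its vertices as all permutations of $(N-1,N-3,\dots,-(N-3),-(N-1))$, which is exactly twice the centered standard permutohedron. Invoking the classical formula $\vol_{N-1}(\conv \Sym(1,2,\dots,N))=N^{N-2}\sqrt N$ (provable, for instance, by triangulating the permutohedron into $N!$ simplices with volumes tracked by the Eulerian numbers) and scaling by $2^{N-1}$ gives $\vol_{N-1}(\cdb N)=2^{N-1}N^{N-3/2}$.

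For $\ics N$, since $\rcs N=\tau_N\cdot\Sym(e_1-e_2)$ is $\tau_N$ times the $A_{N-1}$ root system, $\ics N=\tau_N\cdot\conv\{e_i-e_j:i\neq j\}$. I would then invoke the classical identity $\vol_{N-1}(\conv\{e_i-e_j\})=\tfrac{\sqrt N}{(N-1)!}\binom{2(N-1)}{N-1}$, proved by exhibiting a unimodular triangulation of the root polytope into $\binom{2(N-1)}{N-1}$ simplices of equal $(N-1)$-volume $\sqrt N/(N-1)!$ (indexed, for instance, by lattice paths or non-crossing trees, following Gelfand--Graev--Postnikov and Postnikov). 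Scaling by $\tau_N^{N-1}$ yields the claimed expression. The asymptotic formulas then follow from Stirling's approximation combined with $\tau_N=N-2+O(N^{-1})$; for example, $(2\tau_N/N)^{N-1}=(2-4/N+O(N^{-2}))^{N-1}\sim 2^{N-1}e^{-2}$. Finally, the inequality chain combines the polytope inclusions $\idb N\subseteq\mc D_N\subseteq\cdb N$ with the asymptotic comparisons $\vol_{N-1}(\ics N)\le \vol_{N-1}(\idb N)$ and $\vol_{N-1}(\cdb N)\le \vol_{N-1}(\ccs N)$, each obtained by dividing the closed-form expressions and using $\tau_N\sim N$. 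The hardest step is the $\ics N$ case: unlike the other three, which reduce cleanly to a projected cube or to a scaling of the permutohedron, the root polytope volume requires a genuine combinatorial triangulation and is the principal technical obstacle.
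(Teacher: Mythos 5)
Your proof is correct but proceeds via a genuinely different route from the paper for three of the four formulas. The paper simply cites Conway and Sloane for $\vol_{N-1}(\idb N)$, $\vol_{N-1}(\cdb N)$, and $\vol_{N-1}(\ccs N)$, identifying these polytopes as (scaled) Voronoi cells of the $A_{N-1}$ and $A_{N-1}^*$ lattices, and then devotes its appendix exclusively to $\ics N$ via Postnikov's Theorem 3.2, expanding his polynomial in the coordinates of $(1,0,\dots,0,-1)$ to reach the sum $\sum_j\binom{N-1}{j}^2 = \binom{2(N-1)}{N-1}$, with the $\sqrt N$ coming from the Gram determinant of the $A_{N-1}$ generators. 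You instead give self-contained derivations: the Cauchy projection formula applied to the cube for $\idb N$ (this also reproves the spread-norm characterization $\max_i y_i - \min_j y_j \le N$ in passing), the direct scaling identity $\ccs N = (2\tau_N/N)\,\idb N$, and the recognition of $\cdb N$ as twice the centered standard permutohedron via Edmonds' greedy vertices of the base polytope of $f(S)=|S|(N-|S|)$, plus the classical $N^{N-2}\sqrt N$ volume. These are all sound (I checked the numerics against the paper's $N=3,4$ examples: $9\sqrt3$, $12\sqrt3$, etc.), and they have the advantage of not relying on the reader's access to the lattice-theoretic computations in Conway and Sloane. For $\ics N$ you invoke the unimodular-triangulation count $\binom{2(N-1)}{N-1}$ for the $A_{N-1}$ root polytope rather than Postnikov's polynomial formula; this is a cosmetic difference since both ultimately rest on the Gelfand--Graev--Postnikov circle of results. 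The one place you are slightly terse is the assertion $\vol_{N-1}(\Pi_{N-1}) = N^{N-2}\sqrt N$: since your whole approach is to avoid outsourcing the key volumes, you should at least indicate that $N^{N-2}$ is the normalized volume of the standard permutohedron (the spanning-tree / parking-function count) and that $\sqrt N$ is the covolume correction, which is exactly the same Gram-matrix argument the paper makes explicitly at the end of its appendix. Your derivation of the final inequality chain and the Stirling asymptotics is the same as the paper's.
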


 \begin{remark}
All of these except the last are computed in Conway and Sloane, as they are up to scaling
the volumes of the Voronoi cells of $A_{N-1}$ and $A_{N-1}^*$. We
compute the volume of the last in an appendix, using the
combinatorial results of Postnikov \cite{Postnikov.2009}.
\end{remark}

Note that for large $N$ the polytope
$\ics N$ has substantially smaller volume than the polytope
$\idb N$. This makes a certain amount of intuitive sense: one
expects that having a larger frequency difference leads to the loss of
phase-locking, so one expects that norm should behave like an
$L_\infty$ norm. While the polytope  $\idb N$ is the unit ball of
a norm that is closely related to the $L_\infty$ norm, $\ics N$ is the unit ball of a norm related to the $L_1$
norm. Thus it is perhaps not surprising that the estimate it gives is
quite conservative: For a ``random'' vector in ${\mathbb R}^n$ the
$L_\infty$ norm is smaller than the $L_1$ norm by a factor of roughly
$N$.
Similarly here we see that the volume of the $L_1$-like ball is smaller
than the volume of the $L_\infty$-like ball by a factor of $N^{N-1}$.
 We do, however, stress again that neither polytope
is completely contained within the other.

\section{Natural Norms and Merging Polytopes}

In the last section given a collection $R$ of points on the boundary
of the phase-locked region we defined two polytopes $\inscr(R)$ and $\circscr(R)$ which
were contained in and contained the stable region respectively. Given
a combinatorial description of a polytope it is not always easy to
decide if a given point is contained in the polytope, so our
goal in this section is to express these polytopes as the unit balls
of certain norms. From these representations it will be relatively
straightforward to check if any given frequency vector lies in the
polytope.

We will also show how to ``add'' collections of points: given two sets
of points $R$ and $R'$ on the boundary of the phase-locked region
we show how to relate $\inscr({R \cup R'})$ to $\inscr(R)$ and $\inscr({R'})$, and
analogously for $\circscr(R \cup R')$.

\begin{lem}\label{lem:cr}
Let $\circscr(R)$ be a polytope containing the origin defined as the intersection of a collection of
half-spaces $H_x$ with normal vectors derived from the points $x \in R$. If the set $R$ has
the property that $x \in R \iff -x \in R$ then $\circscr(R)$ is the unit ball
of some (semi-)norm $\circnorm\cdot R$; more specifically,
\[
\circscr(R) = \{ y \in {\mathbb R}^n  \vert \circnorm y R \leq  1 \},
\]
where
\begin{equation*}
  \circnorm y R := \max_{x \in R} \frac{\langle y,x
  \rangle}{\langle x,x \rangle},
\end{equation*}
and where $\ip \cdot\cdot$ denotes the standard Euclidean inner product.
If the number of linearly independent vectors in $R$ is at least $N$
then the semi-norm is actually a norm.
\label{lem:Normals}
\end{lem}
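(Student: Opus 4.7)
The plan is to reduce the lemma to two steps: first, rewrite the set $\circscr(R)$ explicitly as the sublevel set of the candidate function $\circnorm{\cdot}{R}$; second, verify that $\circnorm{\cdot}{R}$ is a seminorm (and, under the spanning hypothesis, a norm) directly from its defining formula. I expect the rewriting step to be the most substantive; once that is done, the remaining properties are essentially mechanical.

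First I would pin down the half-space $H_x$ precisely. Since $x\in R$ lies on $\partial\Omega$ and $\bdy H_x$ is the hyperplane through $x$ with normal $x$, and since $H_x$ must contain the origin, one reads off
\begin{equation*}
H_x = \bigl\{y\in\R^N : \ip{y}{x}\le\ip{x}{x}\bigr\}.
\end{equation*}
Intersecting over $x\in R$ gives $\circscr(R) = \{y : \ip{y}{x}/\ip{x}{x}\le 1\text{ for all }x\in R\}$, which is exactly $\{y : \circnorm{y}{R}\le 1\}$. (We may safely discard any $x=0$ from $R$ since then $H_x = \R^N$ and the term would not appear in the maximum; the only subtle point is to note that $R$ lies on $\partial\Omega$ with $0\in\operatorname{int}\Omega$, hence $x\ne 0$.)

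Next I would check the three seminorm axioms from the explicit formula. Nonnegative homogeneity $\circnorm{\lambda y}{R} = \lambda\,\circnorm{y}{R}$ for $\lambda\ge 0$ is immediate by linearity of the inner product in $y$. Symmetry $\circnorm{-y}{R} = \circnorm{y}{R}$ is where the balanced hypothesis $x\in R\Leftrightarrow -x\in R$ enters: reindexing $x\mapsto -x$ in the maximum and using $\ip{-x}{-x}=\ip{x}{x}$ converts $\ip{-y}{x}$ into $\ip{y}{-x}$ and then back into $\ip{y}{x}$. For the triangle inequality, fix $y_1,y_2$; for every $x\in R$,
\begin{equation*}
\frac{\ip{y_1+y_2}{x}}{\ip{x}{x}} = \frac{\ip{y_1}{x}}{\ip{x}{x}} + \frac{\ip{y_2}{x}}{\ip{x}{x}} \le \circnorm{y_1}{R}+\circnorm{y_2}{R},
\end{equation*}
and taking the maximum over $x\in R$ on the left gives the desired bound.

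Finally, for the upgrade to a norm, suppose $\circnorm{y}{R}=0$. Then $\ip{y}{x}\le 0$ for every $x\in R$, and by symmetry of $R$ also $\ip{y}{-x}\le 0$, so $\ip{y}{x}=0$ for all $x\in R$. If $R$ contains $N$ linearly independent vectors they span $\R^N$, forcing $y=0$, which yields the norm property. (One could alternatively invoke Theorem~\ref{thm:simon} by observing that $\circscr(R)$ is closed, convex, balanced, and — because $R$ spans — absorbing, but the direct computation gives the explicit formula the lemma asserts, so I would present it directly.)
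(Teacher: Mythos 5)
Your proof is correct and follows essentially the same route as the paper: identify the half-space inequality, read off the max as defining the sublevel set, then verify the seminorm axioms directly (with the balancedness of $R$ doing the work for symmetry, and the spanning hypothesis giving definiteness). The only cosmetic difference is that you package homogeneity as ``nonnegative homogeneity plus symmetry,'' whereas the paper handles $\alpha>0$ and $\alpha<0$ separately; your aside about discarding $x=0$ is a nice, if minor, point of care the paper leaves implicit.
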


\begin{proof}
The half-spaces containing the polytope $\circscr(R)$ are defined by
\[
\frac{\langle y,x
  \rangle}{\langle x,x \rangle} \leq 1,
\]
so the set of all $y$ such that $\max_{x\in R} \frac{\langle y,x
  \rangle}{\langle x,x \rangle} \leq 1$ is clearly equivalent to the
polytope $\circscr(R)$.  It remains only to check that this defines a
norm.  First note that $\circnorm 0 R=0$ by definition.  Now let $\alpha>0$.  It is clear that
\begin{equation*}
  \arg\max_{x\in \R} \frac{\ip {\alpha y} x}{\ip x x} =   \arg\max_{x\in \R} \frac{\alpha\ip y x}{\ip x x}
\end{equation*}
by scaling, and thus $\circnorm{\alpha y}R = \alpha\circnorm{y}R$.  Now, if $\alpha<0$, then note that
\begin{equation*}
  \frac{\ip{\alpha y}{-x}}{\ip{-x}{-x}} = \frac{\ip{(-\alpha)y}{x}}{\ip x x},
\end{equation*}
(and $-x\in R\iff x\in R$ by assumption) and thus $\circnorm{\alpha y}R = \alpha\circnorm{y}R$.  Putting these two together gives
\begin{equation*}
  \circnorm{\alpha y}R = \av{\alpha}\circnorm{y}R,
\end{equation*}
 so the homogeneity property holds.

 We also compute
 \begin{align*}
&\circnorm{y+y'}R=\max_{x \in R} \frac{\langle y+y',x
  \rangle}{\langle x,x \rangle} \\&=  \max_{x \in R} \left(\frac{\langle \alpha y,x
  \rangle}{\langle x,x \rangle} + \frac{\langle \alpha y,x
  \rangle}{\langle x,x \rangle} \right) \leq \max_{x \in R} \frac{\langle \alpha y,x
  \rangle}{\langle x,x \rangle} +\max_{x \in R} \frac{\langle \alpha y',x
  \rangle}{\langle x,x \rangle} = \circnorm{y}R + \circnorm{y'}R,
\end{align*}
so the triangle inequality holds. If there are at least $N$
independent vectors in $R$ then $R$ contains a basis so if $y$ is
non-zero there is at least one element of $R$ with a non-zero
projection on $y$, and hence at least one element with a positive
projection on $y$.
\end{proof}

\begin{lem}Given two collections of boundary points we have that
\begin{equation*}
  \circscr(R\cup R') = \circscr (R) \cap \circscr(R')
\end{equation*}
and
\begin{equation*}
  \circnorm{\cdot}{R\cup R'} = \max\left(\circnorm\cdot R,\circnorm \cdot {R'}\right).
\end{equation*}\end{lem}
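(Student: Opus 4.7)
The plan is to observe that both claims follow essentially by unwinding the definitions introduced earlier in the section, with no geometric obstacle beyond set-theoretic and max-over-union identities.

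First I would prove the polytope identity. Recall that $\circscr(S)$ was defined as the intersection $\bigcap_{x\in S} H_x$ of the supporting half-spaces $H_x$ (the closed half-space containing the origin with boundary normal to $x$). Since intersection commutes with taking unions of index sets,
\begin{equation*}
\circscr(R\cup R') \;=\; \bigcap_{x\in R\cup R'} H_x \;=\; \Bigl(\bigcap_{x\in R} H_x\Bigr) \cap \Bigl(\bigcap_{x\in R'} H_x\Bigr) \;=\; \circscr(R)\cap\circscr(R').
\end{equation*}
This gives the first equality directly.

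Next I would prove the seminorm identity, assuming that $R$ and $R'$ each satisfy the symmetry hypothesis $x\in R\iff -x\in R$ (and likewise for $R'$) of Lemma~\ref{lem:cr}, so that $R\cup R'$ does as well and the formula $\circnorm{\cdot}{S}=\max_{x\in S}\ip{y}{x}/\ip{x}{x}$ applies to each of $R$, $R'$, $R\cup R'$. The argument is then just that the maximum of a function over a union is the maximum of the maxima over each piece:
\begin{equation*}
\circnorm{y}{R\cup R'} \;=\; \max_{x\in R\cup R'} \frac{\ip{y}{x}}{\ip{x}{x}} \;=\; \max\Bigl(\max_{x\in R}\tfrac{\ip{y}{x}}{\ip{x}{x}},\ \max_{x\in R'}\tfrac{\ip{y}{x}}{\ip{x}{x}}\Bigr) \;=\; \max\bigl(\circnorm{y}{R},\circnorm{y}{R'}\bigr).
\end{equation*}

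There is essentially no hard step here; both statements are formal consequences of the definitions. The only small thing to flag is the compatibility between the two identities: the pointwise maximum of two seminorms is again a seminorm, and its closed unit ball is precisely the intersection of the two unit balls, so the two parts of the lemma are consistent with each other and with Lemma~\ref{lem:cr}. I would mention this briefly for emphasis but would not dwell on it, since it adds no substantive content beyond the two displays above.
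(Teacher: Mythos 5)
Your proof is correct and follows essentially the same route as the paper's: the first identity is the set-theoretic fact that intersection distributes over unions of index sets, and the second is the max-over-union identity applied to the explicit formula from Lemma~\ref{lem:cr}. Your closing remark about the compatibility of the two parts is a reasonable sanity check but adds nothing beyond what the two displays already establish.
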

\begin{proof}The first statement follows from the definition of $\circscr(R)$ as an intersection of half-planes.  The second is a corollary of Lemma~\ref{lem:cr}:
\begin{equation*}
  \circnorm y {R\cup R'} = \max_{x\in R\cup R'} \frac{\langle y,x
  \rangle}{\langle x,x \rangle} = \max\left(\max_{x\in R} \frac{\langle y,x
  \rangle}{\langle x,x \rangle},\max_{x\in R'} \frac{\langle y,x
  \rangle}{\langle x,x \rangle}\right) = \max\left(\circnorm y R,\circnorm y {R'}\right).
\end{equation*}
\end{proof}

Next we consider the case of the $\inscr$ polytopes. In general, the characterization
of the norm in terms of the vertices does not seem to be as nice as the
characterization of the  norm in terms of the normals to the
supporting half-spaces, but for very special polytopes (permutahedra)
a classical result of Rado gives a characterization:

\begin{thm}[Rado\cite{Rado.1952}]
Consider the permutahedron given by all convex combinations of permutations
of a vector $v$. We can assume that the coordinates of $v$ are ordered
$v_1\geq v_2 \geq \ldots v_{n-1} \geq v_n$. Given an arbitrary vector $x$ the
vector is in the permutahedron if and only if all permutations of $x$ are in
the permutahedron, so we can assume without loss of generality that
$x$ is ordered the same way. Then  $x$ is in the permutahedron
if and only if the inequalities
\[
\sum_{j=1}^k x_j \leq \sum_{j=1}^k v_j \quad \forall k\in 1\ldots n-1
\]
and the equality
\[
\sum_{j=1}^n x_j = \sum_{j=1}^n v_j.
\]
hold.
\end{thm}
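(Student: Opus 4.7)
The plan is to handle the two directions separately, with the substance concentrated in sufficiency. The \emph{necessity} direction is direct. If $x$ lies in the permutahedron, write $x = \sum_i \lambda_i \pi_i(v)$ as a convex combination of permutations of $v$. The equality $\sum_j x_j = \sum_j v_j$ is immediate since every permutation preserves the coordinate sum. For the inequality, observe that when $x$ is sorted decreasingly, $\sum_{j=1}^k x_j = \max_{\av{S}=k} \sum_{j \in S} x_j$, while for each permutation $\pi_i$ one has $\sum_{j=1}^k (\pi_i(v))_j \leq \sum_{j=1}^k v_j$ since $v$ is already sorted. Linearity of the coordinate sum then yields the claim.

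For \emph{sufficiency}, my approach is induction via a \emph{T-transform} (pinching) argument. Suppose $x \neq v$ are both sorted decreasingly, $\sum_j x_j = \sum_j v_j$, and the majorization inequalities hold. Then one may pick indices $i < j$ with $v_i > x_i$ and $v_j < x_j$. Define $v' \in \R^n$ by $v'_i = v_i - t$, $v'_j = v_j + t$, with all other coordinates fixed, choosing $t > 0$ maximally subject to preserving decreasing order and to $v'_i \geq x_i$, $v'_j \leq x_j$. Then $v'$ is a convex combination of $v$ and the transposition $(ij)\cdot v$, so it lies in the permutahedron of $v$; moreover, the maximality of $t$ forces either $v'_i = x_i$ or $v'_j = x_j$, strictly increasing the number of coordinates where the working vector agrees with $x$. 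Iterating finitely many times reduces to the case where the working vector equals $x$, at which point unwinding the nested convex combinations expresses $x$ as a single convex combination of permutations of the original $v$.

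The main obstacle I anticipate is the bookkeeping: verifying that each T-transform preserves both decreasing order and the majorization inequalities relative to $x$ (the prefix sums shift only on the range $i \leq k < j$, and only downward, so this reduces to showing that the shifted prefix sums stay above those of $x$), and checking that the procedure genuinely terminates. An alternative, cleaner route would be to invoke the Hardy-Littlewood-Polya theorem, which states that the stated hypotheses are equivalent to $x = Dv$ for some doubly stochastic matrix $D$, followed by the Birkhoff-von Neumann theorem decomposing $D$ as a convex combination of permutation matrices. This bypasses the explicit combinatorics but effectively outsources the same content to two classical theorems.
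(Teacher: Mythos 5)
The paper does not prove this theorem; it is cited directly from Rado's 1952 paper and used as a black box, so there is no in-house proof to compare against. Your T-transform plan is the classical Hardy--Littlewood--P\'olya argument and the necessity half is fine, but the sufficiency half as written has a real gap exactly where you suspect trouble: the claim that maximality of $t$ forces $v_i' = x_i$ or $v_j' = x_j$ is false if you pick the pinching indices arbitrarily, because the decreasing-order constraint can bind first. Concretely, take $v=(10,8,0)$, $x=(6,6,6)$, and pinch at $i=1$, $j=3$: the constraint $v_1' \ge v_2' = 8$ caps $t$ at $2$ and produces $v'=(8,8,2)$, which agrees with $x$ in no coordinates, so your stated induction measure (number of agreeing coordinates) does not increase and the procedure need not terminate.

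The fix is a specific choice of pinching pair: let $i$ be the \emph{largest} index with $v_i > x_i$ and $j$ the \emph{smallest} index $>i$ with $v_j < x_j$. Then $v_l = x_l$ for $i < l < j$, and one checks that with $t = \min(v_i - x_i,\, x_j - v_j)$ the decreasing-order constraints on $v'$ are automatically satisfied: the only nontrivial case is $j=i+1$, where $v_i - v_j = (v_i - x_i) + (x_i - x_j) + (x_j - v_j) \ge 2t$, so $v_i' \ge v_j'$ holds. With this choice $t$ is limited only by the two agreement constraints, at least one of which is saturated, and the coordinate-agreement count strictly increases, so the iteration terminates in at most $n-1$ steps. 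The rest of your argument (each $v'$ lies in the permutahedron of $v$ since $0 \le t < v_i - v_j$, the permutahedron of $v'$ is contained in that of $v$ by permutation-invariance, and the prefix-sum majorization inequalities pass to $v'$) goes through. The alternative route through the Hardy--Littlewood--P\'olya characterization and Birkhoff--von~Neumann is of course also valid but, as you note, simply relocates the same combinatorial work.
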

The polyhedra formed by $\rcs{N,j}$ are all permutahedra and this theorem will enable us to define a norm whose unit ball is $\ics{N,j}$.  As $\idb N$ is not a permutahedron, we will need a slightly different approach to determine the associated norm.  To do so, we will need a more general result.

For a general set $R$, once we have constructed $\inscr(R)$ it remains to show how to combine collections of points --- in other words how to relate $\inscr({R \cup R'})$ to $\inscr(R)$ and $\inscr({R'})$. It is clear from the definition of $\inscr(R)$ as the convex hull of the points in $R$ that we have $\inscr(R\cup R') = \conv(\inscr(R),\inscr(R'))$, where $\conv(A,B)$ denotes the convex hull of $A \cup B$. On the level of the norms this can be expressed as follows:

\begin{prop}
Let $R$ and $R'$ be any collection of points such that the convex hulls $\inscr(R)$ and $\inscr(R')$ are balanced and absorbing.  {(Recall Definition~\ref{def:balanced-absorbing}.)}  Then the norm corresponding to $\inscr({R\cup R'})$ is given by the
infimal convolution of the norms corresponding to $\inscr(R)$ and $\inscr({R'})$, i.e. if we define:
\begin{equation}\label{eq:defofRR'}
  \inscnorm{y}{R\cup R'} := \inf _x \left(\inscnorm x R + \inscnorm{y-x}{R'}\right)
\end{equation}
then $\inscr(R\cup R')$ is the unit ball under $\inscnorm\cdot{R\cup R'}$.
\end{prop}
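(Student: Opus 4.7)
The plan is to identify each norm with the Minkowski functional of its defining polytope (as furnished by Theorem~\ref{thm:simon}) and then verify the infimal-convolution identity directly by a double inequality. The starting observation is that, since convex hull is idempotent and distributes over unions of convex sets, we have the identity
\[
\inscr(R\cup R') = \conv\bigl(\inscr(R)\cup \inscr(R')\bigr).
\]
Because $\inscr(R)$ and $\inscr(R')$ are themselves convex, every element $z\in\inscr(R\cup R')$ therefore admits a representation $z = t a + (1-t)b$ with $a\in\inscr(R)$, $b\in\inscr(R')$, and $t\in[0,1]$. Both $\inscr(R)$ and $\inscr(R')$ are closed, balanced, absorbing, and bounded (being convex hulls of finite sets), so their Minkowski functionals are genuine norms, and so is the Minkowski functional of $\inscr(R\cup R')$.

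For the upper bound, $\inscnorm{y}{R\cup R'}\leq \inf_x\bigl(\inscnorm{x}{R}+\inscnorm{y-x}{R'}\bigr)$, I would fix a decomposition $y = x+(y-x)$, set $\alpha=\inscnorm{x}{R}$ and $\beta=\inscnorm{y-x}{R'}$, and assume for now $\alpha,\beta>0$. Closedness gives $x/\alpha\in\inscr(R)$ and $(y-x)/\beta\in\inscr(R')$, and with $\lambda=\alpha+\beta$ one writes
\[
\frac{y}{\lambda} = \frac{\alpha}{\lambda}\cdot\frac{x}{\alpha} + \frac{\beta}{\lambda}\cdot\frac{y-x}{\beta},
\]
exhibiting $y/\lambda$ as a convex combination of elements of $\inscr(R)\cup\inscr(R')$, hence an element of $\inscr(R\cup R')$. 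Thus $\inscnorm{y}{R\cup R'}\leq \alpha+\beta$, and taking the infimum over $x$ yields the bound.

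For the reverse inequality, suppose $y/\lambda\in\inscr(R\cup R')$ and decompose $y/\lambda = ta+(1-t)b$ as above. Setting $x := \lambda t a$ gives $y-x = \lambda(1-t)b$, so by the defining property of the Minkowski functional $\inscnorm{x}{R}\leq \lambda t$ and $\inscnorm{y-x}{R'}\leq \lambda(1-t)$, whence $\inscnorm{x}{R}+\inscnorm{y-x}{R'}\leq \lambda$. Infimizing over admissible $\lambda$ produces $\inf_x\bigl(\inscnorm{x}{R}+\inscnorm{y-x}{R'}\bigr)\leq \inscnorm{y}{R\cup R'}$.

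The main subtlety, and the only place that requires care rather than pure bookkeeping, is the handling of the degenerate cases $\alpha=0$ or $\beta=0$ in the upper-bound argument: one needs that $\inscnorm{x}{R}=0$ forces $x=0$, which follows because $\inscr(R)$ is bounded so that $\bigcap_{\lambda>0}\lambda\inscr(R)=\{0\}$. Once this is noted the degenerate decompositions collapse to the trivial one $y = 0 + y$ (or $y+0$), which is consistent with $\inscr(R),\inscr(R')\subseteq\inscr(R\cup R')$. Everything else is standard manipulation of Minkowski functionals and invokes no machinery beyond Theorem~\ref{thm:simon}.
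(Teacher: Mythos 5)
Your proof is correct and follows essentially the same route as the paper: both directions hinge on the identity $\inscr(R\cup R')=\conv\bigl(\inscr(R)\cup\inscr(R')\bigr)$, with one inequality obtained by plugging $x=\lambda t a$ (the paper's $x=\alpha v$) into the infimum, and the other by rescaling a near-optimal decomposition $y=x+(y-x)$ into an exact convex combination of boundary points of $\inscr(R)$ and $\inscr(R')$. Your treatment is slightly more careful than the paper's in two cosmetic respects — you normalize by $\lambda=\alpha+\beta$ to produce a genuine convex combination rather than a sub-convex one, and you explicitly justify the degenerate case $\alpha=0$ via boundedness rather than attainment of the infimum — but the underlying argument is the same.
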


\begin{proof}
Recall from Theorem~\ref{thm:simon} that  since  $\inscr(R)$ and $\inscr(R')$ are balanced and absorbing, they
each have the property that they are the closed unit ball of a seminorm, i.e. that there exist $\inscnorm\cdot R$ and $\inscnorm\cdot{R'}$ such that
\begin{equation}
  \inscr(R) = \left\{y\in \R^n : \inscnorm y R \le 1\right\},\quad   \inscr(R') = \left\{y\in \R^n : \inscnorm y {R'} \le 1\right\}.
\end{equation}

Let $v\in \inscr(R), v'\in \inscr(R')$, and assume that $y$ is a convex combination $y=\alpha v + (1-\alpha) v'$, $\alpha\in[0,1]$.   If we plug $x = \alpha v$ into the infimum we see that
\begin{equation*}
  \inscnorm{y}{R\cup R'} = \inf _x \left(\inscnorm x R + \inscnorm{y-x}{R'}\right) \le \alpha\inscnorm v R + (1-\alpha)\inscnorm v {R'} \le 1.
\end{equation*}
Therefore $\inscr({R\cup R'})$ is contained in the unit ball under $\inscnorm{\cdot}{R\cup R'}$.

Conversely, if $\inscnorm{y}{R\cup R'}\le 1$ then  by definition there exists $x$ such that $\inscnorm x R + \inscnorm {y-x}{R'} \le 1$.   If $x=0$ then $y\in \inscr(R') \subset \inscr(R\cup R')$; similarly, if $x=y$ then $y\in\inscr(R)\subset \inscr(R\cup R')$.  If neither ${x}$ nor $y-x$ is the zero vector we have $$y = \inscnorm x R \frac{x}{  \inscnorm x R} + \inscnorm {y-x} {R'} \frac{y-x}{ \inscnorm {y-x} {R'}}.$$    Thus we have written  $y$ as a convex combination of vectors in $\inscr(R)$ and $\inscr(R')$ and we are done.

\end{proof}

\begin{define}\label{def:spread}
For a vector $y = (y_1,\dots, y_n) \in \R^n$, we define $\ymax, \ymin, \ymaxx l, \yminn l$ as follows:  let $z$ be the vector in $\R^n$ with the entries of $y$ sorted in an increasing fashion (i.e. $z\in \Sym(y)$ and $z_i \le z_{i+1}$), and then define
\begin{equation*}
  \yminn l = z_l,\quad \ymaxx l = z_{n-l}.
\end{equation*}
That is to say, $\yminn l$ is the ``$l$th smallest'' entry of $y$ and $\ymaxx l$ is the ``$l$th largest'' entry of $y$.  We then define $\ymax := \ymaxx 1$ and $\ymin := \yminn1$.
Let us also define the {\bf spread seminorm} of a vector $y$ as $\snorm{y} := \ymax-\ymin$.
We also use the standard notation $\norm y_1 = \sum_{i=1}^N \av{y_i}$ and $\norm y_\infty = \max_{i=1}^N \av{y_i}$.
\end{define}

\begin{prop}\label{prop:norms}
The polytopes $\idb N$, $\ics N$, $\ics{N,j}$, $\cdb N$, $\ccs N$, $\ccs {N,j}$
can be defined in terms of the following norms:
\begin{align*}
\idb N &= \{ y\in\R^N_0 \colon \snorm y \leq N \}&  \\
\ics N &= \{ y\in\R^N_0 \colon  {\norm y}_1   \leq 2\tau_N \} & \\
\ics {N,j} &=\left \{ y\in\R^N_0 \colon  \max\left(j^{-1} \norm y_1 ,  2\Vert y\Vert_\infty\right)  \leq 2\tau_{N,j} \right\} &
                                                                  \\
\cdb N &= \left\{ y\in\R^N_0 \colon  \max_{k \in \{1\ldots N-1\}} \max_{|S|=k}
                \frac{\sum_{l \in S}y_l}{k(N-k)}\leq 1\right\} &\\
\ccs N &= \{ y\in\R^N_0 \colon \snorm y \leq 2 \tau_N\}
                                                                & \\
\ccs{N,j} &= \left\{ y\in\R^N_0 \colon  \sum_{l=1}^j y_{\max,l} -
  \sum_{l=1}^j y_{\min,l} \leq 2 j  \tau_{N,j}\right\} &
\end{align*}
In the last case the norm is given by the sum of the largest $j$
elements minus the sum of the smallest $j$ elements. This is only
defined for $2j\leq N.$ One can also define a norm whose unit ball is the intersection of {\bf ALL} the generalized Chopra-Spong conditions:
\[
\tilde {\ccs N} = \bigcap_{j=1}^{\lfloor \frac{N}{2} \rfloor} \ccs{N,j} =\left\{ y: \max_{k \in \{1\ldots \lfloor\frac{N}{2}\rfloor\}}\max_{|S_1|=k,|S_2|=k} \frac{\sum_{i\in S_1}y_i - \sum_{j\in S_2}y_j}{2 k \tau_{N,k}} \leq 1\right\} .
\]
Recalling that $N-2j \leq \tau_{N,j} \leq N-j$ this formula looks very similar to the one defining $\cdb N$.
\end{prop}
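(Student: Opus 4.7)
The plan is to group the six identities by which tool applies. For the three circumscribed polytopes $\cdb N$, $\ccs N$, and $\ccs{N,j}$, I would apply Lemma~\ref{lem:cr}, which identifies $\circscr(R)$ as the unit ball of $\circnorm y R=\max_{x\in R}\langle y,x\rangle/\langle x,x\rangle$. For $\rcs N$ one has $\langle x,x\rangle=2\tau_N^2$ and $\langle y,x\rangle=\tau_N(y_i-y_j)$, and maximizing over $i,j$ yields $\snorm y/(2\tau_N)$. For $\rcs{N,j}$, $\langle x,x\rangle=2j\tau_{N,j}^2$ and $\langle y,x\rangle=\tau_{N,j}\bigl(\sum_{i\in S_+}y_i-\sum_{i\in S_-}y_i\bigr)$ over disjoint $j$-sets, maximized by choosing $S_+$ (resp.\ $S_-$) to index the $j$ largest (resp.\ smallest) entries. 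For $\rdb N$, writing $\langle y,x\rangle=(N-k)\sum_{i\in S}y_i-k\sum_{i\notin S}y_i$ and using $\sum_i y_i=0$ collapses this to $N\sum_{i\in S}y_i$, while $\langle x,x\rangle=Nk(N-k)$, so the $N$'s cancel to give the stated formula. The description of $\tilde\ccs N$ is then the immediate corollary of the earlier fact that $\circscr(R\cup R')=\circscr(R)\cap\circscr(R')$, with corresponding norm the max of the two norms.

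For $\ics{N,j}$ (with $\ics N$ the case $j=1$) the generating set is a single $S_N$-orbit, so Rado's theorem applies. Sorting both $y$ and $v=\tau_{N,j}(1^{(j)},0^{(N-2j)},(-1)^{(j)})$ in weakly decreasing order produces the piecewise-linear partial-sum profile $\sum_{l=1}^k v_l=\min(k,j,N-k)\tau_{N,j}$. The Rado constraints for $k\le j$ all follow from $y_1\le \tau_{N,j}$, and symmetrically those for $k\ge N-j$ reduce (after using $\sum y=0$) to $-y_N\le \tau_{N,j}$; together these give $\norm y_\infty\le\tau_{N,j}$. The middle block asks $\sum_{l=1}^k y_l\le j\tau_{N,j}$ for all $k\in[j,N-j]$, and since $k\mapsto\sum_{l=1}^k y_l$ is concave in $k$ (the increments $y_{k+1}$ are nonincreasing), its unconstrained global maximum equals $\sum_{y_l>0}y_l=\norm y_1/2$. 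A short casework on whether the number $m$ of strictly positive entries lies in $[j,N-j]$ shows that the middle block, combined with the $L_\infty$ bound, is exactly equivalent to $\norm y_1\le 2j\tau_{N,j}$. Specializing to $j=1$, the $L_1$ bound $\norm y_1\le 2\tau_N$ already forces $\norm y_\infty\le\tau_N$ on $\R^N_0$, so only the $L_1$ condition survives.

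The principal obstacle is $\idb N$, which is \emph{not} a single permutahedron but the convex hull of the union $\bigcup_{k=1}^{N-1}\Sym(v_k)$, so Rado's theorem does not apply directly. The easy containment is clear since each $v_k$ satisfies $\snorm{v_k}=N$ and $\snorm{\cdot}$ is a seminorm. For the reverse, take $y$ sorted decreasingly with $\snorm y\le N$ and set $d_k:=y_k-y_{k+1}\ge 0$. The crucial observation is that the sorted representative $\tilde v_k:=((N-k)^{(k)},(-k)^{(N-k)})\in\rdb N$ has consecutive-difference vector $(0,\ldots,0,N,0,\ldots,0)$ with $N$ in slot $k$, so $\sum_{k=1}^{N-1}(d_k/N)\tilde v_k$ matches $y$ in both consecutive differences and mean, hence equals $y$. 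The nonnegative coefficients have total mass $\snorm y/N\le 1$; to upgrade this to a genuine convex combination I would pad with a balanced pair $\tfrac{1}{2}\rho\tilde v_1+\tfrac{1}{2}\rho(-\tilde v_1)=0$ of total weight $\rho:=1-\snorm y/N\ge 0$, using $-\tilde v_1\in\rdb N$ from the symmetry $\rdb N=-\rdb N$. The unsorted case follows by applying the same construction to the sorted permutation of $y$ and permuting back, since $\rdb N$ is $S_N$-invariant. I expect the middle-block casework for $\ics{N,j}$ and the ``consecutive-differences diagonalize the sorted $\tilde v_k$'s'' observation for $\idb N$ to be the two steps deserving the most care; everything else is bookkeeping around Lemma~\ref{lem:cr} and Rado's theorem.
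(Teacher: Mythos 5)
Your treatment of the three circumscribed cases and of $\tilde\ccs N$ is exactly the paper's (both rest on Lemma~\ref{lem:cr} and the identity $\circscr(R\cup R')=\circscr(R)\cap\circscr(R')$), and your Rado computation for $\ics{N,j}$ follows the paper's Case 3, with the helpful extra observation that the casework on $m$ deserves care (the paper glosses over the possibility that the global maximizer of $k\mapsto\sum_{l\le k}y_l$ falls outside the middle block $[j,N-j]$, though the conclusion still holds because in that regime the $L_1$ bound is implied by the $L_\infty$ bound). Where you genuinely diverge is on $\ics N$ and especially on $\idb N$. For $\ics N$ the paper gives a standalone greedy decomposition of any $y$ with $\norm y_1=2$ into a convex combination of the signed unit-pair vectors, whereas you obtain it by specializing $\ics{N,j}$ to $j=1$ and noting $\norm y_1\ge 2\norm y_\infty$ on $\R^N_0$; the paper does record this specialization but still proves Case 2 independently, so your route is leaner. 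For $\idb N$ the paper's reverse inclusion is a face-counting argument: it exhibits the $N(N-1)$ facets $F_{k,l}$ cut out by the hyperplanes $w_k-w_l=N$ and argues that every boundary point satisfies $\snorm w=N$. Your argument is an explicit primal certificate: sorting $y$ decreasingly, the sorted representatives $\tilde v_k$ have consecutive-difference vectors $N e_k$, so $y=\sum_k(d_k/N)\tilde v_k$ with $d_k=y_k-y_{k+1}\ge 0$ and total weight $\snorm y/N\le 1$, padded by a balanced pair $\tfrac12\rho\,\tilde v_1+\tfrac12\rho(-\tilde v_1)$; this directly exhibits $y$ as a convex combination of points of $\rdb N$ and sidesteps having to verify that the facet list is exhaustive. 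Both are correct, but your construction for $\idb N$ is more self-contained and would also be easier to turn into an algorithm for testing membership.
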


\begin{remark}
It is worth remarking that the polytopes $\ics N$, $\idb N$, $\ccs N$, and $\cdb N$ are  connected to the $A_{N-1}$ root lattices
and the dual lattices $A_{N-1}^*$. The polytopes $\idb N$ and $\ccs N$ are
(up to scaling) the Voronoi cells of the $A_{N-1}$ lattice. $\ics N$ is (again up to scaling) the unit ball of the dual norm to
the norm defining $\idb N$ and similarly for $\ccs N$.  It is easy to
check that the dual norm to the norm $y_{\text{max}}-y_{\text{min}}$ (in the space of mean zero vectors!) is one
half the standard $L_1$ norm (again in the space of mean zero
vectors). Finally  $\cdb N$  is the Voronoi cell of the dual
lattice $A_{N-1}^*$. We refer the interested reader to the text of
Conway and Sloane for details~\cite{CS}.

\end{remark}

Having derived these norm conditions, we can proceed to combine them
as outlined earlier in the section.

\begin{example}[An improved necessary condition.]
As previously discussed the $\ccs N$ polytope leads to the
necessary condition for synchronization
\[
y_{\max}-y_{\min} \leq 2 \tau_N,
\]
as originally derived by Chopra--Spong. Analogously the  $\cdb N$
polytope leads to a dual necessary condition for synchronization
\[
\max_{k \in \{1\ldots N-1\}} \max_{|S|=k}
                \frac{\sum_{l \in S}y_l}{k(N-k)}\leq 1.
\]
As discussed in the earlier example for $N=4$ these conditions reduce
to a rhombic dodecahedron of volume $256$ and a truncated octahedron
of volume $162 \sqrt{3} \approx 280.6$. One can trivially combine these
 conditions and obtain the improved necessary condition being that
{\bf both} of these conditions must hold. This gives a new, smaller
polyhedron containing the phase-locked region. Since this region is
defined by a collection of linear inequalities it is elementary,
although tedious, to compute the volume. A symbolic computation using
Mathematica gives the volume of the intersection of these figures as $(126 \sqrt{3} -210) \sqrt{7 - 4
  \sqrt{3}} + 3642 \sqrt{3} - 6074\approx 236.34, $ as compared with a
volume of approximately $210$ for the exact phase-locking region. The
resulting polytope is illustrated in Figure \ref{fig:Intersect}, and
takes the form of an octahedron whose edges have been chamfered and
whose vertices have been truncated. The resulting figure has 26 faces
(14 normals from $R^{\text{DB}}_3$ and 12 normals from $R^{\text{CS}}_3$: 12
rectangular faces from chamfering the edges, 8 hexagonal faces coming
from the original faces of the octahedron,and 6 octagonal faces from
truncating the vertices.  Similarly we also give the improved
sufficient condition for phase-locking
\[
\inf_z \left(\frac{z_{max}-z_{min}}{N} + \frac{\Vert y-z \Vert_1}{2 \tau_N}
\right)\leq 1.
\]
The polytope satisfying these conditions is shown in Figure
\ref{fig:Infimal}. It results from applying Conway's {\it kis}
operation to the rhombic dodecahedron -- raising a pyramid on each
rhombic face. We have not computed the volume analytically but
numerical integration gives the volume as $166.28$. Compare this with
$128$ for the rhombic dodecahedron and $\frac{135\sqrt{3}}{2}\approx
116.913$ for the cuboctahedron.

\end{example}

\section{Numerical simulations}

\subsection{Our method}

In this section,  we present some numerical results using Monte Carlo simulations on the relative sizes of the various inscribed and circumscribed regions defined in the previous sections of the paper.   For us to be able to do this, some of the elements of the Monte Carlo simulations had to be   specifically tailored to the problem at hand. We believe that this method is likely to be of independent interest, and so we present it in some detail.

Let us stress that one cannot expect to just use any ``naive'' method to sample any of our polytopes and get a reasonable result.  For example, we might think that we could just sample from a circumscribing hypercube or hypersphere and then use accept/reject (since we have explicit accept/reject criteria in Proposition~\ref{prop:norms}).  However, we are guaranteed to run into a ``curse of dimensionality'' for even moderate $N$ (q.v.~the difference of the volume bounds in Proposition~\ref{prop:volume} and those of circumscribing spheres or cubes).  As such, it is required that we find a method to efficiently sample at least one of the polytopes directly before we can make progress.

One of the main elements in the method below is the fact that the polytope $\ccs N$ is the image of a hypercube  under a projection map, so a uniform sample of the hypercube projects to a weighted sample of $\ccs N$ with known weighting.  Since we can sample the hypercube, we can then design a method to sample $\ccs N$. This is the basic idea, details below.

\begin{definition}\label{def:Q}
 We define $\proj$ as the orthogonal projection from $\R^N$ to the mean-zero subspace ${\mathbb R}_0^N$.  We denote by $\mc{Q}_N$ the standard (filled) hypercube $[-1,1]^N$, and then $\tncn$ is the (filled) hypercube $[-\tau_N,\tau_N]^N$.
\end{definition}

\begin{prop}[Pok\'{e} Method]
If $f\colon{\mathbb R}^N \rightarrow {\mathbb R}$ is any bounded function satisfying $f(\proj x) = f(x)$ (i.e. $f(x)$ is independent of the mean of  $x$),
then
\begin{equation}\label{eq:MCint}
\int_{\tncn} \frac{f(x)}{\sqrt{N} (2\tau_N - \snorm x)} dx = \int_{\omega \in \ccs N } f(\omega)\, d\omega,
\end{equation}
where $d\omega$ is the usual $(N-1)$-dimensional Lebesgue measure on ${\mathbb R}_0^N\cong {\mathbb R}^{N-1}$.  This allows for an explicit Monte Carlo sampler for any function supported on $\ics N$ as follows: if we sample the hypercube $\tncn$ a total of $\msamp$ times, then as $\msamp\to\infty$, we have
\begin{equation}\label{eq:MCsum}
  \frac 1 \msamp  \sum_{k=1}^\msamp \left[ \frac{(2\tau_N)^{N}}{\sqrt{N}}\times\frac{f(X_i)}{\left(2\tau_N - \snorm {X_i} \right)}\right] \xrightarrow[\msamp \to \infty]{} \int_{\omega \in  \ccs N} f(\omega) \, d\omega.
\end{equation}
 in the usual law of large numbers sense (in particular, this convergence is valid  almost surely).
 \label{prop:Sampling}
\end{prop}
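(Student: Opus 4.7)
The plan is to exploit the fact that $\tncn$ is a lifting of $\ccs N$: the image of the hypercube $\tncn$ under $\proj$ is exactly $\ccs N$, with a controllable fiber length. First I would establish this geometric identity. By Proposition~\ref{prop:norms}, $\ccs N = \{y \in \R^N_0 : \snorm y \le 2\tau_N\}$. Since $\snorm{\cdot}$ is invariant under translation by multiples of $\mathbf{1}$, any $x\in\tncn$ satisfies $\snorm{\proj x} = \snorm x \le 2\tau_N$, so $\proj x \in \ccs N$. Conversely, for $y\in\ccs N$ the fiber $\proj^{-1}(\{y\}) \cap \tncn$ consists of $x = y + c\mathbf 1$ with $-\tau_N \le y_i + c \le \tau_N$ for every $i$, which reduces to the interval $c \in [-\tau_N - y_{\min},\, \tau_N - y_{\max}]$ of length exactly $2\tau_N - \snorm y \ge 0$. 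So $\proj(\tncn) = \ccs N$ and every fiber has a computable length.

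Next I would orthogonally decompose $\R^N = \R^N_0 \oplus \R\mathbf 1$ and apply Fubini. Writing $x = y + c\mathbf 1$ with $y = \proj x$, the Jacobian gives $dx = \sqrt N \, dc\, dy$, the $\sqrt N$ arising from the Euclidean length of $\mathbf 1$. The hypothesis $f(\proj x) = f(x)$ together with the shift-invariance of $\snorm{\cdot}$ implies that the integrand $f(x)/[\sqrt N (2\tau_N - \snorm x)]$ depends only on $y$, so
\begin{equation*}
\int_{\tncn}\frac{f(x)}{\sqrt N(2\tau_N - \snorm x)}\, dx = \int_{\ccs N}\frac{f(y)}{\sqrt N(2\tau_N - \snorm y)}\left(\int_{-\tau_N - y_{\min}}^{\tau_N - y_{\max}}\sqrt N\, dc\right) dy = \int_{\ccs N} f(y)\, dy,
\end{equation*}
since the inner $c$-integral produces exactly $2\tau_N - \snorm y$, cancelling the weight in the denominator. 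The weight was designed precisely so that this cancellation occurs, yielding~\eqref{eq:MCint}.

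For the Monte Carlo claim~\eqref{eq:MCsum}, I would reinterpret the sum as a sample mean: if the $X_i$ are i.i.d.~uniform on $\tncn$ with density $(2\tau_N)^{-N}$, the identity just proved gives
\begin{equation*}
\E\left[\frac{(2\tau_N)^N}{\sqrt N}\cdot\frac{f(X_1)}{2\tau_N - \snorm{X_1}}\right] = \int_{\tncn}\frac{f(x)}{\sqrt N(2\tau_N - \snorm x)}\, dx = \int_{\ccs N}f(\omega)\, d\omega,
\end{equation*}
and the strong law of large numbers delivers the almost-sure convergence. The singularity of the integrand on $\{\snorm x = 2\tau_N\}$ is integrable, because applying the identity to $f\equiv 1$ gives $\int_{\tncn}(2\tau_N - \snorm x)^{-1}\, dx = \sqrt N\, \vol_{N-1}(\ccs N) < \infty$, and boundedness of $f$ dominates the sample.

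The main obstacle is the geometric identification $\proj(\tncn) = \ccs N$ together with the explicit fiber length $2\tau_N - \snorm y$; once that is in hand, the remaining steps are routine Fubini and SLLN, with the only bookkeeping detail being the $\sqrt N$ Jacobian factor that connects $(N-1)$-dimensional volume on $\R^N_0$ to $N$-dimensional Lebesgue measure on $\tncn$.
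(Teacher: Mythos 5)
Your proof is correct and follows essentially the same geometric approach as the paper: orthogonally decomposing $\R^N = \R^N_0 \oplus \R\mathbf{1}$, identifying $\proj(\tncn) = \ccs N$, computing the fiber length $2\tau_N - \snorm{y}$, and applying Fubini with the $\sqrt N$ Jacobian factor. The paper first proves the unweighted identity $\int_{\tncn} f(x)\,dx = \int_{\ccs N}\sqrt{N}(2\tau_N - \snorm\omega)f(\omega)\,d\omega$ and then substitutes $g(x) = f(x)/[\sqrt N (2\tau_N - \snorm x)]$ for $f$; you fold both steps into a single Fubini computation, which is cosmetically different but mathematically identical.

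The one place where you genuinely diverge is the integrability check needed for the SLLN. You obtain finiteness of the mean of the Monte Carlo summand by the elegant trick of substituting $f \equiv 1$ into the already-proved identity, giving $\int_{\tncn}(2\tau_N - \snorm x)^{-1}\,dx = \sqrt N \vol_{N-1}(\ccs N) < \infty$ and then dominating by boundedness of $f$. That is a cleaner route to finite mean. The paper instead computes the exact tail of $(2\tau_N - \snorm X)^{-1}$ using the fact that $\snorm Y$ for iid $U(0,1)$ entries is $\mathrm{Beta}(N-1,2)$, obtaining $\P\bigl((2\tau_N - \snorm X)^{-1} > K\bigr) \le C(N)K^{-2}$. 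The extra work buys the authors the observation that the tail decays only quadratically, so the estimator's variance is not expected to be finite and the Monte Carlo convergence in $\msamp$ may be slow --- information your argument does not deliver. Both are valid; yours is more economical for the stated proposition, the paper's is more informative about practical performance.
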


\begin{remark}
  Note that we can sample the left-hand side more or less explicitly.  The coordinates of $X_i$ are independent, so we can just choose $X_i^{(k)} \sim U[-\tau_N,\tau_N]$ and concatenate them to obtain $X_i$.  Moreover, given $X_i$ we can evaluate the summand inside of square brackets explicity; from this we just repeat $M$ times and take the mean.

We stress that this gives us a method to sample any region that is a subset of $\ccs N$. This includes all of the inscribed polyhedra and any of the circumscribed polyhedra that include $\rcs N$.

Also note that we could use this method to measure whatever weighted volume that we would like on the $\ccs N$ polytope if we wanted, although all we consider below are indicator functions of various other polytopes.

\end{remark}

\begin{proof}

The proof follows from a rotation and a partial integration. We first parameterize the hypercube $\tncn$ by its fiber representation over $\ics N$: specifically, for any $x\in \tncn$, we write
\begin{equation*}
  x = \omega +\frac{\sigma}{\sqrt{N}} (1,1,1,\ldots,1)^\intercal,  \mbox{ where $\omega = \proj x $ and $\sigma = \sum_i x_i$}.
\end{equation*}
For each $\omega\in\ccs N$, denote by $I_\omega$  the set of all $\sigma$ such that $\omega + \sigma/\sqrt{N} (1,1,1,\ldots,1)^\intercal  \in \tncn$.  Note that $I_\omega$ is a subinterval of the real line that is symmetric around zero.  Recall that $f(x) = f(\proj x=f(\omega)$, and note that $x\mapsto (\omega,\sigma)$ is an orthogonal transformation, and thus
\begin{equation*}
  \int_{\tncn} f(x) \,dx = \int_{I_\omega} \int_{\ccs N}   f(\omega)\,d\omega \,d\sigma  = \av{I_\omega} \int_{\ccs N}   f(\omega)\,d\omega.
\end{equation*}
Now it remains to compute $\av{I_\omega}$.
To see this note that membership in the cube is typically defined by the $2N$ inequalities
\begin{equation*}
 \omega_i + \frac{\sigma}{\sqrt{N}}  < \tau_N \qquad \forall i,\quad
 \omega_i + \frac{\sigma}{\sqrt{N}}  > -\tau_N \qquad \forall i,
\end{equation*}
but we can convert this to the necessary and sufficient conditions
\begin{equation*}
 \max_i \omega_i + \frac{\sigma}{\sqrt{N}}  < \tau_N,\quad
 \min_i \omega_i + \frac{\sigma}{\sqrt{N}}  > -\tau_N,
\end{equation*}
or equivalently
\[
-\tau_N - \min_i \omega_i    \leq \frac{\sigma}{\sqrt{N}} \leq \tau_N -  \max_i \omega_i
\]
and thus
\begin{equation*}
  \av{I_\omega}  =\left.\begin{cases} \sqrt{N}(2 \tau_N -\snorm\omega), &  2 \tau_N -\snorm\omega\ge 0,\\ 0, & 2 \tau_N -\snorm\omega <0.\end{cases}\right\}
\end{equation*}
Therefore (noting that the support of $\proj(\tncn)$ is exactly  $\ccs N$),
\begin{equation}\label{eq:intf}
\int_{\tncn} f(x) dx = \int_{\omega \in \ccs N } \sqrt N (2 \tau_N-\snorm\omega) f(\omega) d\omega
\end{equation}
for any function $f$ such that $f(\proj x) = f(x)$.  If we note that $\snorm x = \snorm {\proj x}$ and write
\begin{equation*}
   g(x) = \frac{f(x)}{\sqrt N (2 \tau_N-\snorm x)},
\end{equation*}
it follows that $g(\cdot)$ also has the property that $g(\proj x) = g(x)$.  Therefore, reusing~\eqref{eq:intf} with $f$ replaced by $g$ gives
\begin{equation*}
  \int_{\tncn} g(x) dx = \int_{\omega \in \ccs N } \sqrt N (2 \tau_N-\snorm\omega) g(\omega) d\omega,
\end{equation*}
or
\begin{equation*}
  \int_{\tncn}\frac{f(x)}{\sqrt N (2 \tau_N-\snorm x)}\,dx = \int_{\omega\in \ccs N} f(\omega)\,d\omega,
\end{equation*}
which is exactly~\ref{eq:MCint}.

Now we might think that~\eqref{eq:MCsum} follows directly from~\eqref{eq:MCint} (and it normally would) but we have to be a bit careful:  we need to show that the summand has finite mean to get the standard LLN convergence.    Here $N$ is fixed, so all of the prefactors involving $N$ won't matter, and we assumed above that $f$ is bounded.  The only challenge that remains is that we need to show that if $X$ is uniform in the hypercube $\tncn$, then
\begin{equation}\label{eq:finitemean}
  \E\left[\frac{1}{2\tau_N - \snorm X}\right] < \infty.
\end{equation}
Note that this random variable is essentially unbounded (it blows up at the boundary of $\ics N$) so we need to be careful.   So, some notation.  Let $Y^{(k)}$ be independent $U(0,1)$ random variables, and let $X^{(k)} = \tau_N(2Y^{(k)}-1)$, then $X^{(k)}$ are independent $U(-\tau_N,\tau_N)$ and the vector $X$ is a sample of $\tncn$.  Note that $\snorm{X} = 2\tau_N\snorm{Y}$ exactly.  Then we have
\begin{equation*}
  \P\left(\frac{1}{2\tau_N - \snorm{X}} > K\right) = \P\left(\snorm{Y} > 1-\frac{1}{2K\tau_N}\right)
\end{equation*}
by some basic algebra.

Finally note that $\snorm{Y}$ is the ``sample range'' of $N$ {independent and identically-distributed} $U(0,1)$, and it is well known~\cite[Chapter 3]{David2003OrderStats.book} that the distribution for $\snorm{Y}$ is $\mathrm{Beta}(N-1,2)$, and in particular
\begin{align*}
  \P\left(\snorm{Y} > 1-\frac{1}{2K\tau_N}\right) &= \int_{1-1/(2K\tau_N)}^1 N(N-1)x^{N-2}(1-x)\,dx \\
  &= 1-\frac{N(1-1/(2K\tau_N))^{N-1}}{2K\tau_N} - \left(1-\frac{1}{2K\tau_N}\right)^N\\
  &= \frac{N^2-N}{4\tau_N^2} K^{-2} + O(K^{-3}).
\end{align*}
In particular we have that there is a $C = C(N)$ such that
\begin{equation*}
  \P\left(\frac{1}{2\tau_N - \snorm{X}} > K\right) \le C(N) K^{-2}
\end{equation*}
and using Markov's Inequality this implies~\eqref{eq:finitemean}. Note that since we only decay at rate $K^{-2}$, we don't expect that this random variable has finite variance, and so the Monte Carlo method might converge slowly in $\msamp$.
\end{proof}

\subsection{Numerical Results: Circumscribed Polyhedra}

We will consider the case of circumscribed polyhedra first, as it is somewhat simpler to implement numerically. To begin with we note that all of the norms defined in Proposition~\ref{prop:norms} can be computed efficiently and are thus valid accept/reject observables. Extraction of the largest or smallest element of a list case be done in time $O(N)$, so the $\ell_\infty$ norm and spread norm $\snorm\omega$ can be computed in time $O(N)$, where $N$ is the dimension, as can the $\ell_1$ norm. The norm defining the circumscribed D\"orfler-Bullo region involves maximizing over subsets of different sizes
\[
\max_{k\in 1\ldots N} \max_{|S|=k} \frac{\sum_{l \in S}y_l}{k(N-k)}.
\]
Despite this combinatorial description this quantity can be computed in time $O(N \log(N))$. To see this note that of all subsets of cardinality $k$ it suffices to consider only the subset containing the $k$ largest elements. If one sorts the entries of $y$ ($N\log(N)$ via mergesort or similar) and then constructs the vector of partial sums of the sorted $y$ (time $O(N)$) then the functional above is the (weighted) maximum entry of the vector of partial sums, so this quantity can be computed in time $O(N\log(N))$.

We have performed some numerical experiments to compute the volumes of
the intersections of the various circumscribed polytopes in different
dimensions. The volumes of the polytopes $\ccs N$ and
$\cdb N$ were computed analytically using the formulae
derived earlier in the paper, while the volumes of the remaining
polytopes were computed using Monte-Carlo sampling with $\msamp=10^6$
points using the scheme outlined above. To briefly summarize we generate a sample $X$
of the cube $\tncn$, compute $\omega = \proj X$, the
projection of the point $x$
into the mean-zero subspace ${\mathbb R}_0^N$.  We then compute the
norm(s) of $\omega$ defining membership in the given polytope; the
point is counted with weight $1/(2 \tau_N - \snorm\omega)$ if it belongs to the polytope and is not
counted if it does not belong to the polytope. The results are given
in table \ref{table:cMC}: we give the volumes of the various
polytopes, while the quantity in brackets represents the fraction of
volume of the Chopra-Sprong polytope $\ccs N$, the previously best-known
necessary condition. One can see that $\av{\cdb N}/\av{\ccs N}$ tends to zero algebraically (as
we know rigorously from the analytic formulae and asymptotics) and
that the volumes of the other polytopes are comparatively smaller.

We have also used this sampling algorithm to compute a numerical approximation to the true volume of the stably phase locked region.
We did this using the well-known equation for the order parameter $r$,
\begin{equation}
r = \frac{1}{N} \sum_{i=1}^N \sqrt{1-\frac{\omega_i}{r^2}}.
\label{eqn:OP}
\end{equation}
Note that this equation holds in the mean-field scaling, and must be rescaled for the conventions used in this paper. Existence of a stably phase-locked solution is equivalent to the  existence of a root of  Equation (\ref{eqn:OP}). It follows from Jensen's inequality that $\sqrt{\frac{1}{N} \sum_i (1-\frac{\omega_i}{r^2}}) > \frac{1}{N} \sum_{i=1}^N \sqrt{1-\frac{\omega_i}{r^2}},$ so there can be no roots for $r>1$. The difference $r - \frac{1}{N} \sum_{i=1}^N \sqrt{1-\frac{\omega_i}{r^2}}$ is obviously positive for sufficiently large $r$, so there exists a stably phase-locked solution if this quantity is anywhere negative. The function is only defined for $r>\max_i \omega_i$ so we numerically assess the existence of a stably phase-locked fixed point by sampling the function $r - \frac{1}{N} \sum_{i=1}^N \sqrt{1-\frac{\omega_i}{r^2}}$ at twenty points in the interval $r \in [\max_i \omega_i, 1]$: if the minimum over these samples is negative then there necessarily exists a zero of the function and thus a stably phase-locked fixed point. This is, it should be said, more expensive computationally than assessing membership in the various circumscribed polyhedra but is still computationally tractable.

\begin{table}
\begin{center}
  \resizebox{\columnwidth}{!}{
  \begin{tabular}{ || c | c | c | c | c | c | c | | }
    \hline
        $N$
    & $\av{\ccs N}$
    & $ \av{\cdb N}$
    & $\av{\cap_{j=1}^{\lfloor\frac{N}{2}\rfloor}\ccs {N,j}}$
    & $ \av{\cdb N \cap \ccs N}$
    & $\av{\cdb N  \cap_{j=1}^{\lfloor\frac{N}{2}\rfloor} \ccs {N,j}}$
    & True Volume\\ \hline \hline

    5 & $5277.32$ & $4472.14$ [0.84] &$4660$ [0.88] & $4080$ [0.77]  & $3950$ [0.75] & 3210 [0.61]\\ \hline
    10 & $2.815\times10^{11}$  & $1.619\times10^{11}$ [0.58] &$1.56\times10^{11}$ [0.55]& $1.46\times10^{11}$ [0.52]
         &$1.16\times10^{11}$ [0.41] & $5.26\times10^{10}$ [0.19] \\ \hline

    15 & $2.93\times 10^{20}$ & $1.23\times 10^{20}$ [0.42]&$1.03\times10^{20}$ [0.35]&$1.11\times 10^{20}$ [0.38] & $ 7.01\times 10^{19}$[0.24]
         & $1.66\times 10^{19} $ [0.057] \\ \hline
    20 & $1.86\times 10^{30}$ & $6.15\times 10^{29}$ [0.33]& $4.37\times10^{29}$ [0.23] &$5.6\times 10^{29} $[0.30]
         & $2.74\times 10^{29}$ [0.14] & $3.3\times10^{28}$ [.017]
    \\ \hline

\end{tabular}}
\end{center}
\caption{Volumes of various circumscribing polyhedra for various $N$}
\label{table:cMC}
\end{table}

\begin{table}
\begin{center}
  \begin{tabular}{ || c | c | c | c | c | c | || }
    \hline\hline
    $N$ &  $\msamp$ & True Volume & $\inscr(\rcs N \cup \rdb N)$ & $\idb N$ & $\ics N$ \\ \hline
    5 & $10^6$ & 3210 & 2032 & 1398 & 962 \\ \hline
    10 & $1.5\times10^4$ & $5.26\times10^{10}$ & $6.7 \times 10^9$ & $3.2 \times 10^9$ & $7\times10^7$ \\ \hline
    15 & $500$ & $1.66\times 10^{19}$ & $2\times10^{17}$ & $1\times10^{17}$ & $8\times10^{12}$ \\ \hline

\end{tabular}
\end{center}
\caption{Volumes of various inscribing polyhedra for various $N$}
\label{table:iMC}
\end{table}

\subsection{Numerical Results: Inscribed Polyhedra}

The problem of the inscribed polyhedra is somewhat more difficult to
approach analytically, as it is not obvious how to compute the infimal convolution of two norms as defined in~\eqref{eq:defofRR'}
in a numerically efficient manner. The most obvious approach bypasses
the norms entirely --- if one begins with the vertices of the polytopes, denoted by
$\{{\bf v}_i\}_{i=1}^k$, then we have the problem of deciding if a given
vector ${\bf b}$ lies in the convex hull of these vertices. This can be rather straightforwardly recast as a
linear programming problem. The general linear programming problem is
to
\begin{align*}
  \text{Minimize}: \qquad & {\bf c}^T {\bm \alpha} & \\
  \text{subject to:} \qquad & {\bf A} {\bm \alpha} \leq {\bf b} & \\
  \text{and}& \qquad {\bm \alpha} \geq {\bf 0}. &
\end{align*}
Here the inequalities are understood to hold termwise. In our case we
can take the matrix ${\bf A}$ to be the $M \times k$ matrix having the
vertex vectors  $\{{\bf v}_i\}_{i=1}^k$ as columns, and ${\bf c}$ to
be the vector with all entries $1$. The solution to the linear program
gives the representation of ${\bf b} = \sum_{i=1}^k \alpha_i {\bf v}_i$
having  $\alpha_i\geq 0$ and $\sum_i \alpha_i$ a minimum. Obviously
${\bf b}$ lies in the convex hull of $\{{\bf v}_i\}_{i=1}^k$ if
$\sum_i \alpha_i\leq 1.$ This is, it must be said, a much more numerically challenging problem than the circumscribed problem, as
the number of vertices (and thus the time to solve the linear programming problem) grows exponentially with the dimension.  As such, in this case we could only take fewer samples, so we also report $\msamp$ as a function of $N$ as well.

\section{Phase-locking probabilities and Extreme Value Statistics}

A classical question in the Kuramoto literature is the following question:  If we sample $\omega_i$  from a fixed distribution, what is the probability of the system supporting a phase-locked solution?   Using the notation above, this is equivalent to the question of whether the vector $\omega\in \mathcal{D}_N$.  Of course, obtaining an exact probability is likely to be difficult --- as we have seen above, $\mathcal{D}_N$ has a complicated boundary that is difficult to describe in detail.  However, we can use some of the formulas developed above to obtain bounds of the probabilities, and we show that in certain scalings the  probability of phase-locking demonstrates phase-transition behavior.  Let us write the Kuramoto model as
\begin{equation}\label{eq:gN}
  \frac{d}{dt}\theta_i = \omega_i + \frac{\gamma_N}{N} \sum_{j=1}^N \sin(\theta_j-\theta_i).
\end{equation}

Note that we have changed the equation slightly --- earlier we have always chosen $\gamma_N \equiv K$ to be a constant, and by a choice of rescaling just set $K=1$, but let us now allow the coupling strength to vary as we change $N$. We will now assume throughout this section that the $\omega_i$ are independent and identically-distributed (iid) random variables, and denote the cumulative distribution function (cdf) of $\omega_i$ as $F(z)$, so that $F(z) := \P(\omega_i \le z)$.  When it exists, we will denote the probability distribution function (pdf) of $\omega_i$ as $f(z) := dF(z)/dz$.  We also assume implicitly below that the $\omega_i$ are not deterministic, i.e. there exists no number $x$ such that $\P(\omega_i = x) = 1$; but if they are then the $\omega_i$ are identical and a phase-locked solution exists trivially.

\begin{define}
Choose and fix a distribution $F(z)$.  Given a constant $\gamma_N$ in~\eqref{eq:gN}, let us define $\pfsync(\gamma_N)$ as the probability that~\eqref{eq:gN} supports a phase-locked solution with $\omega_i$ chosen iid with cdf $F(z)$.  We will typically drop the explicit dependence on $F$ and just write $\psync(\gamma_N)$ when the choice of $F$ is clear.
\end{define}

We can see {\em a priori} that $\psync(\cdot)$ should be a monotone nondecreasing function, as follows.  Let us assume without loss of generality that $\sum_i\omega_i = 0$ (for if not, move to the rotating frame).  Then increasing $\gamma_N$ is equivalent to dilating $\omega_i$ by a decreasing factor but holding $\gamma_N$ constant.  We have shown above that $\mathcal{D}_N$ has the property that dilating $\omega$ can only move it into the phase-locked domain, and not out of it.

The question we address here is how this function depends on $\gamma_N$, and in particular, if there is a natural scaling in which the probability of synchronization goes from zero to one.  It turns out that for a very broad class of distributions, the answer is yes.  As we have seen in the prequel, the boundary of the domain $\mc{D}_N$ is quite complicated, and so {\em a priori} it seems difficult to determine whether or not a random vector will lie in $\mc{D}_N$. However, we can use some of the characterizations of $\mc{D}_N$ above to prove a useful lemma.  Recall Definition~\ref{def:spread}.

\begin{lem}\label{lem:bounds}
For any $N$,
\begin{align*}
\{\omega\in\R^N_0\colon \snorm\omega < \gamma_N\} \subseteq &\mc D_N \subseteq  \{\omega\in\R^N_0\colon \snorm\omega< 2\tau_N\gamma_N/N\} \\&\subseteq  \{\omega\in\R^N_0\colon\snorm\omega< 2\gamma_N\}.
\end{align*}
\end{lem}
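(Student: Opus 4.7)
The plan is to reduce the statement for the rescaled equation \eqref{eq:gN} to the already-established norm characterizations in Proposition~\ref{prop:norms} for the original equation \eqref{eq:K}, and then obtain the outer inclusion from the elementary bound $\tau_N \le N-1$.

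First I would establish a scaling bijection between phase-locked configurations of \eqref{eq:gN} and those of \eqref{eq:K}. Writing a fixed point of \eqref{eq:gN} as $\omega_i = -(\gamma_N/N)\sum_j \sin(\theta_j-\theta_i)$ and multiplying by $N/\gamma_N$ shows that $\theta$ is a fixed point of \eqref{eq:gN} with frequency vector $\omega$ if and only if $\theta$ is a fixed point of \eqref{eq:K} with frequency vector $\widetilde\omega := (N/\gamma_N)\omega$. Since the Jacobian of \eqref{eq:gN} at $\theta$ is the positive multiple $(\gamma_N/N)$ of the Jacobian of \eqref{eq:K} at $\theta$, the stability criteria of Lemma~\ref{lem:stability} transfer: $\omega$ lies in the phase-locked region for \eqref{eq:gN} if and only if $\widetilde\omega \in \mc D_N$. (This is the content of the footnote-style remark just preceding the lemma that dilating $\omega$ is equivalent to changing $\gamma_N$.)

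Given that equivalence, the first two inclusions are immediate corollaries of Proposition~\ref{prop:norms}. For the inner inclusion, the D\"orfler--Bullo polytope satisfies $\idb N = \{y \in \R^N_0 : \snorm y \le N\} \subseteq \mc D_N$, so $\snorm\omega < \gamma_N$ implies $\snorm{\widetilde\omega} = (N/\gamma_N)\snorm\omega < N$, hence $\widetilde\omega \in \idb N \subseteq \mc D_N$ and so $\omega$ is in the phase-locked region for \eqref{eq:gN}. For the middle inclusion, $\mc D_N \subseteq \ccs N = \{y\in \R^N_0 : \snorm y \le 2\tau_N\}$, and since $\mc D_N$ is open while $\ccs N$ is closed with the equality $\snorm{\widetilde\omega} = 2\tau_N$ occurring only on the boundary (it is attained precisely on $\rcs N \subseteq \bdy\mc D_N$), membership of $\omega$ in the open phase-locked region forces the strict inequality $\snorm{\widetilde\omega} < 2\tau_N$, i.e.~$\snorm\omega < 2\tau_N\gamma_N/N$.

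Finally, the outermost inclusion is a direct consequence of the already noted bound $\tau_N \le N-1$: then $2\tau_N\gamma_N/N \le 2(N-1)\gamma_N/N < 2\gamma_N$, so any $\omega$ with $\snorm\omega < 2\tau_N\gamma_N/N$ satisfies $\snorm\omega < 2\gamma_N$. I do not anticipate any real obstacle; the only point that requires a moment of care is the strictness of the middle inclusion, which is handled by the openness of $\mc D_N$ noted above.
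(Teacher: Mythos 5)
Your proof is correct and is essentially the same argument the paper gives, just written out in more detail: both rescale $\omega\mapsto (N/\gamma_N)\omega$, invoke the norm characterizations from Proposition~\ref{prop:norms} for the first two inclusions, and use $\tau_N<N$ for the last. One small point in your favor: the paper's terse proof says the first inclusion follows from the characterization of $\ics N$, but since $\ics N$ is the $L_1$ ball and the inclusion is stated in the $\snorm{\cdot}$ norm, the correct polytope to cite is $\idb N$, which is what you use.
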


\begin{proof}
The first two inclusions follow from the characterization of $\ics N$  and $\ccs N$ in Proposition~\ref{prop:norms} plus some rescaling.
Using the fact that $\tau_N < N$ gives the last inclusion. We can also get the last inclusion directly by the argument
\begin{equation*}
\omega_i-\omega_j =\frac{ \gamma a_N }{N} \sum_{k=1}^N \sin(\theta_i-\theta_k) -\frac{ \gamma a_N }{N} \sum_{k=1}^N \sin(\theta_j-\theta_k)
\end{equation*}
so that $\av{\omega_i-\omega_j} \leq 2 \gamma a_N$.
\end{proof}

It is clear from the above that one quantity of interest will be $\snorm\omega$ when the components of $\omega$ are samples of a particular distribution.  This is known in statistics as the ``sample range'' or ``sample spread'' and is related to extreme value statistics, as we now describe.

\begin{define}
  Let $X_i$ be iid with cdf $F(\cdot)$, and define
\begin{equation*}
  M_N := \snorm{(X_1,\dots, X_N)} = \max_{i=1}^N X_i - \min_{i=1}^N X_i.
\end{equation*}
We say that $X_i$ (or, alternatively, $F(\cdot)$) is {\bf min-max concentrated (MMC)} if there exists a sequence $\xi_N$ such that $M_N/\xi_N\to1$ in probability; more explicitly, this means that for all $\epsilon >0$,
\begin{equation*}
  \lim_{N\to\infty} \P(\av{M_N/\xi_N - 1} > \epsilon) = 0.
\end{equation*}
We will also say that $X_i$ (or $F$) is {\bf MMC($\xi_N$)} for short.
\end{define}

From this we are able to state and prove the main theorem of this section.

\begin{thm}\label{thm:pt}
  Assume that $\omega_i$ are chosen iid with cdf $F(z)$, and that $F(z)$ is MMC($\xi_N)$.  Consider~\eqref{eq:gN} with coupling strength $\gamma_N$. Then
  \begin{equation*}
    \limsup_{N\to\infty} \frac{\gamma_N}{\xi_N} < \frac12 \implies \lim_{N\to\infty}\psync(\gamma_N)  = 0,
  \end{equation*}
  and
   \begin{equation*}
    \liminf_{N\to\infty} \frac{\gamma_N}{\xi_N} >1 \implies \lim_{N\to\infty}\psync(\gamma_N)  = 1.
  \end{equation*}
\end{thm}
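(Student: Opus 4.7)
The plan is to sandwich $\psync(\gamma_N)$ using Lemma~\ref{lem:bounds}, which in the current normalization yields
\begin{equation*}
  \{\omega\in\R^N_0 : \snorm\omega < \gamma_N\} \;\subseteq\; \mc D_N \;\subseteq\; \{\omega\in\R^N_0 : \snorm\omega < 2\gamma_N\}.
\end{equation*}
Because the spread seminorm is translation-invariant, $\snorm\omega = \snorm{\proj\omega} = M_N$, where $M_N$ is the sample range of the iid variables $\omega_1,\dots,\omega_N$ and $\proj$ is the projection to $\R^N_0$ that is implicit in the definition of $\mc D_N$. Passing to probabilities yields the two-sided estimate
\begin{equation*}
  \P(M_N < \gamma_N) \;\le\; \psync(\gamma_N) \;\le\; \P(M_N < 2\gamma_N),
\end{equation*}
and this reduces the theorem to concentration of $M_N/\xi_N$.

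Both implications now follow from the MMC hypothesis applied to these bounds. For the first, if $\limsup \gamma_N/\xi_N < 1/2$ then I can pick $\delta>0$ so that $2\gamma_N \le (1-2\delta)\xi_N$ for all $N$ large, giving
\begin{equation*}
  \psync(\gamma_N) \le \P(M_N/\xi_N < 1-2\delta) \le \P(\av{M_N/\xi_N - 1} > 2\delta) \to 0.
\end{equation*}
Conversely, if $\liminf \gamma_N/\xi_N > 1$ then I can pick $\delta>0$ so that $\gamma_N \ge (1+\delta)\xi_N$ for all $N$ large, giving
\begin{equation*}
  \psync(\gamma_N) \ge \P(M_N/\xi_N < 1+\delta) \ge 1 - \P(\av{M_N/\xi_N - 1} > \delta) \to 1.
\end{equation*}

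The argument is essentially a deterministic sandwich closed by a probabilistic concentration statement, so there is no real obstacle inside the proof itself; the ``hard parts'' sit outside it. The first is the geometric content already packaged into Lemma~\ref{lem:bounds}, and the observation that the same functional $\snorm{\cdot}$ controls both the inscribed and circumscribed sides, which is what makes a single concentration hypothesis (MMC) suffice. The second is the translation of $\limsup$ and $\liminf$ hypotheses into the strict ``for all large $N$'' bounds used above; this is routine but is the reason the factors $1/2$ and $1$ are strict. The gap between these two constants exactly reflects the gap between the $\ics N$ and $\ccs N$ bounds in Lemma~\ref{lem:bounds}, so narrowing it would require sharper geometry of $\mc D_N$ rather than any more sophisticated probability.
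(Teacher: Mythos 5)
Your proof is correct and follows exactly the paper's approach: sandwich $\psync(\gamma_N)$ between $\P(M_N < \gamma_N)$ and $\P(M_N < 2\gamma_N)$ using Lemma~\ref{lem:bounds}, then close with the MMC concentration hypothesis. You are slightly more explicit than the paper about the translation invariance of $\snorm{\cdot}$ (so that $\snorm\omega = \snorm{\proj\omega} = M_N$) and about converting the $\limsup/\liminf$ hypotheses into uniform bounds for large $N$, but the argument is the same.
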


\begin{remark}

A few points:

\begin{enumerate}
\item This theorem is saying that there is a phase transition as long as we choose the coupling strength to scale like $\xi_N$; in particular, choosing $\gamma_N = \kappa \xi_N$ with $\kappa>1$ guarantees phase-locking, and with $\kappa<1/2$ guarantees a lack of phase-locking.

\item Note that there is a gap of size $2$ in the statement; if, for example, $\gamma_N/\xi_N\to 3/4$ then we make no claim in this theorem (this gap of size 2 comes from the gap of size two in the previous lemma).

\item It is natural to question which distributions (if any) give rise to random variables that satisfy the assumptions of the theorem; we address this in the remainder of this section.

\end{enumerate}

\end{remark}

{\bf Proof of Theorem~\ref{thm:pt}.}
From Lemma~\ref{lem:bounds} we see that
\begin{equation*}
  \P(\omega\in\mathcal{D}_N) \le \P(M_N \le 2\gamma_N).
\end{equation*}
Let us assume that $\limsup \gamma_N/\xi_N = \kappa < 1/2$.  Then for $N$ sufficiently large, $2\gamma_N \le 2\kappa\xi_N$ and $2\kappa<1$.  Since $\omega_i$ are MCC($\xi_N$), $\P(M_N < 2\kappa\xi_N)\to 0$ which implies $\P(\omega\in\mc{D}_N) \to 0$.  Similarly, we also have
\begin{equation*}
   \P(\omega\in\mathcal{D}_N) \ge \P(M_N \le \gamma_N).
\end{equation*}
If we assume that $\liminf \gamma_N\xi_N  = \kappa > 1$, then for $N$ sufficiently large $\gamma_N \ge \kappa\xi_N$, and $\P(M_N < \kappa \xi_N) \to 1$.
\qed

The next natural question to ask is if there is a more concrete set of assumptions on the distribution $F(\cdot)$ that guarantee the behavior we require from the maximum and minimum of a sample.  This naturally leads into the question of extreme value statistics.

We follow~\cite[Chapter 3]{Coles.book},~\cite[Chapter 2]{Beirlant.book} in our analysis, but give some details here for completeness.  Let $X_i$ be iid with cdf $F(x)$, and define $Q_N = \displaystyle\max_{i=1}^N X_i$.   Then
\begin{equation*}
  \P(Q_N\le z) = \P\left(\max_{i=1}^N X_i \le z\right) = \P(X_i \le z,\ \forall i) = \prod_{i=1}^N \P(X_i\le z) = (F(z))^N =: F^N(z).
\end{equation*}
(Here we have used independence to change the $\forall$ to a product of probabilities.)  Now, if $X_i$ are not essentially bounded (i.e. there does not exist $x^* <\infty$ with $F(z^*) = P(X_i \le x^*) = 1$), then it is clear that $Q_N$ must diverge.  To see this, choose any finite $z$ above; since $F(z) < 1$ then $F^N(z)\to 0$ as $n\to\infty$.  As such, it makes sense to renormalize $Q_N$ in a linear fashion:  $Q_N^* = (Q_N - b_N)/a_N$, where $a_N,b_N$ are sequences that depend on $N$; from the argument above at least one, and perhaps both, of them must diverge as well. Then the famous Fisher--Tippett--Gnedenko Theorem~\cite[Theorem 3.1]{Coles.book} says that if there exists a distribution $G(z)$ such that
\begin{equation}\label{eq:evd}
  \P(Q_N^*\le z) = \P(Q_N \le a_N z + b_N) = (F(a_N z+b_N))^N\to G(z)\quad \mbox{(as $N\to\infty$)},
\end{equation}
then $G(z)$ is in one of three possible distributional families: the Gumbel, Fr\'echet--Pareto, or Weibull families.  These latter are called the {\bf extreme value distributions (EVDs)}, and if~\eqref{eq:evd} holds, we say that $F$ is in the {\bf basin of attraction} of the extreme value distribution $G$.

Moreover, notice that if we choose $Y_i = -X_i$, then $\min X_i = \max Y_i$, and so understanding the minimum of random variables with cdf $F(z)$ is the same as understanding the maximum of random variables with cdf $\widetilde{F}(z) := 1-F(-z)$, and so the theory is the same.

  (Speaking roughly, these three families correspond to the three cases of the random variable being normal-tailed, heavy-tailed, or compactly supported, respectively.  We will make a more precise statement below.)   We are interested here in those random variables that limit into the Gumbel families.  Moreover, notice that if we choose $Y_i = -X_i$, then $\min X_i = \max Y_i$, and so understanding the minimum of random variables with cdf $F(z)$ is the same as understanding the maximum of random variables with cdf $\widetilde{F}(z) := 1-F(-z)$, and so the theory is the same.

\begin{lem}
  Assume that $F(z)$ and $\tilde{F}(z) = 1-F(-z)$ both have the property that it limits onto an extreme value distribution as in~\eqref{eq:evd} with sequences $a_N,b_N$, $\widetilde a_N, \widetilde b_N$ with  $1 \ll a_N \ll b_N$ and $1\ll \widetilde{a}_N \ll \widetilde{b}_N$ as $N\to\infty$.  Then $X_i$ is MMC.
\end{lem}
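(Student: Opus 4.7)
The plan is to set $\xi_N = b_N + \widetilde b_N$ and show that $M_N/\xi_N \to 1$ in probability by decomposing $M_N$ into the max and the (negative of the) min, each of which is tightly concentrated around $b_N$ and $\widetilde b_N$ respectively under the hypothesis $a_N \ll b_N$ and $\widetilde a_N \ll \widetilde b_N$.

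First I would set up the notation. Write $Q_N = \max_i X_i$ and $P_N = \min_i X_i$, so that $M_N = Q_N - P_N$. Taking $Y_i := -X_i$, we have $\max_i Y_i = -P_N$, and the cdf of $Y_i$ is $\widetilde F$. The two hypotheses then say that the normalized random variables
\begin{equation*}
  R_N := \frac{Q_N - b_N}{a_N},\qquad \widetilde R_N := \frac{-P_N - \widetilde b_N}{\widetilde a_N}
\end{equation*}
each converge in distribution to a non-degenerate extreme value limit. In particular, both sequences are stochastically bounded: for any $\delta>0$ there is a $K=K(\delta)$ and an $N_0$ such that $\P(|R_N|>K)<\delta$ and $\P(|\widetilde R_N|>K)<\delta$ for all $N\ge N_0$.

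Next, I would rewrite $M_N$ in terms of $R_N,\widetilde R_N$:
\begin{equation*}
  M_N = Q_N + (-P_N) = (b_N + \widetilde b_N) + a_N R_N + \widetilde a_N \widetilde R_N.
\end{equation*}
Setting $\xi_N := b_N + \widetilde b_N$, this yields
\begin{equation*}
  \frac{M_N}{\xi_N} - 1 = \frac{a_N R_N + \widetilde a_N \widetilde R_N}{b_N + \widetilde b_N}.
\end{equation*}
Now the hypotheses $a_N \ll b_N$ and $\widetilde a_N \ll \widetilde b_N$ give the deterministic bound $a_N/\xi_N \le a_N/b_N \to 0$ and analogously $\widetilde a_N/\xi_N \to 0$. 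Fixing $\epsilon>0$ and choosing $K=K(\epsilon/3)$ as above, then choosing $N_1$ so large that $K(a_N+\widetilde a_N)/\xi_N < \epsilon$ for $N\ge N_1$, the event $\{|M_N/\xi_N -1|>\epsilon\}$ is contained in $\{|R_N|>K\}\cup\{|\widetilde R_N|>K\}$, which has probability at most $2\epsilon/3$. Hence $\P(|M_N/\xi_N -1|>\epsilon) \to 0$, so $M_N/\xi_N\to 1$ in probability and $X_i$ is MMC($\xi_N$).

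The proof is essentially a tightness argument, and the only subtlety is conceptual rather than technical: the hypothesis $a_N\ll b_N$ is precisely what lets us convert the non-degenerate distributional limit of $(Q_N-b_N)/a_N$ into the in-probability limit $Q_N/b_N\to 1$. Without this gap between scale and centering, the sample maximum would not concentrate at a single deterministic sequence and the spread $M_N$ would fluctuate on the same order as its mean, so no $\xi_N$ could exist. This is the one place where the proposed approach cannot be loosened.
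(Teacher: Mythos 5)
Your proof is correct and captures the same essential mechanism as the paper's: the hypothesis $a_N \ll b_N$ (and its twin $\widetilde a_N \ll \widetilde b_N$) is precisely what upgrades the distributional convergence of $(Q_N-b_N)/a_N$ into the in-probability concentration $Q_N/b_N\to 1$, and likewise for the minimum. Where you differ is only in packaging: you decompose $M_N = \xi_N + a_N R_N + \widetilde a_N \widetilde R_N$ directly and invoke a single tightness estimate, whereas the paper first proves $Q_N/b_N \to 1$ and $R_N/\widetilde b_N \to 1$ separately via two one-sided bounds (taking $\kappa < 1$ and $\kappa > 1$), then combines them with a union bound, and closes with a somewhat awkward case analysis on $\widetilde b_N / b_N$ that your explicit choice $\xi_N = b_N + \widetilde b_N$ sidesteps cleanly. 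One minor quantifier slip worth flagging: by hardwiring the tightness level to $\epsilon/3$, what you have literally shown is $\limsup_N \P\bigl(\av{M_N/\xi_N - 1} > \epsilon\bigr) \le 2\epsilon/3$, which is not yet the assertion that the limit vanishes for each fixed $\epsilon$. This is easily repaired either by observing monotonicity of $\P\bigl(\av{M_N/\xi_N - 1}>\epsilon\bigr)$ in $\epsilon$ and sending an auxiliary $\epsilon' \downarrow 0$, or, more cleanly, by taking the tightness threshold $\delta$ independent of $\epsilon$ (choose $K = K(\delta/2)$, pick $N$ large enough that $K(a_N + \widetilde a_N)/\xi_N < \epsilon$, conclude $\P < \delta$, and then let $\delta$ be arbitrary). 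The structure is otherwise sound and the closing remark correctly identifies the gap $a_N \ll b_N$ as the load-bearing hypothesis.
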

\begin{proof}
  Let us assume that~\eqref{eq:evd} holds for the maximum $Q_N$ with $1 \ll a_N \ll b_N$.    Choose $\kappa < 1$.  Note that the inequality $\kappa b_N \le a_N z + b_N$ is equivalent to $(\kappa-1) b_N/a_N \le z$.  Note that the assumptions imply that $(\kappa-1) b_N/a_N\to-\infty$, and from this we can find a sequence $z_N\to -\infty$ with
  \begin{equation*}
    \P(Q_N \le \kappa b_N ) \le \P(Q_N \le a_N z_N + b_N) \to G(z_N) \to 0.
  \end{equation*}
Similarly, if $\kappa > 1$, the inequality $\kappa b_N \ge a_N z + b_N$ is equivalent to $(\kappa-1) b_N/a_N \ge z$, and we can find a sequence $z_N \ge    (\kappa-1) b_N/a_N \to \infty$ and
\begin{equation*}
  \P(Q_N \ge \kappa b_N) \ge \P(Q_N \ge a_N z_N + b_N) \to 1-G(z_N) \to 0.
\end{equation*}
In particular, $Q_N/\kappa_N\to1$ in probability.  Clearly the same argument, {\em mutatis mutandis}, holds for the minimum $R_N$ with the parameters $ \widetilde a_N, \widetilde b_N$.  Note however that since $\widetilde{b}_N\gg 1$ by assumption, this implies that $\widetilde{b}_N\to-\infty$.

Now consider the difference $M_N = Q_N-R_N$, where we have that $Q_N/b_N, R_N/\widetilde b_N \to 1$ in probability.   We show that $(Q_N-R_N)/(b_N-\widetilde{b}_N)\to 1$ in probability.  To see this, let $\kappa>1$ and note that $Q_N-R_N > \kappa (b_N-\widetilde{b}_N)$ requires that either $Q_N > \kappa b_N$ or $R_N < \kappa \widetilde b_N$, and (recalling that $\widetilde b_n < 0$),
\begin{equation*}
  \P((Q_N-R_N)/(b_N-\widetilde{b}_N) > \kappa) \le \P(Q_N > \kappa b_N) + \P(R_N < \kappa\widetilde b_N) = \P(Q_N/b_N > \kappa) + \P(R_N/\widetilde b_N > \kappa) \to 0.
\end{equation*}

Since $\widetilde b_N \le b_N$, either ${\widetilde b_N}/b_N \to 0$ or $\limsup {\widetilde b_N}/b_N = \nu \in (-1,1)$.    In the former case, it's clear that $M_N / b_N \to 1$ in probability; in the latter, $M_N/((1-\nu) b_N) \to 1$ in probability.  (Note $\nu > 1$ unless $X_i$ are deterministic.)

\end{proof}

\begin{remark}\label{rem:symmetric}
Finally, one note:  if we assume that the distribution is symmetric, i.e. $F(-x) = 1-F(x)$, then of course we get the same statistics for both the min and the max, but in different directions.  One clever way to get this is break up $X_i = Y_i Z_i$ where $Z_i = \av{X_i}$ and $Y_i = \pm1$, each with probability $1/2$.  If we write $A = \{i:Y_i = 1\}$ and $B = \{i: Y_i=-1\}$, then
\begin{equation*}
  Q_N =  \max_{i\in A} Z_i, \quad R_N = -\max_{i\in B} Z_i.
\end{equation*}
As such, $Q_N$ and $-R_N$ clearly have the same distribution and thus if $Q_N$ is MMC($\xi_N$) then $M_N$ is MMC(2$\xi_N$).
\end{remark}

To summarize,  there are two required conditions:  first, we need that $F$ is in the basin of attraction of an extreme value distribution, but, more importantly, we need to ensure that $b_N \gg a_N$.  Finally, for any given distribution we would need to compute $b_N$ to understanding the correct coupling scaling in~\eqref{eq:gN}.

From here it remains to show that there are some interesting distributions that satisfy the conditions required.  It is also to ask which types of extreme value distributions show up under these conditions.  As it turns out, only the Gumbell can appear here.  For example, if the limiting EVD is Fr\'{e}chet, then we have $b_N = 0$ and $a_N\to\infty$, so the assumptions cannot be satisfied.  If the EVD is Weibull, then again $b_N$ is bounded, and $a_N\to b_N$.  So only the Gumbell will give examples where we obtain the scaling in Theorem~\ref{thm:pt}.  However, a large number of families of well-known distributions lie in the Gumbell class.

\begin{example}
  Let us consider a few examples:

{\bf Gaussian.}  Assume that the $\omega_i$ come from the unit normal distribution $\mathcal{N}(0,1)$.  Then~\cite{hall1979rate, zarfaty2020accurately} the probability in~\eqref{eq:evd} converges to an extreme value distribution with
\begin{equation*}
  a_N = \frac1{\sqrt{2\log(N)}},\quad b_N = \sqrt{2\log(N)} - \frac{\log(4\pi \log(N))}{2\sqrt{2\log(N)}},
\end{equation*}
and we see that
\begin{equation*}
  \frac{b_N}{a_N}  = 2\log(N) - \frac12 \log(4\pi \log(N)) \to \infty.
\end{equation*}

This ensures that we get the phase transition in Theorem~\ref{thm:pt} and we have an explicit expression for $b_N$.

{\bf Exponential and two-sided exponential.}  Let us first assume that $X_i$ is exponential with rate $\lambda > 0$, i.e. $F(z) = 1-\exp(-\lambda z)$ for $z\ge 0$.  Then $\P(Q_N \le z) = (1-\exp(-\lambda z))^N$.  We can first compute
\begin{equation*}
  \E[Q_N] = \int_0^\infty \P(Q_N \ge z)\,dz = \int_0^\infty (1-(1-\exp(-\lambda z))^N\,dz
\end{equation*}
Plugging in $u = 1-\exp(-\lambda z)$ gives us
\begin{equation*}
  \int_0^1 \frac{1-u^N}{\lambda(1-u)}\,du = \frac1\lambda \int_0^1 \sum_{k=0}^{N-1} u^k \,du = \frac1\lambda \sum_{k=1}^{N} \frac 1 k = :\lambda^{-1} H(N),
\end{equation*}
where $H(N)\sim \log(N)$ is the harmonic sum.  This suggests that we should choose $b_N = \log(N)/\lambda$ and $a_N = 1/\lambda$, and note then that
\begin{align*}
  \P((Q_N - b_N)/a_N \le z)  &= \P(Q_N \le a_N z + b_N) =  (1-\exp(-z - \log(N)))^N \\
  &= \left(1 - \frac{e^{-z}}{N}\right) ^N \to e^{-e^{-z}}.
\end{align*}
It is a commonly used fact for Markov chains~\cite{Norris.book} that $R_N = \min_{i=1}^{N} X_i$ has exponential distribution with rate $\lambda N$, and as such $R_N\to 0$ w.p.1 and certainly in probability.  Thus $Q_N$ and $M_N$ have the same distribution.

We can also consider the two sided exponential with distribution $F_2(z) = (1/2) (F(z) + (1-F(-z)))$, which we can again think of choosing a coin flip $Y_i$ for a sign $\pm1$, and $Z_i$ to be an exponential, and then $X_i = Y_iZ_i$.  Here we can choose $b_N = 2\log(N/2)/\lambda$ instead.

\end{example}

In fact, a quite general sufficient condition that guarantees Gumbel convergence is:  if there exists an auxiliary function $b(x)$ and
\begin{equation*}
  \lim_{x\to\infty} \frac{1-F(x+tq(x))}{1-F(x)} \to e^{-t},\mbox{ for all } t>0,
\end{equation*}
then $F$ limits into the Gumbel class.  (This is a part of full statement of the Fisher--Tippett--Gnedenko Theorem, see~\cite[Theorem 2.1]{Beirlant.book}.)  More concretely, if $F$ is in this basin of attraction, (see~\cite[Section 3.3.3]{Embrechts.book}) we can always define
\begin{equation*}
  q(t) := \frac{\int_t^\infty 1-F(s)\,ds}{1-F(t)},\quad b_N = F^{-1}(1-1/N),\quad a_N = q(b_N).
\end{equation*}
So, as long as $q(t)$ has the property that $\lim_{t\to\infty} q(t)/t = 0$, then this will apply.

\section{Conclusions}
We have given a unified treatment of necessary and sufficient
conditions on the frequency vector $\omega$ for the existence of a
stably phase-locked solution to the Kuramoto system. This construction
gives a necessary condition that is dual to the well-known
D\"{o}rfler--Bullo sufficient condition, and likewise a sufficient
condition dual to the Chopra--Spong necessary condition. Both of these
conditions are new, and the first is (for four or more oscillators) a
sharper condition than previously known conditions in the sense of the
$(N-1)$ dimensional Euclidean volume. Moreover we have shown how to
combine two norm estimates to get a new estimate that is better than
either one. This construction gives us further new conditions that
improve on those in the literature; the sufficient condition strictly contains
the D\"{o}rfler--Bullo sufficient condition and the necessary condition is strictly
contained in the Chopra--Spong necessary condition.

We also established a probabilistic phase-locking result for very
general distributions of natural frequencies.
We used the fact that the range semi-norm is equivalent to the
Kuramoto semi-norm, along with some known facts about the limiting
distribution of extreme value statistics to prove the following dichotomy:
for coupling strengths below a certain threshold complete
phase-locking occurs with probability zero, while above a (different)
threshold complete phase-locking occurs with probability one. This is
a substantial generalization of the results of Bronski, DeVille and
Park \cite{BDP}.

A few comments on possible generalizations and their nontriviality.  There are some natural generalizations that come to mind:  one could generalize the $\sin(\cdot)$ coupling in the paper to more general coupling functions (i.e.~generalize from the Kuramoto system to the Kuramoto--Sakaguchi or Kuramoto--Daido models).  It is also natural to ask what happens if we generalize to models on an arbitrary graph (i.e.~remove the ``all-to-all'' coupling and choose a more general coupling based on an underlying graph).  
Two of the main ingredients used above are {\bf symmetry} of the Jacobian of the vector field and {\bf convexity} of the set $\mathcal{D}_N$.    The symmetry of the Jacobian will hold generally only when the coupling function is odd (or, more specifically, when its derivative is even).  Of course, even in the Kuramoto--Sakaguchi model, there will be configurations $\theta$ for which the Jacobian is symmetric, but this will not hold in general.  The derivation of Lemma 2.2 (which is~\cite[Lemma 4.4]{BDP}) strongly uses symmetry of the Jacobian by  a Courant minimax argument.  We can no longer give a general ``if and only if'' condition when the Jacobian is no longer symmetric and there is no general method here.  One might then ask if all we require that the coupling function be odd, why restrict to $\sin(\cdot)$ but instead use a general odd function, e.g. something like a Fourier sine series with more than one frequency?  The issue here then arises that the stability domain is no longer required to be convex, and informal numerical results by the authors show that for some odd coupling functions the stability domain loses its convexity.  Finally, the question about networks is quite natural, but again we run into a challenge.  It is not clear how to show that for a general network coupling the domain $\mathcal{D}_N$ is convex, or (probably more likely) how to characterize which networks give a convex domain.  This is beyond the scope of this paper but there are clearly several interesting questions worthy of further study here.

\section{Acknowledgements}
JCB would like to acknowledge support under National Science Foundation grant NSF-DMS 1615418. TEC would like to acknowledge support from Caterpillar Fellowship Grant at Bradley University.  The authors would like to thank the anonymous referees whose comments and suggestions greatly improved the final version of this article.

%

\appendix

\section{Example of stability with negative entries}\label{sec:example-negative}

Here we give a few families of interesting examples of stable points, as promised in Remark~\ref{rem:ermentrout}.

Let us write $\nu_{ij} = \cos(\theta_i-\theta_j)$.  Then \cite[Theorem 3.1]{Ermentrout.1992} says that if
\be
\ii $\nu_{ij} \ge 0$ for all $i\neq j$;
\ii for any $k_0, k_m\in\{1,\dots, n\}$, there exists a path $k_0, k_1,\dots, k_m$ with $\nu_{k_{q-1},k_q} > 0$ for all $q=1,\dots,m$;
\ee
then the configuration is stable.    The theorem does not speak to what happens if some of the $\nu_{ij}$ are negative.  In contrast, our Lemma~\ref{lem:stability} gives a restriction only on the sum of the $\nu_{ij}$, which would allow for $\nu_{ij}$ of both signs.  Here we show that it is possible to obtain stable configurations with some $\nu_{ij}$ strictly negative.

As a concrete example, let us consider the following one parameter family $\theta_\alpha = \{-\alpha,0,\alpha\}$.  We claim that for $\pi/4 < \alpha < 0.297916 \pi$, the assumption of Ermentrout's Theorem is false, but the assumptions in our Lemma are true, and in fact the Jacobian is stable.  Note here that we have
\begin{equation*}
  \nu_{11}(\alpha)=\nu_{22}(\alpha)=\nu_{33}(\alpha) = 1,\quad \nu_{12} (\alpha)= \nu_{21}(\alpha) =\nu_{23}(\alpha)=\nu_{32} (\alpha)= \cos\alpha,\quad \nu_{13}(\alpha)=\nu_{31}(\alpha) = \cos2\alpha.
\end{equation*}
It is easy to see that $\nu_{13}(\alpha),\nu_{31}(\alpha)$ become negative as $\alpha$ passes through $\pi/4$.  However, we have
\begin{equation*}
  \kappa_1(\alpha) = \kappa_3(\alpha) = 1+ \cos\alpha+\cos2\alpha,\quad \kappa_2(\alpha) = 1+2\cos\alpha,\quad \tau(\alpha) = \frac{2}{\cos \alpha+\cos 2 \alpha +1}+\frac{1}{2 \cos \alpha +1}
  \end{equation*}
Now, to solve the equation $\tau(\alpha) =2$, we first use the trig identity $\cos2\alpha = 2\cos^2\alpha-1$.  Writing $\beta = \cos\alpha$ and some manipulation gives the quadratic $4\beta^2 + \beta - 2-0$ with roots $\beta_\pm = (1/8)(1\pm\sqrt{33})$.  Noting that both roots are in $[-1,1]$, gives four roots at $\alpha_\pm = \cos^{-1}((1/8)(1\pm\sqrt{33})$ and $2\pi-\alpha_\pm$.  Noting that $\tau$ is even (indeed, the original problem is clearly invariant under $\alpha\mapsto\pm\alpha$) means that we need to look for the first positive root, and therefore we are looking for
\begin{equation*}
  \alpha_+ = \cos^{-1} \left(\frac{-1+\sqrt{33}}8\right)\approx 0.935929 = 0.297916\pi.
\end{equation*}
It is straightforward to check that $\kappa_i(\alpha)$ are all decreasing on $(0,\alpha_+)$ and
\begin{equation*}
  \kappa_1(\alpha_+) = \kappa_3(\alpha_+) = \frac{15+\sqrt{33}}{16},\quad \kappa_2(\alpha_+) = \frac{3+\sqrt{33}}4,
\end{equation*}
which are both positive.  Therefore the configuration is stable for all $\alpha \in [0,\alpha_+)$, but some of the $\nu$ will be negative for ${\pi/4} < \alpha < \alpha_+$; specifically, we see that $\nu_{13}(\alpha_+) \approx -0.296535$.  We note in passing here that this family of solutions is related to the Chopra--Sprong points $\rcs3$ from earlier; in fact plugging in the point $(\alpha_+,0,-\alpha_+)$ into the nonlinearity on the right-hand side of the Kuramoto equation gives one of the Chopra--Spong points exactly (we can obtain the other five by taking negatives and permutations).

\section{Proof of Proposition~\ref{prop:volume}}\label{sec:Postnikov}
In this section we outline the derivation of the volume of the
polytope $P(R_n^{\text{CS}})$. We will compute the volume of the
unit permutathedron whose vertices are all permutations of
$(1,0\ldots,0,-1)$ -- the volume of $P(R_n^{\text{CS}})$ will obviously be
$\tau_N^{N-1}$ times the volume of the unit permutahedron. Postnikov\cite{Postnikov.2009}
has given several formulae for the $(N-1)$-volume of a permutahedron-- the
polyhedron whose vertices are given by all permutations of
$(x_1,x_2,\ldots,x_{N})$ which clearly lies in the $N-1$ dimensional
hyperplane in ${\mathbb R}^{N}$ given by $\sum x_i=\text{constant}.$
Of these formulae perhaps the most straightforward to apply in this instance
is Theorem 3.2, which expresses the volume of the permutahedron
$P_n(x_1,x_2,\ldots,x_N)$ as a polynomial in $x_i:$
\begin{equation}
  \vol(P_N) = \sum (-1)^{|I_{c_1 c_2\ldots c_N}|} D_N(I_{c_1 c_2
    \ldots c_N} ) \frac{x_1^{c_1}}{c_1!} \frac{x_2^{c_2}}{c_2!}\ldots \frac{x_N^{c_N}}{c_N!}.
\label{eqn:PostnikovMagic}
\end{equation}
Here the sum is over all sequences of non-negative integers $c_i$ such
that $\sum c_i= N-1$, $I$ is a certain set of integers $i \in
\{1,2,\ldots N-1\}$ to be defined shortly, and $D_N(I)$ is the number
of permutations with descent
set $I$. In particular given a sequence $\{c_i\}_{i=1}^N$ with
$\sum c_i=N-1$ one first defines a sequence $\epsilon \in
\{-1,1\}^{2(N-1)}$ by the following rule: each $c_i$ in the original
sequence contributes to $\epsilon$ $c_i$ ``1'''s followed by a single
$-1$, with the last $-1$ being deleted. For instance
$(c_1,c_2,c_3,c_4) = (2,0,0,1)$ gives $\epsilon =
(1,1,-1,-1,-1,1)$. The set $I_{c_1 c_2\ldots c_N}$ is defined to be
$\{i \in \{1,2\ldots,N-1\}| \sum_{j=1}^{2i-1} \epsilon_j
<0\}$. Finally $D_N(I)$ is defined to be the number of permutations
having descent set $I$, where the descent set of a permutation $\sigma
\in S_n$ is $\{i \in \{1,2\ldots N-1| \sigma(i) > \sigma(i+1)\}$.

Formula (\ref{eqn:PostnikovMagic}) is particularly nice in our case
since we have $x_1=1,x_N=-1$ and the remaining $x_j=0$. Thus the only
terms that contribute are those where $c_1=j$ and $c_N=(N-1-j)$, with
the remaining $c_k=0$, for $j \in \{0,1,\ldots,N\}$. The sequence
$\epsilon$ consists of $j$ 1's, followed by $(N-1)$ $-1$'s, followed by
$(N-j-1)$ $1$'s. The set $I_{j 0 0 \ldots 0 (N-j-1)}$ is clearly
$\{(j+1),(j+2),\ldots(N-1)\}$. Next we need to count the number of
permutations of $\{1\ldots N\}$ that have descent set $\{(j+1),(j+2),\ldots(N-1)\}$ --- in
other words permutations that are increasing up to $(j+1)$ and
decreasing after.  It is not hard to see that there are
${{N-1}\choose{j}}$. To see this note that the largest element, $n$,
must occur at position $j+1$. One can choose $j$ elements from
$\{1\ldots N-1\}$ to occur in the first $j$ positions. They must, of
course, be in increasing order with the remaining $N-1-j$ elements in
the last $N-j-1$ positions in decreasing order.    This gives
\begin{align}
  \vol(P_N(1,0,\ldots,0,-1))& = \sum_{j=0}^{N-1} \frac{(N-1)!}{j! j!
    (N-1-j)!(N-1-j)!}& \\
  & = \frac{1}{(N-1)!} \sum_{j=0}^{N-1}
    {{N-1}\choose{j}}^2& \\
    & = \frac{1}{(N-1)!} {{2(N-1)}\choose{N-1}} &,
  \end{align}
 where the last line follows from the well-known combinatorial
 identity $\sum_{j=0}^N{{N}\choose{j}}^2 = {{2N}\choose{N}}$.

There is a minor additional multiplicative factor to consider: the
Postnikov result is normalized so that the volume of a fundamental
cell of the lattice is one (equivalently it is the volume of the
projection of the polytope $p(P_n)$, where $p:(t_1,t_2,\ldots,t_N)
\mapsto (t_1,t_2,\ldots,t_{N-1}$). The generators of the lattice are
$v_1 = (1,0,\ldots,0,-1), v_2 = (0,1,0\ldots 0,-1), \ldots , v_{N-1}=
(0,\ldots,0,1,-1)$. The usual Euclidean $(N-1)$-volume of the fundamental cell
is given by $\sqrt{\det(G)}$ where $G$ is the Gram matrix $G_{ij}=v_i
\cdot v_j$. It is easy to see that, given the generators above the
Gram matrix  is
\[
G_{ij} = \begin{cases} 2, & i=j\\
1& i\neq j\end{cases}, 
\]
where $i,j$ range over $1,\dots, N-1$. It is also easy to see that $\det(G)=N$ (The eigenvalues are $1$, with
multiplicity $(N-2)$ and $N$ with multiplicity $1$). Thus it follows
that
\[
\vol(P_{R_n^{\text{CS}}}) = \frac{\sqrt{N}}{(N-1)!} {{2(N-1)}\choose{(N-1)}} \tau_N^{N-1}.
\]

\section{Proof of Proposition~\ref{prop:norms}}

\begin{proof}

Throughout the proofs when considering the points $\rcs N$ (resp.~$\rcs{N,j}$) it will be more convenient to scale out the
factor of $\tau_N$ (resp.~$\tau_{N,j}$) and work with vectors with integer entries.  Also recall Notation~\ref{not:abc} for the arguments below.

\fbox{{\bf  Case 1:} $\idb N$.}  Recall the definition of $\rdb N$:
\begin{equation*}
  \rdb N = \bigcup_{k=1}^N \Sym\left(\left((N-j)^{(j)}, (-j)^{(N-j)}\right)^t\right)
\end{equation*}
 It is clear that $\max_{i,j} (v_i - v_j) = N$ for any vector in $\rdb N$. From the
triangle  inequality it is clear that for any convex linear
combination of such vectors $w$ we have that  $\max_{i,j} (w_i -
w_j) \leq N.$ So the polytope includes the ball $\max_{i,j} (w_i -
w_j) \leq N.$ If we show that every point on the boundary of the polytope has $\max_{i,j} (w_i -
w_j) = N$, then it follows that the polytope given by convex
combinations of vertices  is exactly the ball of
radius $N$: $\max_{i,j} (w_i -
w_j) \leq N$ .

The polytope with vertices $R^{\text{DB}}_N $ has $N(N-1)$ faces $F_{k,l}$ with $1\le
k\neq l\le N$, which we describe now. Choose $k,l$, and define
$\tilde R$ to be the subset of vectors in $R^{\text{DB}}_N$ such that
the $k^{th}$ component is positive and the  $l^{th}$ component is negative.
It is easy to see that if $w$ is any convex combination of the
vectors in $\tilde R$ then
\begin{itemize}
\item Component $k$ is the largest positive component (possibly not
  unique.)
\item Component $l$ is the most negative component (possibly not
  unique.)
\item $\max_{ij} (w_i-w_j) = N.$
\end{itemize}
To see that this is a face note that all of the vectors in $\tilde R$,
and thus any convex combination of them, lie in the plane $w
\cdot (e_k - e_l)=N$, and that adding any positive multiple of
the vector $e_k - e_l $ to $w$ results in a vector
that has $\max_{ij} (w_i - w_j) > N$ and thus is not in the polytope.
(Alternatively one can also use the fact that $(e_k - e_l)$ is the normal
to the face together with Lemma~\ref{lem:Normals}.)

\fbox{{\bf  Case 2:} $\ics N$.}  We show that the set of all convex  combinations of permutations of the
vector $(1^{(1)}, (-1)^{(1)}, 0^{N-2})^t$ is contained in and contains the  $L_1$ unit
ball $\{y \in \R^N_0 : \norm y_1  \leq
2 \} $. The result then follows from scaling.

One direction is easy: the vectors all have $L_1$ norm equal to $2$, and thus any convex combination will have $L_1$ norm less than or equal to $2$, so the polytope is contained in the $L_1$ ball of radius $2$.

To see the other direction we give an explicit ``greedy''  algorithm to decompose
any vector in the ball of radius $2$ into a convex combination of vectors of
the given form. We can assume without loss of generality that the
given vector has $L_1$ norm equal to $2$. Given a vector $v$ with
$\Vert v\Vert_1=\beta $ we define $i$ to be the component of $v$ with
the smallest non-zero magnitude. (If there are multiple such
components any one can be chosen). Let $j$ be any component with the
opposite sign of component $i$. Consider the new vector $v \pm
|v_i| (e_i- e_j),$ where the sign is chosen so that
the $i^{th}$ components cancel.  It is easy to see that this
operation has the following properties.
\begin{itemize}
\item It zeroes out component $i$.
\item It decreases the magnitude of component $j$. Component $j$ may be
  zero but it cannot change sign.
\item It decreases the $L_1$ norm by exactly $2|v_i|$.
\item It leaves the remaining components unchanged.
\end{itemize}
It is easy to see that this algorithm terminates in at most $N-1$
steps. Since the initial $L_1$ norm is $2$ and the decrease in the
$L_1$ norm at each step is twice the coefficient the coefficients sum
to $1$. Thus every vector with $L_1$ length 2 is expressible as a
convex combination of the basis vectors and lies in the closed
polytope. Since the polytope is contained in and contains the $L_1$
ball of radius $2$ the two must be the same.

\fbox{{\bf  Case 3:} $\ics{N,j}$.} This follows more or less directly from Rado's theorem.
In our case $v$ is the vector $(1^{(j)}, 0^{(N-2j)}, (-1)^{(j)})^t$ and the permutahedron is given by the set of vectors $y$ which satisfy
the following set of inequalities.
\begin{align*}
&y_1 \leq 1& \\
&y_1 + y_2 \leq 2 & \\
&y_1 + y_2 + y_3 \leq 3 &\\
& \vdots& \\
&y_1 + y_2 + \ldots + y_j \leq j&\\
&y_1 + y_2 + \ldots + y_{j+1} \leq  j&\\
&y_1 + y_2 + \ldots + y_{j+2} \leq  j&\\
& \vdots& \\
&y_1 + y_2 + \ldots + y_{N-j} \leq j&\\
&y_1 + y_2 + \ldots + y_{N-j+1} \leq  j-1&\\
& \vdots& \\
& y_1 + y_2 + \ldots + y_{N-2} \leq  2&\\
& y_1 + y_2 + \ldots + y_{N-1} \leq  1&
\end{align*}
The first inequality implies that the largest entry is less than or equal to
$1$. The last inequality (together with the condition that $\sum y_i=0$ )
implies that $-y_N\leq 1$. These together imply that $\Vert y\Vert_{\infty}\leq 1$. Next we note that $2(j-1)$ of these inequalities are redundant: given
that the first inequality holds it follows from the ordering of the $y_i$
that the second through the $j^{th}$ must also hold. The remaining $N-2j$
inequalities are of the same form,
\[
\sum_{i=1}^k y_i \leq j \qquad\qquad k \in (j+1, N-j-1).
\]
The sum $\sum_{i=1}^k y_i$ is going to be maximized by some $k^*$ (possibly non-unique) and the inequalities  hold if and only if the inequality holds for this value of $k$. The sum is obviously maximized when the summation
contains all of the positive terms and none of the negative terms (and the
disposition of any zero terms does not matter). Since the terms $y_i$ sum
to zero the sum of the positive terms is equal to minus the sum of the
negative terms, and thus each is equal to $\frac{1}{2} \sum_i |y_i|$.
Thus all of the inequalities above can be reduced to two conditions $\norm y_\infty \le 1$ and $\norm y_1 \le 2$.
Thus the polytope is defined by the condition $ \max\left(2\norm y_\infty,{j}^{-1}\norm y_1\right) \leq 2$.

The case $j=1$ reduces nicely:  since we are working on mean zero space $\sum_i y_i =0 $ we have that $\norm y_1 \ge 2\norm y_\infty,$ recovering the previous formula.

\fbox{{\bf  Case 4:} $\cdb N$.}
In this case the normals to the faces of the polytope are given by all
permutations of all vectors of the form $ (i^{(j)}, (-j)^{(i)})^t$ for $i \in
\{1\ldots N-1\}$ and $j=N-i$. (Recall Notation~\ref{not:abc}.)
 From the rearrangement inequality we
can assume that that the entries of $y$ and the entries of the normal
vector are both arranged in decreasing order. Then  Lemma
\ref{lem:Normals} implies
\[
 \max_{i} \frac{j \sum_{l=1}^i y_l - i \sum_{l=i+1}^N y_l}{i j N } \leq 1
\]
Note that the vector $y$ has mean zero and thus $\sum_{l=i+1}^N y_l
=-\sum_{l=1}^i y_l$. Thus we have
\begin{align*}
& \max_{i} \frac{j \sum_{l=1}^j y_l - i \sum_{l=k+1}^N y_l}{i j N} \leq 1& \\
&\max_i \frac{(i+j) \sum_{l=1}^k y_l}{i j N} \leq 1& \\
&\max_i \frac{ \sum_{l=1}^i y_l}{i j } \leq 1&
\end{align*}

\fbox{{\bf Case 5:} $\ccs N$.}
In this case the normal vectors to the faces of the polytope are given
by all permutations of $\tau_N (1^{(1)}, (-1)^{(1)}, 0^{N-2})^t$, which can be written as $\tau_N (e_i - e_j)$, where $e_i$ is the unit vector in the $i^{th}$ corrdinate direction.
 From Lemma \ref{lem:Normals} it follows that the polytope is defined by $\max_{i,j} \frac{\tau_N (e_i - e_j) \cdot y}{\tau_N(e_i - e_j) \cdot \tau_N (e_i - e_j) } = \max_{i,j} \frac{\tau_N(y_i-y_j)}{2 \tau_N^2} \leq 1$, or $y_{\max} - y_{\min} \leq 2 \tau_N$.

\fbox{{\bf Case 6:} $\ccs{N,j}$.} This case is similar to the above and the result again follows  from Lemma~\ref{lem:Normals}.  The normals are given by all permutations of $\tau_{N,j} (1^{(j)},(-1)^{(j)},0^{(N-2j)}).$ The squared Euclidean length of any normal is $\Vert x \Vert^2= 2 j \tau_{N,j}^2. $ It is clear that the quotient $\frac{\langle y,x\rangle}{\langle x,x\rangle}$ is maximized
when the $j$ components of $x$ equal to $+1$ correspond to the $j$ largest components of $y$, and likewise the $j$ components of $x$ equal to $-1$ correspond to the
$j$ smallest components of $y$. This gives the condition
\[
\frac{\sum_{i=1}^j y_{\text{max},i} - \sum_{i=1}^j y_{\text{min},i}}{2 j \tau_{N,j}} \leq 1.
\]
The intersection of these inequalities is obviously given by
\[
\max_j \frac{\sum_{i=1}^j y_{\text{max},i} - \sum_{i=1}^j y_{\text{min},i}}{2 j \tau_{N,j}} \leq 1.
\]

\end{proof}

\end{document}